\colorlet{lightlightgray}{gray!20}
\colorlet{light1.5gray}{gray!35}
\newtheorem{theorem}{Theorem}[section]
\newtheorem*{theorem*}{Theorem}
\newtheorem{maintheorem}{Theorem}
\newtheorem{maincorollary}[maintheorem]{Corollary}
\newtheorem*{maintheorem*}{Theorem}
\newtheorem{lemma}[theorem]{Lemma}
\newtheorem{corollary}[theorem]{Corollary}
\newtheorem{proposition}[theorem]{Proposition}
\newtheorem*{question*}{Question}
\theoremstyle{remark}
\newtheorem{remark}[theorem]{Remark}
\newtheorem*{remark*}{Remark}
\theoremstyle{definition}
\newtheorem{definition}[theorem]{Definition}
\newtheorem{claim}{Claim}
\setlist[1]{labelindent=\parindent, leftmargin=*}
\DeclareMathOperator{\GL}{GL}
\DeclareMathOperator{\SL}{SL}
\DeclareMathOperator{\Aut}{Aut}
\DeclareMathOperator{\End}{End}
\DeclareMathOperator{\Cent}{Cent}
\DeclareMathOperator{\Bir}{Bir}
\DeclareMathOperator{\id}{id}
\DeclareMathOperator{\Spec}{Spec}
\DeclareMathOperator{\Hom}{Hom}
\DeclareMathOperator{\rank}{rank}
\DeclareMathOperator{\Span}{Span}
\DeclareMathOperator{\Mor}{Mor}
\DeclareMathOperator{\grp}{grp}
\DeclareMathOperator{\dom}{dom}
\DeclareMathOperator{\A}{R}
\DeclareMathOperator{\Repl}{Repl}
\DeclareMathOperator{\rRepl}{rRepl}
\newcommand{\GG}{\mathbb{G}}
\newcommand{\RR}{\mathbb{R}}
\newcommand{\CC}{\mathbb{C}}
\newcommand{\QQ}{\mathbb{Q}}
\newcommand{\PP}{\mathbb{P}}
\newcommand{\ZZ}{\mathbb{Z}}
\newcommand{\NN}{\mathbb{N}}
\renewcommand{\AA}{\mathbb{A}}
\newcommand{\kk}{\textbf{k}}
\newcommand{\aquot}{/ \! \! /}
\newcommand{\name}[1]{#1}
\renewcommand{\phi}{\varphi}
\newcommand{\set}[2]{\left\{\,#1 \ | \ #2\,\right\}}
\newcommand{\Bigset}[2]{\left\{\,#1 \ \Big| \ #2\,\right\}}
\newcommand{\sprod}[2]{\langle #1, #2 \rangle}
\title[Maximal commutative unipotent subgroups]
{Maximal commutative unipotent subgroups and a characterization of affine spherical varieties}
\author[A. Regeta \and I. van Santen]
{Andriy Regeta \and Immanuel van Santen}
\thanks{}
\address{\noindent Institut f\"{u}r Mathematik, Friedrich-Schiller-Universit\"{a}t Jena, \newline
	\indent  Jena 07737, Germany}
\email{andriyregeta@gmail.com}
\address{Departement Mathematik und Informatik, 
	Universit\"at Basel,\newline
	\indent Spiegelgasse 1, CH-4051 Basel, Switzerland}
\email{immanuel.van.santen@math.ch}
\begin{document}
	
%	\subjclass[2020]{Primary: 14R20, 14J50, 22F50; Secondary: 14M25, 14M27}

\setcounter{tocdepth}{1}

\begin{abstract}
%	We describe maximal commutative unipotent subgroups of the automorphism 
%	group $\Aut(X)$ of an irreducible affine variety $X$. Further we show for irreducible affine varieties $X$ and $Y$, 
%	that a group isomorphism $\Aut(X) \to \Aut(Y)$ maps unipotent elements to unipotent elements.
%	
%	Finally, we show that an affine toric variety different from an algebraic torus 
%	is determined by its automorphism group in the category of
%	normal irreducible affine varieties and we show that a smooth affine
%	spherical variety different from an algebraic torus is determined  by its automorphism group
%	(up to base-change with an automorphism
%	of the ground field) in the category
%	of smooth irreducible affine varieties.

We describe maximal commutative unipotent subgroups of the automorphism group $\mathrm{Aut}(X)$ of an irreducible affine variety $X$. Further we show that
a group isomorphism $\mathrm{Aut}(X) \to \mathrm{Aut}(Y)$ maps unipotent elements to unipotent elements,
where $Y$ is irreducible and affine.
Using this result, we show that the automorphism group detects sphericity and the weight-monoid.

As an application, we show that an affine toric variety different from an algebraic torus is determined by its automorphism group among normal irreducible affine varieties and we show that a smooth affine spherical variety different from an algebraic torus is determined by its automorphism group 
(up to an automorphism of the base field) among smooth irreducible affine varieties.
This generalizes results obtained by Cantat, Kraft, Liendo, Urech, Xie and the authors.
\end{abstract}

\maketitle
	
\tableofcontents	

\section{Introduction}

We work over an algebraically closed field $\kk$ of characteristic zero. Let $X$ be an irreducible affine variety.
In this paper we study subgroups of the automorphism group $\Aut(X)$ of $X$.
As an application we study the question of characterizing affine spherical varieties via 
their automorphism groups. This question is well-studied in the context of differentiable manifold theory, 
see e.g.~\cite{Wh1963On-isomorphic-grou,Fi1982Isomorphisms-betwe,Ry1995Isomorphisms-betwe, 
	Ry2002Isomorphisms-betwe}.

In \cite{Sh1966On-some-infinite-d}  \name{Shafarevich} introduced the notion of 
an \emph{ind-group}, which is an infinite-dimensional analogue of an algebraic group.
It is known that the automorphism group  $\Aut(X)$ has a natural structure of an 
ind-group (see \cite[Section 5]{FuKr2018On-the-geometry-of} or Sect.~\ref{Sec.Preliminaries} for details). 
In particular $\Aut(X)$
has a natural topology induced by this ind-group structure.
Through the whole paper we consider only \emph{linear} algebraic groups and therefore we will drop
the adjective ``linear''.

\medskip

An algebraic group $G$ consists only of unipotent elements if and only if the neutral element is the 
only finite order element in $G$ (one can use the Jordan decomposition of an element, see e.g.~\cite[Theorem~5.3]{Hu1975Linear-algebraic-g}). This statement holds as well for closed connected commutative
subgroups $G$ in $\Aut(X)$, where $\varphi \in \Aut(X)$ is called \emph{unipotent} if there is an 
algebraic action of the one-dimensional additive group $\GG_a$ on $X$ such that $\varphi$ lies in the image of 
the natural induced homomorphism $\GG_a \to \Aut(X)$ 
(see \cite[Theorem~B]{CaReXi2023Families-of-commut} and Corollary~\ref{cor.CRX}).
%
%It is not known, whether this equivalence also holds for closed subgroups $G$ in $\Aut(X)$, where 
%$\varphi \in \Aut(X)$ is called \emph{unipotent} if there is an algebraic action of the one-dimensional 
%additive group $\GG_a$ on $X$ such that $\varphi$ is in the image of the natural homomorphism
%$\GG_a \to \Aut(X)$. However, if $G \subseteq \Aut(X)$ is a closed subgroup and if it is in addition connected and
%commutative, then $G$ consists only  of unipotent elements if and only if the neutral element is the only finite order 
%element in $G$ by \cite[Theorem~B]{CaReXi2023Families-of-commut} (see also Corollary~\ref{cor.CRX}).
%
We call a subgroup of $\Aut(X)$ \emph{unipotent} if it is closed and consists only of unipotent elements,
and we call it \emph{algebraic} if it is the image under the natural homomorphism induced by an algebraic action 
of an algebraic group on $X$. Note that unipotent subgroups of $\Aut(X)$ are always connected
and algebraic subgroups of $\Aut(X)$ are always closed.

\medskip

In this paper we describe commutative unipotent subgroups of 
$\Aut(X)$ that are maximal among such subgroups. This could be seen as a first step 
towards a systematic study of  maximal unipotent (or solvable) subgroups  of $\Aut(X)$.

In order to state our first result we have to introduce some notation.
Let $U \subseteq \Aut(X)$ be a unipotent algebraic subgroup and let $u \in U \setminus \{ \id_X\}$.
Let $\rho \colon \GG_a \times X \to X$ be a $\GG_a$-action on $X$ such that $\rho(1, x) = u(x)$ 
for all $x \in X$. For a rational $U$-invariant function $f \in \kk(X)^U$ denote by
$\dom(f) \subseteq X$ its domain. Then
\[
f \cdot u \colon \dom(f) \to \dom(f) \, , \quad x \mapsto \rho(f(x), x) 
\]
defines an automorphism of $\dom(f)$ and hence $f \cdot u$ is an element of
the group of birational transformations $\Bir(X)$ of $X$. We define
\begin{eqnarray*}
	\Repl_X(U) &\coloneqq& \set{ f \cdot u \in \Aut(X) }{ f \in \kk[X]^U, u \in U} \subseteq \Aut(X) \, , \\
	\rRepl_X(U) &\coloneqq& \set{ f \cdot u \in \Bir(X) }{ f \in \kk(X)^U, u \in U}  \subseteq \Bir(X) \, .
\end{eqnarray*}
The term $``\Repl"$ stands for ``replica'' (see e.g.~\cite{ArFlKa2013Flexible-varieties}) and $``\rRepl"$ stands for 
``rational replica''.
Denote by $\langle \rRepl_X(U) \rangle$ the subgroup
generated by $\rRepl_X(U)$ inside $\Bir(X)$ and let
\[
\A_X(U) \coloneqq \Aut(X) \cap \langle \rRepl_X(U) \rangle \subseteq \Bir(X) \, .
\]

\begin{maintheorem}[{see Corollary~\ref{Cor.Maximal2}}]
	\label{mainthm.max}
	Assume $X$ is an irreducible affine variety and let $G$ be a unipotent 
	commutative subgroup in $\Aut(X)$. 
	Then the following statements are equivalent:
	\begin{enumerate}[wide=0pt, label=\alph*), leftmargin=*]
		\item \label{mainthm.maxA} $G$  is maximal among commutative subgroups in $\Aut(X)$;
		\item \label{mainthm.maxB} $G$ is maximal among unipotent commutative subgroups in $\Aut(X)$;
		\item \label{mainthm.maxC} $G = \A_X(U)$ 
		for some  commutative unipotent algebraic subgroup $U \subseteq \Aut(X)$.
	\end{enumerate}
\end{maintheorem}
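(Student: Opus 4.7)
The plan is to prove the equivalence via the cycle $(c) \Rightarrow (a) \Rightarrow (b) \Rightarrow (c)$. The step $(a) \Rightarrow (b)$ is immediate, since unipotent commutative subgroups form a subclass of commutative subgroups of $\Aut(X)$ and $G$ itself is unipotent, so maximality in the larger class restricts to maximality in the smaller class.

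For $(c) \Rightarrow (a)$, I would first verify that $\A_X(U)$ is commutative. A direct computation exploiting the commutativity of $U$ and the $U$-invariance of the rational functions involved gives $(f_1 \cdot u_1) \circ (f_2 \cdot u_2) = (f_2 \cdot u_2) \circ (f_1 \cdot u_1)$ in $\Bir(X)$ for $u_i \in U$ and $f_i \in \kk(X)^U$. Maximality of $\A_X(U)$ among commutative subgroups of $\Aut(X)$ then follows from an orbit-based argument: take $\varphi \in \Aut(X)$ centralizing $\A_X(U)$. Since $U \subseteq \A_X(U)$, the automorphism $\varphi$ preserves the $U$-orbit foliation of $X$. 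Commutation with $f \cdot u_0$ for a fixed nontrivial $u_0 \in U$ and $f$ running over $\kk[X]^U$ (which separates generic orbits via the GIT quotient) forces $\varphi$ to preserve each $U$-orbit setwise. On each orbit $\varphi$ then acts by an element of $U$, and gluing these shifts along a $U$-invariant rational function exhibits $\varphi = f \cdot u$ for some $f \in \kk(X)^U$ and $u \in U$, so $\varphi \in \rRepl_X(U) \cap \Aut(X) \subseteq \A_X(U)$.

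For $(b) \Rightarrow (c)$, assume $G$ is maximal among unipotent commutative subgroups. The trivial case $G = \{\id_X\}$ is handled separately, since then $\Aut(X)$ contains no nontrivial unipotent element and $U = \{\id_X\}$ works. Otherwise pick a nontrivial $u \in G$ and the $\GG_a$-subgroup $U_0 \subseteq \Aut(X)$ containing it. The element $u$ has infinite order, so $\langle u \rangle$ is Zariski-dense in $U_0$; the closedness of $G$ places $U_0 \subseteq G$. Enlarge $U_0$ to a commutative unipotent algebraic subgroup $U \subseteq G$ of maximal dimension, which exists since the dimension is bounded by $\dim X$. I would then argue $G \subseteq \A_X(U)$: for each $g \in G$, commutation with $U$ together with the maximality of $\dim U$ inside $G$ should force $g$ to preserve each $U$-orbit, so that the gluing argument of the previous paragraph yields a presentation $g = f \cdot u_1$ with $f \in \kk(X)^U$ and $u_1 \in U$. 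Since $\A_X(U)$ is commutative and unipotent by the previous step, $G \subseteq \A_X(U)$ combined with the maximality hypothesis on $G$ yields $G = \A_X(U)$.

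The delicate step is the orbit-preservation claim in $(b) \Rightarrow (c)$. Here, unlike in $(c) \Rightarrow (a)$, we do not have centralizing commutation with all of $\Repl_X(U)$ to rule out nontrivial orbit permutations; instead one must exploit that $U$ has maximal dimension inside $G$. The heuristic is: if some $g \in G$ permuted $U$-orbits nontrivially, the closure of $\langle g \rangle \cdot U$ inside $G$ would be a strictly larger commutative unipotent subgroup, and an appropriate finite-dimensional algebraic truncation should contradict the maximality of $\dim U$. Making this rigorous is where I expect to invoke Theorem B of \cite{CaReXi2023Families-of-commut}, which ensures that commuting families of unipotent automorphisms are controlled by algebraic group actions and thereby enables the required enlargement of $U$.
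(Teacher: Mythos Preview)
Your $(c) \Rightarrow (a)$ is essentially the paper's argument (Proposition~\ref{Prop.centralizer}), with one slip: when $\dim U > 1$, an automorphism $\varphi$ acting by a $U$-translate on each generic orbit need not be a single $f \cdot u$ but rather a composition $(f_1 \cdot u_1) \circ \cdots \circ (f_n \cdot u_n)$ for a basis $(u_i)$ of (a complement to the generic stabiliser in) $U$; this lands in $\langle \rRepl_X(U) \rangle \cap \Aut(X) = \A_X(U)$, not merely in $\rRepl_X(U) \cap \Aut(X)$.

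The genuine gap is in $(b) \Rightarrow (c)$. Your claim that the dimension of a commutative unipotent algebraic subgroup $U \subseteq G$ is bounded by $\dim X$ is false: on $\AA^2$, the closed commutative unipotent subgroup $G = \{(x,y) \mapsto (x, y+p(x)) : p \in \kk[x]\}$ of $\Aut(\AA^2)$ contains $\GG_a^{d+1} \simeq \{(x,y) \mapsto (x, y + \sum_{i=0}^d a_i x^i)\}$ for every $d \geq 0$. So a $U \subseteq G$ of maximal dimension need not exist. If you trace your own heuristic through, you see that the existence of such a $U$ would force every $g \in G$ to lie in $U$, i.e.\ $G$ itself would be algebraic, which it generally is not.

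The paper's remedy (Lemma~\ref{Cor.Maximal}) is to choose $U$ by a different maximality: build a chain $U_1 \subsetneq U_2 \subsetneq \cdots$ of algebraic subgroups of $G$ with $\kk[X]^{U_i} \subsetneq \kk[X]^{U_{i-1}}$. Since each step drops the Krull dimension of the invariant ring by one, the chain terminates at some $U$ with $\kk[X]^U = \kk[X]^G$. This equality of invariants is exactly the orbit-preservation input you were missing: every $r \cdot u$ with $r \in \kk(X)^U = Q(\kk[X]^G)$ and $u \in U$ then commutes with all of $G$, so $\A_X(U)$ commutes with $G$, whence $G \subseteq \Cent_{\Aut(X)}(\A_X(U)) = \A_X(U)$ by the centraliser computation you already outlined, and maximality of $G$ gives $G = \A_X(U)$. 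In the $\AA^2$ example this procedure yields the one-dimensional $U = \{(x,y) \mapsto (x, y+c)\}$, not a large-dimensional subgroup.
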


The proof of Theorem~\ref{mainthm.max} depends heavily on the fact that the centralizer
of $\Repl_X(U)$ is equal to $\A_X(U)$ which we prove in Proposition~\ref{Prop.centralizer}.
As a consequence of Theorem~\ref{mainthm.max} we get:

\begin{maincorollary}
	[see Corollary~\ref{Cor.UnipotentToUnipotent}]	
	\label{algebraictoalgebraic}
	Assume $\kk$ is uncountable. Let $X, Y$ be irreducible affine varieties and let
	$\theta \colon \Aut(X) \to \Aut(Y)$ be a group isomorphism. Then 
	$\theta$ maps unipotent elements to unipotent elements.
\end{maincorollary}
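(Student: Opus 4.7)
My plan is to embed $u$ in a maximal commutative subgroup of the explicit form $\A_X(U)$ provided by Theorem~\ref{mainthm.max}, transport it by $\theta$, and then recognize the image as a unipotent subgroup of $\Aut(Y)$.

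Since $u$ is unipotent, there is a $\GG_a$-action on $X$ whose image $U \subseteq \Aut(X)$ is a commutative unipotent algebraic subgroup containing $u$. Take $G := \A_X(U)$; choosing $f = 1$ in the definition of $\Repl_X(U)$ shows $u \in \Repl_X(U) \subseteq G$. By Theorem~\ref{mainthm.max}, $G$ is maximal among commutative subgroups of $\Aut(X)$ and simultaneously among unipotent commutative subgroups, so $G$ is unipotent and in particular torsion-free. Set $H := \theta(G) \subseteq \Aut(Y)$. Since maximality among commutative subgroups is a lattice-theoretic property preserved by the group isomorphism $\theta$, and $\theta$ preserves the order of every element, $H$ is a maximal commutative, torsion-free subgroup of $\Aut(Y)$. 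It now suffices to prove that $H$ is unipotent, since then $\theta(u) \in H$ is unipotent by definition.

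To show $H$ is unipotent, I would first note that $H$ is closed in $\Aut(Y)$ (the closure of any commutative subgroup of an ind-group is commutative, and $H$ is maximal) and then pass to its identity component $H^0$, which is closed, connected, commutative and torsion-free. By the generalization of the classical algebraic-group dichotomy recalled in the excerpt and credited to \cite{CaReXi2023Families-of-commut}, $H^0$ consists entirely of unipotent elements. The plan is then to pick a one-parameter $\GG_a$-subgroup $V \subseteq H^0$, apply Theorem~\ref{mainthm.max}(c) in $\Aut(Y)$ to deduce that $\A_Y(V)$ is itself maximal commutative, and combine $V \subseteq H$ with the centralizer description $\A_Y(V) = \Cent(\Repl_Y(V))$ from Proposition~\ref{Prop.centralizer} to obtain $H \subseteq \A_Y(V)$. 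Maximality of $H$ then forces $H = \A_Y(V)$, which is unipotent by construction.

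The main obstacle is verifying the inclusion $H \subseteq \A_Y(V)$: while each element of $H$ automatically commutes with $V$, centralizing all of $\Repl_Y(V)$ is equivalent to every $V$-invariant regular function on $Y$ being invariant under $H$, which is not formal. This is also where I expect the uncountability of $\kk$ to be used, by translating the uncountable divisible torsion-free abstract structure of $\theta(U) \subseteq H$ into the required algebraic invariance of regular functions; a related subtlety of the same flavor is producing the nontrivial algebraic $\GG_a$-subgroup $V$ inside $H^0$ from purely group-theoretic data.
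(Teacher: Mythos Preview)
Your overall strategy matches the paper's: embed $u$ in $G = \A_X(U)$, set $H = \theta(G)$, observe that $H$ is maximal commutative and closed, and deduce that $H^\circ$ is unipotent. The gap you flag is genuine, but your proposed resolution---deducing $H$-invariance of $V$-invariant regular functions from the abstract divisible torsion-free structure of $\theta(U)$---does not work: there is no mechanism to convert that purely group-theoretic information into control over the coordinate ring of $Y$, and two distinct maximal commutative subgroups (here $H$ and $\A_Y(V)$) may well share a large common subgroup such as $H^\circ$ without coinciding.

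The paper fills the gap differently and more directly. Rather than showing $H \subseteq \A_Y(V)$, one shows that $H^\circ$ is already maximal among \emph{connected} commutative subgroups of $\Aut(Y)$, and this is where uncountability enters. If $K \supseteq H^\circ$ is connected and commutative, then $\theta^{-1}(K)$ contains $S \coloneqq \theta^{-1}(H^\circ)$, which has countable index in the connected ind-group $G$; since $\kk$ is uncountable, $S$ is dense in $G$, so
\[
\theta^{-1}(K) \subseteq \Cent_{\Aut(X)}(S) = \Cent_{\Aut(X)}(G) = G
\]
by Proposition~\ref{Prop.centralizer}, whence $K \subseteq H$ and, being connected, $K = H^\circ$. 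Since closed unipotent subgroups are always connected, $H^\circ$ is then maximal among \emph{unipotent} commutative subgroups, and Theorem~\ref{mainthm.max} (equivalence of \ref{mainthm.maxA} and \ref{mainthm.maxB}) upgrades this to maximality among all commutative subgroups. As $H \supseteq H^\circ$ is commutative, this forces $H = H^\circ$, so $\theta(u) \in H$ is unipotent. Note that this argument also absorbs your ``related subtlety'': the same countable-index density reasoning guarantees $H^\circ$ is nontrivial without having to produce a $\GG_a$ inside it by hand.
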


\medskip

Corollary~\ref{algebraictoalgebraic} provides an important tool to study the following question when the objects under consideration are affine varieties.

\begin{question*}
	Let $X$, $Y$ be objects in some category $\mathscr{C}$. 
	If there exists a group isomorphism
	$\theta \colon \Aut_{\mathscr{C}}(X) \to \Aut_{\mathscr{C}}(Y)$, 
	are then $X$, $Y$ isomorphic in $\mathscr{C}$?
\end{question*}

Of course, one cannot hope for an affirmative answer of this question in the whole 
category of varieties, as the automorphism groups of varieties are trivial in general. 
However, if $X$ is rather special, over the field of complex numbers 
we have affirmative answers in the following instances
(when the morphisms in the category $\mathscr{C}$ are not specified, we consider morphisms of varieties;
some of the results hold over more general fields than the complex numbers):
\begin{enumerate}[wide=0pt, label=\roman*), leftmargin=*]
	\item \label{Item.History_1} $X = \PP^n$ is the projective space, where
	$\mathscr{C}$ is the category of irreducible projective $n$-dimensional varieties together
	with rational maps; see~\cite[Theorem~C]{Ca2014Morphisms-between-};
	\item \label{Item.History_2} $X = \AA^n$ is the affine space, where $n \geq 1$,
	$\mathscr{C}$ is the category of connected affine varieties 
	and $\theta$ is an ind-group homomorphism; see~\cite[Theorem~1.1]{Kr2017Automorphism-group};
	\item \label{Item.History_3} 
	$X$ is a Danielewski surface, where
	$\mathscr{C}$ is the category of irreducible smooth affine surfaces and $\theta$ is an ind-group isomorphism; see \cite[Theorem~1 and Remark~1]{LiReUr2022On-the-Characteriz};						
	\item \label{Item.History_3.5} $X$ is a Danielewski surface, where
	$\mathscr{C}$ is the category of irreducible affine varieties and $\theta$ is an ind-group isomorphism; 
	see \cite[Theorem~2]{Re2022Characterization-o};
	\item \label{Item.History_5} 
	$X$ is an affine toric variety that is non-isomorphic to an algebraic torus, where
	$\mathscr{C}$ is the category of irreducible normal affine varieties
	and $\theta$ is an ind-group isomorphism;
	see \cite[Theorem~1.4]{LiReUr2023Characterization-o};
	\item \label{Item.History_4} $X$ is a smooth affine spherical variety that is
	non-isomorphic to an algebraic torus, 
	where $\mathscr{C}$ is the category of irreducible smooth affine varieties 
	and $\theta$ is an ind-group isomorphism; 
	see~\cite[Theorem~A]{ReSa2021Characterizing-smo};
	\item \label{Item.History_6}
	$X = \AA^n$, where
	$\mathscr{C}$ is the category of smooth irreducible quasi-projective varieties of dimension $n$
	such that the Euler characteristic doesn't vanish and the Picard group is finite;
	see~\cite[Theorem]{KrReSa2021Is-the-affine-spac};	
	\item \label{Item.History_7}
	$X$ is an affine toric surface, where
	$\mathscr{C}$ is the category of irreducible normal affine surfaces;
	see \cite[Theorem~1.3]{LiReUr2023Characterization-o};
	\item \label{Item.History_8}
	$X = \AA^n$, where $n \geq 1$ and $\mathscr{C}$ is the category of 
	connected affine varieties; see \cite[Theorem~A]{CaReXi2023Families-of-commut}.
\end{enumerate}
Note that the assumption that $\Aut(X)$, $\Aut(Y)$ are isomorphic as abstract groups is much weaker than the assumption that these groups are isomorphic as ind-groups
(as it is assumed in cases~\ref{Item.History_2}-\ref{Item.History_4}). 
Indeed, e.g.~the automorphism groups of 
two generic Danielewski surfaces are isomorphic as abstract groups, 
but non-isomorphic as ind-groups; see \cite[Theorem 3 and Remark~7]{LeRe2022Vector-fields-and-}).

In Theorem~\ref{mainthm.A} and Corollary~\ref{maincor.B} 
below we generalize statements~\ref{Item.History_2}, \ref{Item.History_5}-\ref{Item.History_8}
to affine smooth spherical (affine toric) varieties
when $\mathscr{C}$ is the category of smooth (normal) irreducible affine varieties.

%the following result that generalizes results from \cite{CaReXi2023Families-of-commut}, \cite{ReSa2021Characterizing-Aff} and \cite{ReSa2021Characterizing-smo}. We send the reader to \cite{ReSa2021Characterizing-Aff} for the detailed introduction on the topic. 

Let $X$
be an irreducible affine variety endowed with a faithful algebraic group action of
a connected reductive algebraic group $G$ 
and let $B \subseteq G$ be a Borel subgroup. Then $X$ is called \emph{$G$-spherical} if
$X$ is normal and $B$ acts with a dense open orbit on $X$. 
We denote by  $\frak{X}(B)$ the character group of $B$, i.e. the group of
algebraic group homomorphisms $B \to \mathbb{G}_m$.
The \emph{weight monoid} of $X$ is defined by
\[
\Lambda_B^+(X) \coloneqq \{ \,\lambda \in \frak{X}(B) \ | \ \kk[X]_{\lambda}^{(B)} \neq 0 \, \} \, ,
\] 
where $\kk[X]_{\lambda}^{(B)} \subset \kk[X]$ 
denotes the subspace of $B$-semi-invariants of weight $\lambda$
of the coordinate ring $\kk[X]$.

In order to simplify the formulation, we state Theorem~\ref{mainthm.A}
only when $X$  is defined over $\QQ$ (we give a more general version in Theorem~\ref{thm.Ageneralization}). 
Assume that $X$ is a closed subvariety of some affine space $\AA^n$ and assume that
$X$ is defined over $\QQ$, i.e.~$X$ is the zero set in $\AA^n$ of some polynomials with coefficients in $\QQ$.
If $\tau$ is a field automorphism of
$\kk$, then the map $\AA^n \to \AA^n$, $(a_1, \ldots, a_n) \mapsto (\tau(a_1), \ldots, \tau(a_n))$
sends $X$ onto itself and induces thus a map
\[
	\tau_X \colon X \to X 
\]
(in fact $\tau_X$ only depends on $X$, not on the embedding of $X$ into $\AA^n$).
If $\theta \colon \Aut(X) \to \Aut(Y)$ is a group isomorphism, then 
\[
	\Aut(X) \to \Aut(Y) \, , \quad \varphi \mapsto \theta( \tau_X^{-1} \circ \varphi \circ \tau_X)
\]
is again a group isomorphism. This is the reason why field automorphisms
of $\kk$ appear in the next theorem:

\begin{maintheorem}\label{mainthm.A}
	Assume that $\kk$ is uncountable, let $X$, $Y$ be irreducible affine varieties, where $X$ is
	not isomorphic to an algebraic torus, let
	$\theta \colon \Aut(X) \to \Aut(Y)$ be a group isomorphism
	and let $G \subseteq \Aut(X)$ be a connected reductive 
	algebraic subgroup.
	If $X$ is $G$-spherical and if $X$, $G$ and the action of $G$ on $X$ are 
	defined over $\QQ$, then there
	exists a  field automorphism $\tau$ of $\kk$ such that:
	\begin{enumerate}[wide=0pt, leftmargin=*]
		\item
		The image $\theta(G)$ is an algebraic subgroup of $\Aut(Y)$ and
		$\theta |_G \circ \tau_G \colon G \to \theta(G)$ is an isomorphism of algebraic groups. 
	\end{enumerate}
	For the following statements we endow $Y$ with the $G$-action induced by $\theta |_G \circ \tau_G$
	and assume in addition that $Y$ is normal.
	\begin{enumerate}[wide=0pt, leftmargin=*]
		\addtocounter{enumi}{1}
		\item The variety $Y$ is $G$-spherical.
		\item
		If $B \subseteq G$ is a Borel subgroup defined over $\QQ$, then
		the weight monoids $\Lambda^+_{B}(X)$  and $\Lambda^+_{B}(Y)$
		are the same inside $\frak{X}(B)$.
		\item If $X$ and $Y$ are smooth, then they are $G$-equivariantly isomorphic.
	\end{enumerate}
\end{maintheorem}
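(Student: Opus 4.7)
The plan is to first transfer the algebraic group $G$ and its action from $X$ to $Y$ via $\theta$ (up to a field automorphism $\tau$ of $\kk$), establishing statement~(1), and then to deduce sphericity and the weight-monoid equality (statements~(2) and~(3)), and finally the $G$-equivariant isomorphism in the smooth case (statement~(4)). The starting point for~(1) is Corollary~\ref{algebraictoalgebraic}, which ensures that $\theta$ preserves unipotent elements; combined with Theorem~\ref{mainthm.max}, this produces a canonical bijection between the maximal commutative unipotent subgroups of $\Aut(X)$ and those of $\Aut(Y)$. Applying this to the one-dimensional root subgroups $U_\alpha \subseteq G$, I would first argue that each $\theta(U_\alpha)$ is a unipotent algebraic subgroup of $\Aut(Y)$, by locating it inside an appropriate $\A_Y(V)$ and invoking a centralizer analysis based on Proposition~\ref{Prop.centralizer}. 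The induced abstract isomorphism between $U_\alpha$ and $\theta(U_\alpha)$, viewed as copies of $\GG_a$ inside the ind-group $\Aut(Y)$, should then be forced to be polynomial up to a field automorphism $\tau_\alpha$ of $\kk$. A compatibility argument exploiting the commutation relations among root subgroups and the $\QQ$-rationality of $G$ shows that all $\tau_\alpha$ coincide, yielding a single $\tau$. The maximal torus transfers via its conjugation action on the $U_\alpha$'s together with rigidity of tori, and combining everything gives $\theta(G) \subseteq \Aut(Y)$ algebraic with $\theta|_G \circ \tau_G \colon G \to \theta(G)$ an algebraic isomorphism.

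Endowing $Y$ with the transferred algebraic $G$-action, statements~(2) and~(3) reduce to showing $\kk(Y)^B = \kk$ for a Borel $B \subseteq G$ and matching the weight monoids. I would prove sphericity and the weight-monoid equality simultaneously by constructing a weight-preserving bijection between the $B$-semi-invariants of $\kk[X]$ and those of $\kk[Y]$. The key tool is the $\Repl$ construction: every $f \in \kk[X]^{(B)}_\lambda$, together with a unipotent element $u$ from a root subgroup normalized by $B$, produces an automorphism $f \cdot u \in \Repl_X(U) \subseteq \Aut(X)$ whose conjugation behavior by $B$ records the weight $\lambda$. Since $\theta$ preserves this $\Repl$-structure up to $\tau$, the weight data transfer and yield $\Lambda^+_B(X) = \Lambda^+_B(Y)$; sphericity of $Y$ then follows because this monoid has full rank in the character group of $B$ and hence produces a $B$-orbit of maximal dimension in $Y$.

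For statement~(4), I would invoke Losev's classification theorem for smooth affine spherical $G$-varieties, asserting that any two such varieties with the same weight monoid are $G$-equivariantly isomorphic. Combined with~(3) this yields $X \simeq Y$ as $G$-varieties. The main obstacle lies in statement~(1): showing algebraicity of $\theta(G)$ and isolating a single field automorphism $\tau$ that algebraizes $\theta|_G$ simultaneously on all of $G$ requires both the full strength of Theorem~\ref{mainthm.max} and a Borel--Tits type rigidity argument anchored by the $\QQ$-structure hypothesis. Once~(1) is established, the weight-monoid transfer follows naturally from the $\Repl$-calculus developed earlier in the paper, and the smooth case reduces cleanly to Losev's theorem.
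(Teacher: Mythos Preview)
Your outline for part~(1) is broadly in the spirit of the paper's argument (Theorem~\ref{Prop.Preserving_alg_groups}), and you correctly identify Borel--Tits rigidity and Losev's theorem as the endpoints for~(1) and~(4). However, there is a genuine gap in your treatment of~(2) and~(3): you reverse the logical order used in the paper, and the step where you recover sphericity of $Y$ from the weight monoid does not work as stated.

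Concretely, you propose to establish $\Lambda^+_B(X)=\Lambda^+_B(Y)$ first, via the $\Repl$-calculus, and then deduce that $\theta(B)$ has a dense orbit on $Y$ because ``this monoid has full rank in the character group of $B$''. But full rank of the set of $B$-semi-invariant weights does not force a dense $B$-orbit: at this stage you do not even know $\dim Y=\dim X$, and without a dimension bound there are easy counterexamples (e.g.\ products with a trivial factor). Moreover, the $\Repl$-bijection you sketch, sending $f\cdot u$ to $\theta(f\cdot u)$, only records the \emph{weight} $\lambda+\mu$ of $f\cdot u$ under $B$-conjugation; it does not by itself produce a $B$-semi-invariant function on $Y$, so you do not get a bijection of semi-invariants or a multiplicity-freeness statement for $\kk[Y]$. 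The paper proceeds in the opposite order: it first proves sphericity of $Y$ (Proposition~\ref{Prop.Spherial}) via the purely group-theoretic characterization in Proposition~\ref{Prop.Characterization_Spherical}, namely that $B$ has a dense orbit if and only if the dimensions of generalized $B$-root subgroups are uniformly bounded---a condition that transfers under~$\theta$ by Theorem~\ref{Prop.Preserving_alg_groups}\ref{Prop.Preserving_alg_groups3},\ref{Prop.Preserving_alg_groups4}. Only once $Y$ is known to be $\theta(G)$-spherical does the paper invoke \cite[Corollary~8.4]{ReSa2021Characterizing-smo} to pass from the bijection of $B$-root-subgroup weights $D_B(X)\leftrightarrow D_{\theta(B)}(Y)$ to equality of weight monoids. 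Your $\Repl$-transfer is essentially this $D_B$-bijection, but the conversion to~$\Lambda^+_B$ genuinely requires sphericity as an input, not an output.
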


\begin{remark}
	In fact (up to isomorphism) every connected reductive algebraic group is defined over $\QQ$ 
	by \cite[Exp. XXV, Corollaire 1.3 and Exp. XXII, Corollaire~2.4]{DeGr2011Schemas-en-groupes}.
	To the authors' knowledge, it is not known, whether every (smooth) affine spherical variety is defined over $\QQ$.
\end{remark}

The proof of Theorem~\ref{mainthm.A} is given at the end of Sect.~\ref{Sec.CharSpherical_different_from_torus}. As we already mentioned, it depends on
the fact that unipotent elements are mapped onto unipotent elements via
the group isomorphism $\theta \colon \Aut(X) \to \Aut(Y)$. Moreover, we also prove that the
root subgroups of $\Aut(X)$ correspond bijectively to the root subgroups of $\Aut(Y)$ via $\theta$
(see Theorem~\ref{Prop.Preserving_alg_groups}). This 
is another major ingredient in the proof of Theorem~\ref{mainthm.A}.

For the following consequence of Theorem~\ref{mainthm.A}, recall that a 
toric variety is a $G$-spherical variety, where $G$ is an algebraic torus.
We prove it at the end of Sect.~\ref{Sec.CharSpherical_different_from_torus}.

\begin{maincorollary}
	\label{maincor.B}
	Assume that $\kk$ is uncountable, let $X, Y$ be irreducible normal
	affine varieties such that $\Aut(X)$ and $\Aut(Y)$ are isomorphic as abstract groups.
	If $X$ is toric and non-isomorphic to an algebraic torus, then $Y$ is isomorphic to $X$. \qed
\end{maincorollary}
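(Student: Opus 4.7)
The plan is to apply Theorem~\ref{mainthm.A} with $G = T \subseteq \Aut(X)$ the acting torus, and then reconstruct $Y$ from the weight monoid. Since $X$ is toric and not itself a torus, there is a faithful algebraic action of a torus $T$ on $X$ with a dense open orbit; as $T$ is its own Borel subgroup, $X$ is automatically $T$-spherical. A crucial preparatory observation is that $X$, $T$ and the action are defined over $\QQ$: indeed, a normal affine toric $T$-variety has the form $\Spec \kk[M]$ for a finitely generated saturated submonoid $M$ of the character lattice $\frak{X}(T)$, and both the variety and the natural $T$-action are obtained by base change from the corresponding objects over $\ZZ$.

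Theorem~\ref{mainthm.A} then produces a field automorphism $\tau$ of $\kk$ such that $\theta(T)$ is a closed algebraic subgroup of $\Aut(Y)$, with $\theta|_T \circ \tau_T \colon T \to \theta(T)$ an isomorphism of algebraic groups. Endowing $Y$ with the $T$-action transported via this isomorphism and using the normality hypothesis on $Y$, conclusion (2) of Theorem~\ref{mainthm.A} tells us that $Y$ is itself $T$-toric, while conclusion (3), applied to the Borel subgroup $B = T$ (which is defined over $\QQ$), yields the equality of weight monoids $\Lambda^+_T(X) = \Lambda^+_T(Y)$ inside $\frak{X}(T)$.

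To conclude, I will invoke the classification of normal affine toric varieties: such a variety is recovered from its weight monoid $\Lambda$ as $\Spec \kk[\Lambda]$. Concretely, the coordinate ring $\kk[X]$ decomposes as a direct sum of one-dimensional $T$-weight spaces indexed precisely by $\Lambda^+_T(X)$, giving $\kk[X] \cong \kk[\Lambda^+_T(X)]$ as $T$-algebras, and likewise for $Y$. Since the weight monoids agree, $X$ and $Y$ are $T$-equivariantly isomorphic, and in particular isomorphic as varieties. The main thing to watch is to check the $\QQ$-rationality hypothesis of Theorem~\ref{mainthm.A}; the rest of the argument reduces to the combinatorial description of affine toric varieties and is routine. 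Note also that this approach sidesteps the smoothness assumption appearing in part~(4) of Theorem~\ref{mainthm.A}, since the weight monoid alone suffices to reconstruct a normal affine \emph{toric} variety.
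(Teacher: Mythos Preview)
Your argument is correct and follows essentially the same approach as the paper: apply Theorem~\ref{mainthm.A} to the torus $G=T$, using that toric varieties (together with the torus action) are defined over $\QQ$, and conclude from part~(3) that the weight monoids agree, which determines a normal affine toric variety up to isomorphism. The paper's proof is just a two-line compression of exactly what you have written out.
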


\begin{remark*}[see Remark~\ref{Rem.Both toric}]
	In case $X, Y$ are both affine toric varieties and $\Aut(X)$, $\Aut(Y)$ are isomorphic as groups,
	then $X, Y$ are isomorphic as toric varieties. 
\end{remark*}

Corollary~\ref{maincor.B} fails 
if $X$ is an algebraic torus: 
For every irreducible smooth affine curve $C$ with trivial $\Aut(C)$ 
that admits no non-constant
invertible regular function $C \to \AA^1$ we get that
$\Aut(X)$ and $\Aut(X \times C)$ are isomorphic by \cite[Example~6.17]{LiReUr2023Characterization-o}. 
However, if we assume that
$\dim Y \leq \dim X$, then
Corollary~\ref{maincor.B} holds for an algebraic torus $X$ as well (the proof is given
at the end of Sec.~\ref{sec.X_is_the_torus}):

\begin{maintheorem}
	\label{mainthm.C}
	Assume that $\kk$ is uncountable.
	Let $X$ be an algebraic torus and let
	$Y$ be an irreducible normal affine variety such that
	$\Aut(X), \Aut(Y)$ are isomorphic as groups and $\dim Y \leq \dim X$.
	Then $X$, $Y$ are isomorphic as varieties.
\end{maintheorem}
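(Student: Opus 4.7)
The plan is to combine Corollary~\ref{algebraictoalgebraic} with a direct analysis of $\Aut(Y)$ under the dimension constraint. Set $n = \dim X$, so that $X \cong \GG_m^n$ as algebraic groups; then $\Aut(X) \cong \GG_m^n \rtimes \GL_n(\ZZ)$ contains no non-trivial unipotent elements, since $\GG_m^n$ itself admits no non-trivial $\GG_a$-action. Applying Corollary~\ref{algebraictoalgebraic} both to $\theta$ and to its inverse, we deduce that $\Aut(Y)$ likewise has no non-trivial unipotent elements. Hence $Y$ admits no non-trivial $\GG_a$-action, and every connected algebraic subgroup of $\Aut(Y)$ is reductive with no non-trivial unipotent elements, hence is an algebraic torus. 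Let $T' \subseteq \Aut(Y)$ be a maximal algebraic torus; as $T'$ acts faithfully on $Y$ we have $\dim T' \leq \dim Y \leq n$.

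The crux of the proof is to establish $\dim T' = n$. The image $\theta(X) \subseteq \Aut(Y)$ is an abstract subgroup isomorphic to $\GG_m^n$; in particular it is divisible, with torsion subgroup isomorphic to $(\QQ/\ZZ)^n$. The idea is to promote this abstract structure to a genuine torus action. For each $m \geq 1$, the image $\theta(X[m]) \cong (\ZZ/m\ZZ)^n$ of the $m$-torsion is a finite algebraic subgroup of $\Aut(Y)$ consisting of semisimple elements, so by the absence of non-trivial unipotent elements it sits inside a product of algebraic tori in $\Aut(Y)$; a compatibility argument across $m$ should produce a single $n$-dimensional torus containing $\theta(X_{\mathrm{tors}})$, forcing $\dim T' \geq n$. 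Equivalently, since the component group of $\Aut(X)$ is $\GL_n(\ZZ)$, which is finitely generated and residually finite and therefore admits no non-trivial divisible quotient, one can try to show that $\theta(X)$ is contained in the connected component $\Aut(Y)^\circ$ of the identity, and extract $T'$ from ind-group considerations there.

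Once $\dim T' = n$, the action of $T'$ on $Y$ is faithful of dimension $n = \dim Y$, so it has an open orbit and $Y$ is a normal affine toric variety under $T'$. Since $Y$ admits no $\GG_a$-action and every normal affine toric variety different from a torus admits one (via Demazure roots corresponding to rays of its defining cone), we conclude $Y \cong T' \cong X$.

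The main obstacle is the middle step: turning the purely group-theoretic fact that $\theta(X) \cong \GG_m^n$ into the geometric statement that $\Aut(Y)$ contains an $n$-dimensional algebraic torus. This requires carefully controlling how the algebraic structure of $\Aut(Y)$ interacts with the abstract isomorphism $\theta$, and should form the technical heart of Section~\ref{sec.X_is_the_torus}.
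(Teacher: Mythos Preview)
Your overall framework is sound: the opening step (no unipotent elements in $\Aut(Y)$, via Corollary~\ref{algebraictoalgebraic}) and the closing step (an $n$-dimensional torus action on $Y$ with no $\GG_a$-actions forces $Y$ to be a torus, cf.~Lemma~\ref{Lem.Characterize_Torus}) are both correct and match the paper. You are also right that the crux is producing an $n$-dimensional algebraic torus inside $\Aut(Y)$.

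However, the two mechanisms you sketch for this crux do not work as stated. The torsion argument fails because, although $\theta(X[m]) \cong (\ZZ/m\ZZ)^n$ is a finite abelian subgroup of $\Aut(Y)$, nothing forces these subgroups, as $m$ varies, to sit inside a single algebraic torus; finite abelian groups can live in many different components of an ind-group, and there is no compatibility mechanism available at this level of generality. The divisibility argument (``$\GL_n(\ZZ)$ has no divisible quotient, so $\theta(X) \subseteq \Aut(Y)^\circ$'') presupposes that the abstract isomorphism $\theta$ respects connected components, which is exactly what is not known.

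The paper fills this gap by a completely different route. First, since $T$ equals its own centralizer in $\Aut(T)$ (Proposition~\ref{Lem.Centralizer_open_dense_orbit}\eqref{Lem.Centralizer_open_dense_orbit1}), the image $\theta(T)$ is \emph{closed} in $\Aut(Y)$, and then Lemma~\ref{lem.torus_to_closed_set_implies_torus} gives that $\theta(T)^\circ$ is an algebraic torus of dimension $r \leq n$. To exclude $0 < r < n$, the paper does not use torsion or divisibility at all; instead it exploits the copy of $\SL_n(\ZZ) \subseteq \Aut_{\grp}(T) \subseteq \Aut(T)$. Its image normalizes $\theta(T)^\circ$, giving a homomorphism $\SL_n(\ZZ) \to \GL_r(\ZZ)$. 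By Margulis' normal subgroup theorem (for $n \geq 3$; a direct argument for $n = 2$) the kernel is either finite or of finite index. Finite kernel is ruled out by the Cantat--Xie bound (Theorem~\ref{thm.CX}): a finite-index subgroup of $\SL_n(\ZZ)$ cannot embed in $\Aut(\GG_m^r)$ with $r < n$. Finite-index kernel is ruled out by an explicit elementary-matrix computation showing the conjugation action on $\theta(T)^\circ$ cannot be trivial on a large subgroup. The case $r = 0$ falls to the uncountability of $\kk$. This rigidity input from lattices in higher-rank Lie groups is the missing idea in your proposal.
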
	

If we replace in Corollary~\ref{maincor.B} the assumption that $X$, $Y$
are affine by the assumption that $X$, $Y$ are projective, then Corollary~\ref{maincor.B} fails. 
We give counterexamples in Sec.~\ref{sec.Counterexample_proj_case}.
Moreover, the normality assumption in Corollary~\ref{maincor.B}
is also necessary: There exists an irreducible non-normal 
affine variety $Y$ such that its normalization $\tilde{Y}$ is an affine 
toric variety that is non-isomorphic to an algebraic torus
and the normalization map $\tilde{Y} \to Y$ induces a group isomorphism $\Aut(Y) \to \Aut(\tilde{Y})$, see~\cite[Proposition~9.1]{ReSa2021Characterizing-smo} and \cite[Theorem~4.5]{DiLi2023On-the-Automorphis}. 
Also the irreducibility of
$Y$ in Corollary~\ref{maincor.B} is needed: In fact, if $X$ is any 
affine toric variety of positive dimension and if $Y$ is the disjoint union of $X$
with any affine normal variety that has a trivial automorphism group, then $\Aut(X)$ and $\Aut(Y)$
are isomorphic.

\subsection*{Acknowledgements}
The authors would like to thank \name{J\'er\'emy Blanc}, \name{Michel Brion}, \name{Peter Feller},
\name{Alexander Perepechko} and
\name{Mikhail Zaidenberg} 
for fruitful discussions and comments. We also thank \name{Ivan Arzhantsev} who pointed out to us
the proof of Lemma~\ref{Lem.No_B-root subgroups}.

\section{Preliminary results on ind-groups and algebraic subgroups}
\label{Sec.Preliminaries}

We refer to the survey~\cite{FuKr2018On-the-geometry-of} for the basic notions and results on ind-varieties and ind-groups. An  \emph{ind-variety} $Z$ is a set together with a countable
increasing filtration
of closed subvarieties
\[
Z_1 \subseteq Z_2 \subseteq Z_3 \subseteq \cdots \subseteq Z
\]
such that $Z = \bigcup_{i=1}^\infty Z_i$. 
The ind-variety $Z$ is called \emph{affine} if $Z_i$ is affine for all $i \geq 1$.
Declaring a subset $A \subseteq Z$ to be \emph{closed},
if $A \cap Z_i$ is closed in $Z_i$ for all $i \geq 1$, turns $Z$ into a topological space. Moreover, 
a subset $S \subseteq Z$ is called a \emph{closed subvariety} of $Z$ if 
there exists $i \geq 1$ such that $S \subseteq Z_i$ and $S$ is a closed subvariety in $Z_i$.
A \emph{morphism of ind-varieties} is a map
\[
f \colon Z = \bigcup_{i=1}^\infty Z_i \to W = \bigcup_{j=1}^\infty W_j
\]
such that for all $i \geq 1$ there exists $j = j(i) \geq 1$ with $f(Z_i) \subseteq W_j$ and
$f |_{Z_i} \colon Z_i \to W_j$ is a morphism of varieties. 

An \emph{ind-group} is a group $G$ that is at the same time an affine 
ind-variety and the group operations
are morphisms of ind-varieties. We denote by $G^\circ$ the connected component of an ind-variety $G$
that contains the neutral element. It is a countable index subgroup in $G$.
An \emph{ind-group homomorphism} $G \to H$
is a homomorphism of groups that is at the same time a morphism of ind-varieties.

If $X$ is an affine variety, then $\Aut(X)$ admits in a natural way the structure of an ind-group
such that for all algebraic groups $G$ the  
algebraic group actions of $G$ on $X$ correspond to the ind-group
homomorphisms $G \to \Aut(X)$. The images of the ind-group homomorphisms
$G \to \Aut(X)$ are called \emph{algebraic subgroups} of $\Aut(X)$.
It then turns out that a subgroup in $\Aut(X)$ is algebraic if and only
if it is a closed subvariety in $\Aut(X)$.
For a subset $S \subseteq \Aut(X)$, we denote by $\langle S \rangle \subseteq \Aut(X)$
the subgroup of $\Aut(X)$ that is generated by $S$.

All our theorems heavily depend on the following result due to Cantat,  Xie and the first author:

\begin{theorem}[{see \cite[Theorem~B]{CaReXi2023Families-of-commut}}]
	\label{thm.CRX}
	Let $X$ be an affine variety and  let $V$ be an irreducible closed subvariety of $\Aut(X)$
	such that 
	\[
	\id_X \in V \quad \textrm{and} \quad 
	\varphi \circ \psi = \psi \circ \varphi \quad
	\textrm{for all $\varphi, \psi \in V$} \, .
	\]
	Then $\langle V \rangle \subseteq \Aut(X)$ is an algebraic subgroup of $\Aut(X)$. \qed
\end{theorem}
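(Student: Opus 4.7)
The plan is to approximate $\langle V \rangle$ from below by a chain of irreducible closed subvarieties and to identify its stabilization as the desired algebraic subgroup. For each $n \geq 1$, I would define $V^{\cdot n} \subseteq \Aut(X)$ to be the image of the $n$-fold composition morphism $V^{\times n} \to \Aut(X)$ and set $W_n = \overline{V^{\cdot n}}$ for the Zariski closure in $\Aut(X)$. Since $V^{\cdot n}$ is a constructible subset contained in some filtration piece $\Aut(X)_{k_n}$, each $W_n$ is a finite-dimensional irreducible closed subvariety of $\Aut(X)$, and because $\id_X \in V$ the chain $V = W_1 \subseteq W_2 \subseteq \cdots$ is increasing. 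Pairwise commutativity is a Zariski-closed condition on $\Aut(X) \times \Aut(X)$ that holds on the dense subset $V^{\cdot n} \subseteq W_n$, so all elements of every $W_n$ commute with one another.

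The main step, and the principal obstacle, is to produce a single integer $k$ with $W_n \subseteq \Aut(X)_k$ for every $n$. Given such a uniform bound, the chain $(W_n)$ would consist of irreducible closed subvarieties of the finite-dimensional variety $\Aut(X)_k$, hence have bounded dimension, so it must stabilize at some stage $N$, giving $W_N = W_{N+1} = \cdots$. The natural line of attack is to confine all $W_n$ to the centralizer $Z_{\Aut(X)}(V)$, which is a closed ind-subgroup of $\Aut(X)$, and to exploit the commutativity together with the irreducibility and finite-dimensionality of $V$ to produce an a priori finite-dimensional commutative algebraic envelope of $V$ — for instance by analyzing the tangent space at $\id_X$ and showing that the derivations arising from the various $W_n$ are generated by those coming from $V$ and so span a fixed finite-dimensional subspace. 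Making this rigorous in a way that genuinely yields a finite-dimensional ambient variety is the hardest part of the proof and is where the specific ind-group structure of $\Aut(X)$ has to enter in an essential way.

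Once stabilization has been established, the relation $W_N \cdot W_N \subseteq W_{2N} = W_N$ exhibits $W_N$ as an irreducible closed subvariety of the finite-dimensional variety $\Aut(X)_k$ that contains $\id_X$ and is closed under composition. A standard argument then promotes $W_N$ to a closed algebraic subgroup: for any $w \in W_N$ the left-translation map $L_w \colon W_N \to W_N$, $x \mapsto w \circ x$, is an injective morphism whose image $w \cdot W_N$ is a closed subvariety of $W_N$ of the same dimension, and irreducibility of $W_N$ forces $w \cdot W_N = W_N$; since $\id_X \in w \cdot W_N$ this yields $w^{-1} \in W_N$. Finally, $V \subseteq W_N$ gives $\langle V \rangle \subseteq W_N$, while $V^{\cdot N} \subseteq \langle V \rangle$ contains a dense open subset of the connected algebraic group $W_N$; since any subgroup of a connected algebraic group containing a dense open subset must be the whole group, I conclude $\langle V \rangle = W_N$, giving the desired algebraic subgroup of $\Aut(X)$.
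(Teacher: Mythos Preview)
The paper does not prove this statement: it is quoted verbatim from \cite[Theorem~B]{CaReXi2023Families-of-commut} and marked with a \qed, so there is no in-paper argument to compare against. Your outline reproduces the standard ``growing chain of irreducible varieties'' framework, and the final two paragraphs (stabilization $\Rightarrow$ subgroup, and $\langle V\rangle = W_N$) are correct and routine.

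The gap is exactly where you flag it yourself: the uniform bound $W_n \subseteq \Aut(X)_k$. The sketch you offer---restrict to the centralizer $Z_{\Aut(X)}(V)$ and control the tangent spaces $T_{\id_X}W_n$ by the derivations coming from $V$---does not close the gap. The centralizer of $V$ is a closed ind-subgroup but is typically still infinite-dimensional (think of $\Cent_{\Aut(\AA^n)}(\GG_a)$), so passing to it gains nothing. And the Lie-algebra argument is circular: to know that $\Lie(W_n)$ is spanned by the vector fields coming from $V$ you already need some algebraicity or finite-dimensionality of the situation, which is precisely what is at stake. Nothing you have written so far prevents $\dim W_n \to \infty$.

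The actual proof in \cite{CaReXi2023Families-of-commut} proceeds along entirely different lines, using dynamical and arithmetic input (degree growth, Bell--Poonen--Ghioca--Tucker--Zieve type $p$-adic interpolation of iterates, invariant fibrations) rather than purely ind-group-theoretic considerations. The commutativity hypothesis is exploited to force a very rigid structure on the dynamics of elements of $V$, from which the finite-dimensionality of $\overline{\langle V\rangle}$ is extracted; it is not obtained by bounding a chain of $W_n$'s inside a fixed filtration piece. If you want to pursue your approach independently, the missing ingredient is a genuine a priori degree bound for compositions of commuting automorphisms, and that is essentially the content of the cited theorem.
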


As a consequence of this theorem we may write closed commutative connected subgroups
in the automorphism group of an affine variety in a very nice form:

\begin{corollary}
	[{\cite[Remark 1.1]{CaReXi2023Families-of-commut}}]\label{cor.CRX}
	Let $X$ be an affine variety and let 
	$G$ be a closed commutative connected subgroup of $\Aut(X)$. Then $G$ 
	is the countable union of an increasing 
	filtration by  commutative connected algebraic subgroups of $\Aut(X)$.
\end{corollary}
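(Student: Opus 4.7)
The plan is to apply Theorem~\ref{thm.CRX} to carefully chosen irreducible subvarieties of $G$ and then to assemble the resulting algebraic subgroups, using the connectedness of $G$.

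Since $G$ is closed in $\Aut(X)$, the ind-variety filtration $\Aut(X) = \bigcup_n \Aut(X)_n$ restricts to a filtration $G = \bigcup_n G_n$ with $G_n := G \cap \Aut(X)_n$ a finite-dimensional closed subvariety; after reindexing I may assume $\id_X \in G_n$ for all $n$. For each irreducible component $D$ of $G_n$ I consider
\[
V_D := \overline{D \cdot D^{-1}} \subseteq \Aut(X),
\]
an irreducible closed subvariety of $\Aut(X)$ that automatically contains $\id_X = d d^{-1}$ and is contained in $G$. By Theorem~\ref{thm.CRX}, $\langle V_D\rangle$ is an algebraic subgroup of $\Aut(X)$, and since it is generated by an irreducible variety through $\id_X$ consisting of pairwise commuting elements of $G$, it is commutative and connected. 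As $G_n$ has only finitely many irreducible components, the product
\[
H_n := \prod_{D} \langle V_D\rangle \subseteq G
\]
(taken over all irreducible components $D$ of $G_n$) is a commutative connected algebraic subgroup of $\Aut(X)$; clearly $H_n \subseteq H_{n+1}$, since every irreducible component of $G_n$ is contained in an irreducible component of $G_{n+1}$.

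The heart of the argument is to show $G = \bigcup_n H_n$. Let $K_n$ denote the topological connected component of $G_n$ containing $\id_X$ and set $K := \bigcup_n K_n$. The sequence $(K_n)$ is increasing, and for each fixed $m$ the sequence of closed subsets $K_n \cap G_m$ of the Noetherian variety $G_m$ stabilizes and is a union of topological connected components of $G_m$ (if a connected component of $G_m$ meets some $K_n$ then by maximality it is contained in $K_N$ for $N \ge \max(n,m)$). Hence $K$ is clopen in $G$, and the connectedness of $G$ forces $G = K$. It follows that for every $g \in G$ there exist $n$ and a chain of irreducible components $D_1, \ldots, D_s$ of $G_n$ with $\id_X \in D_1$, $g \in D_s$ and $D_i \cap D_{i+1} \neq \emptyset$ for $1 \le i < s$. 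Picking $a_i \in D_i \cap D_{i+1}$ together with $a_0 := \id_X$ and $a_s := g$, the telescoping identity in the commutative group $G$,
\[
g = \prod_{i=1}^{s} a_i a_{i-1}^{-1},
\]
expresses $g$ as a product of factors $a_i a_{i-1}^{-1} \in D_i \cdot D_i^{-1} \subseteq V_{D_i} \subseteq \langle V_{D_i}\rangle \subseteq H_n$, so $g \in H_n$.

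The main obstacle is bridging the purely topological connectedness of the ind-group $G$ with the algebraic input required by Theorem~\ref{thm.CRX}: individual irreducible components of $G_n$ need not contain $\id_X$, nor need a single irreducible subvariety of $G$ contain both $\id_X$ and a given $g$. The trick that resolves both difficulties is to replace each component $D$ with $D \cdot D^{-1}$, which automatically passes through $\id_X$, and then use the above telescoping identity to decompose an arbitrary $g$ into pieces lying in such translated components.
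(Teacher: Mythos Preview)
Your proof is correct, but it takes a different route from the paper's. The paper simply quotes a structural result from \cite{FuKr2018On-the-geometry-of} (Remark~2.2.3 and Proposition~1.6.3) asserting that any closed connected ind-subvariety admits an increasing filtration by \emph{irreducible} closed subvarieties $V_1\subseteq V_2\subseteq\cdots$ all containing $\id_X$, and then applies Theorem~\ref{thm.CRX} to each $V_k$ separately, setting $G_k\coloneqq\langle V_k\rangle$; connectedness of $G_k$ comes from \cite[\S7.5 Proposition]{Hu1975Linear-algebraic-g}. That is the whole argument.

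What you do instead is essentially to reprove the Furter--Kraft lemma by hand: you work with the possibly reducible pieces $G_n=G\cap\Aut(X)_n$, force the identity into each irreducible component via the $D\cdot D^{-1}$ trick, and then use the connectedness of $G$ together with a telescoping product along a chain of overlapping components to show that the resulting algebraic subgroups $H_n$ exhaust $G$. Your approach is more self-contained and makes the role of connectedness completely explicit; the paper's is shorter but leans on an external black box. Both rest on Theorem~\ref{thm.CRX} in exactly the same way.
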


\begin{proof}
	By~\cite[Remark~2.2.3 and Proposition~1.6.3]{FuKr2018On-the-geometry-of} 
	there exists an increasing filtration
	by irreducible subvarieties $V_1 \subseteq V_2 \subseteq \cdots \subseteq G$ such that 
	$G = \bigcup_k V_k$ and $\id_X \in V_k$ for all $k \geq 1$. By Theorem~\ref{thm.CRX},
	the subgroup $G_k \coloneqq \langle V_k \rangle$ of $\Aut(X)$ is algebraic for all $k \geq 1$.
	By \cite[\S7.5 Proposition]{Hu1975Linear-algebraic-g}, $G_k$ is connected for all $k \geq 1$.
	Now, $G_1 \subseteq G_2 \subseteq \cdots \subseteq G$ is our desired filtration.
\end{proof}

Let $H \subseteq \Aut(X)$ be a subgroup, where $X$ is an irreducible affine variety.
A non-trivial commutative unipotent algebraic subgroup $V$ in $\Aut(X)$ is a 
\emph{generalized $H$-root subgroup of weight $\lambda \colon H \to \GG_m$} if for some (and hence every)
isomorphism of algebraic groups $\varepsilon \colon \GG_a^{\dim V} \to V$ we have
\[
h \circ \varepsilon(s) \circ h^{-1} = \varepsilon(\lambda(h) \cdot s) \quad
\textrm{for all $s \in \GG_a^{\dim V}$ and all $h \in H$} \, .
\]	
In case $H$ is an algebraic subgroup of $\Aut(X)$
the abstract group homomorphism $\lambda \colon H \to \GG_m$ is a homomorphism of algebraic groups.
In case $\lambda \colon H \to \GG_m$ is the trivial weight, we say that
$V$ is a \emph{trivial generalized $H$-root subgroup}.
An \emph{$H$-root subgroup} is just a generalized  $H$-root subgroup $V$ where $\dim V = 1$. 

\section{Notation and setup concerning affine toric varieties}
\label{sec.Notations}

Throughout this section, $T$ denotes an algebraic torus of dimension
$n \geq 1$ and $X$ denotes an affine $T$-toric variety, i.e. $X$ is a normal affine variety 
endowed with a faithful $T$-action that admits a dense open orbit.
We see $T$ as an algebraic subgroup of $\Aut(X)$ via the fixed $T$-action on $X$.
Moreover, we fix $x_0$ in the open $T$-orbit of $X$ and denote by
$\eta_{x_0} \colon T \to X$, $t \mapsto t x_0$ the orbit map with respect to $x_0$.
Hence, $\eta_{x_0}$ is an open embedding of $T$ into $X$.

We refer the reader to~\cite{Fu1993Introduction-to-to}
for the basic notions of affine toric varieties.

\subsection{General notions}

Denote by $M = \frak{X}(T)$ 
the character group of $T$ (which is additively written) and by $N \coloneqq \Hom_{\ZZ}(M, \ZZ)$ the group
of one-parameter subgroups of $T$. Moreover, let $M_{\RR} \coloneqq M \otimes_{\ZZ} \RR$, 
$N_{\RR} \coloneqq N \otimes_{\ZZ} \RR = \Hom_{\ZZ}(N, \RR)$ and denote by
\[
M_{\RR} \times N_{\RR} \to \RR \, , \quad (u, v) \mapsto \sprod{u}{v} \coloneqq v(u)
\]
the natural perfect pairing between $M_{\RR}$ and $N_{\RR}$. 
There exists a unique
strongly convex rational polyhedral cone $\sigma \subseteq N_{\RR}$ such that 
its dual $\sigma^\vee \subseteq M_{\RR}$ satisfies 
\[
\kk[X] = \kk[\sigma^\vee \cap M] \subseteq \kk[M] = \kk[T] \, ,
\]
where $\kk[X]$ denotes the coordinate ring of $X$,
$\kk[M]$ denotes the group algebra of $M$ over $\kk$ and the
inclusion $\kk[X] \subseteq \kk[M]$ is given by the comorphism
of the open embedding $\eta_{x_0} \colon T \to X$. Hence, 
$\kk[X]$ is the subalgebra of those functions in $\kk[M]$ that extend to $X$
via the open embedding $\eta_{x_0} \colon T \to X$. 

If $\rho$ is an extremal ray of $\sigma$, then $\rho^\perp \subseteq M_{\RR}$ denotes the subset of 
all $u \in M_{\RR}$ such that $\langle u, v \rangle = 0$ for all $v \in \rho$.

	\subsection{Weights of $T$-root subgroups}
\label{subsec.Notation_Affine_toric_root_subgroups}
%	
%	We consider the set of weights of $T$-root subgroups of $\Aut(X)$:
%	\[
%	D(X) \coloneqq \Bigset{e \in M}{
%			\begin{array}{l}
%				\textrm{there exists a $T$-root subgroup in $\Aut(X)$ of weight $e$}
%				\end{array} 
%		} \, .
%	\]
For an extremal ray $\rho \subseteq \sigma$ 
define
\[
S_\rho \coloneqq \set{m \in (\sigma_\rho)^\vee \cap M}{\sprod{m}{v_\rho} = -1} \subseteq M \, ,
\]
where $\sigma_\rho$ denotes the rational convex polyhedral cone in $N_{\RR}$ spanned by
all the extremal rays of $\sigma$ except $\rho$ and $v_\rho$ denotes the unique primitive
vector inside $\rho \cap N$. 
If $\rho_1, \ldots, \rho_r$ denote the extremal rays 
of $\sigma$, then $S_{\rho_1}, \ldots, S_{\rho_r}$ are pairwise disjoint, the union is equal to 
the set of weights of $T$-root subgroups 
and for every such weight $e$ there exists only one $T$-root subgroup
of weight $e$, see \cite[Lemma~2.6, Theorem~2.7]{Li2010Affine-Bbb-T-varie}.
% and if we denote for an extremal ray $\rho$ of $\sigma$ and $e \in S_\rho$
%\[
%\partial_{\rho, e} \colon \kk[\sigma^\vee \cap M] \to \kk[\sigma^\vee \cap M] \, , \quad
%m \mapsto \sprod{m}{v_\rho} (e + m) \, ,
%\]
%we get a bijection as follows:
%\[
%\begin{array}{ccc}
%	D(X) = \dot\bigcup_{i=1}^r S_{\rho_i} & \to & 
%	\left\{ \,
%	\begin{array}{l}
%		\textrm{$T$-homogeneous locally nilpotent} \\
%		\textrm{derivations of $\kk[\sigma^\vee \cap M]$} \, 
%	\end{array}
%	\right\} \\
%	S_{\rho_i} \ni e & \mapsto & \partial_{\rho_i, e}
%\end{array}
%\]
%(see \cite[Lemma~2.6, Theorem~2.7]{Li2010Affine-Bbb-T-varie}). The right hand set 
%corresponds in a natural way bijectively to the set of root subgroups in $\Aut(X)$ with respect to $T$; see e.g.~\cite[\S1.5]{Fr2017Algebraic-theory-o}. 
%By~\cite[Lemma~10.3]{Ro2014Sums-and-commutato},
%the root subgroups corresponding to weights in $S_\rho$ do commute in $\Aut(X)$.
%
%Note that for an extremal ray $\rho \subseteq \sigma$ the set 
%$S_{\rho}$ is invariant under adding elements from $\sigma^\vee \cap \rho^\bot \cap M$

\subsection{Basic results on affine toric varieties}

In this section we gather some basic results on affine toric varieties, which will be
used later in this paper.

\begin{lemma}
	\label{lem.span_facet}
	Let $\rho$ be an extremal ray of $\sigma$. Then the group generated by
	$\sigma^\vee \cap \rho^\bot \cap M$ is equal to $\rho^\bot \cap M$.
\end{lemma}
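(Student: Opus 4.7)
The plan is to use the standard anti-isomorphism between the face lattices of $\sigma$ and $\sigma^\vee$ together with the elementary observation that any full-dimensional rational polyhedral cone in a lattice generates that lattice as a group.

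First I would dispose of the easy inclusion: since $\rho^\perp \cap M$ is itself a subgroup of $M$, we immediately get $\langle \sigma^\vee \cap \rho^\perp \cap M \rangle \subseteq \rho^\perp \cap M$. All the content is in the reverse inclusion. Set $\tau \coloneqq \sigma^\vee \cap \rho^\perp$. Because $\sigma$ is strongly convex, $\sigma^\vee$ has full dimension in $M_\RR$, and under the standard face duality $F \mapsto F^\perp \cap \sigma^\vee$ between the face lattices of $\sigma$ and $\sigma^\vee$ the one-dimensional face $\rho$ corresponds to a facet of $\sigma^\vee$, which is exactly $\tau$. Thus $\dim \tau = \dim M_\RR - 1 = \dim \rho^\perp$, so $\tau$ is full-dimensional inside the hyperplane $\rho^\perp$.

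Next I would choose a lattice point $m_0 \in M$ lying in the relative interior of $\tau$ (with respect to $\rho^\perp$). Such a point exists because $\tau$ is rational and full-dimensional in $\rho^\perp$: rational points are dense, the relative interior is open and non-empty, and any rational point of the relative interior can be scaled by a positive integer to yield an element of $M$. Then, for an arbitrary $m \in \rho^\perp \cap M$, the vector $m + k m_0$ stays in $\rho^\perp$ for all $k$, and because $m_0$ is a relatively interior point of the cone $\tau$ we have $m + k m_0 \in \tau$ for all sufficiently large positive integers $k$. Therefore
\[
m = (m + k m_0) - k m_0
\]
expresses $m$ as a $\ZZ$-combination of elements of $\sigma^\vee \cap \rho^\perp \cap M$, finishing the proof.

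I do not expect a serious obstacle here; the only point requiring a little care is the dimension assertion that $\tau$ is a facet of $\sigma^\vee$, which in turn requires strong convexity of $\sigma$ (so that the correspondence $F \mapsto F^\perp \cap \sigma^\vee$ is dimension-reversing with $\dim F + \dim(F^\perp \cap \sigma^\vee) = \dim M_\RR$). Once this is in hand, the interior-point trick is entirely standard.
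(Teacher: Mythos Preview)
Your proof is correct and follows essentially the same route as the paper: choose a lattice point in the relative interior of $\sigma^\vee \cap \rho^\perp$ and use it to translate an arbitrary $m \in \rho^\perp \cap M$ into the monoid $\sigma^\vee \cap \rho^\perp \cap M$. The only difference is that you spell out, via face duality, why such an interior lattice point exists, whereas the paper simply asserts it and then verifies $m + t m_1 \in \sigma^\vee$ by checking the pairing against each extremal ray.
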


\begin{proof}
	Let $S \subseteq \rho^\bot \cap M$ be the subgroup generated by $\sigma^\vee \cap \rho^\bot \cap M$.
	Let $m \in \rho^\bot \cap M$. Take 
	$m_1 \in M$ in the relative interior of $\sigma^\vee \cap \rho^\bot$. 
	Then, there exists $t \in \ZZ_{\geq 0}$
	such that $\sprod{m+tm_1}{v_\mu} \geq 0$ for all extremal rays $\mu$ of $\sigma$. 
	Hence, $m+tm_1 \in \sigma^\vee \cap \rho^\bot \cap M \subseteq S$.
	On the other hand $tm_1 \in S$ and thus we get $m \in S$.
\end{proof}

\begin{lemma}
 \label{lem.Simultaneous_kernels}
Let $e_1, \ldots, e_k$ in $M = \frak{X}(T)$, where $k \geq 1$. Then 
\[
K \coloneqq  \bigcap_{i=1}^k \ker(e_i) \simeq \GG_m^{n-r} \times F
\quad \textrm{and} \quad
\frak{X}(K) \simeq M / \Span_{\ZZ} \{e_1, \ldots, e_k \}
\]
for some finite commutative group $F$, where $r = \rank \Span_{\ZZ} \{e_1, \ldots, e_k \}$.
\end{lemma}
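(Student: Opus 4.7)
The plan is to pass to the dual picture via the character anti-equivalence between diagonalizable algebraic groups and finitely generated abelian groups (Cartier duality). First I would consider the algebraic group homomorphism
\[
\varepsilon \colon T \to \GG_m^k \, , \quad t \mapsto (e_1(t), \ldots, e_k(t)) \, ,
\]
whose kernel is exactly $K$ by definition. In particular $K$ is a closed subgroup of the diagonalizable group $T$ and is therefore itself diagonalizable. The comorphism of $\varepsilon$ on character groups is the $\ZZ$-linear map
\[
\varphi \colon \ZZ^k \to M \, , \quad e_i^* \mapsto e_i \, ,
\]
whose image is the sublattice $L \coloneqq \Span_{\ZZ}\{e_1, \ldots, e_k\}$ of rank $r$.

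Next I would invoke the fact that the character functor $G \mapsto \frak{X}(G)$ is an anti-equivalence of categories that turns short exact sequences of diagonalizable groups into short exact sequences of abelian groups with arrows reversed. Factoring $\varepsilon$ through its (scheme-theoretic) image and taking characters identifies $\frak{X}(K)$ with the cokernel of $\varphi$, which yields
\[
\frak{X}(K) \simeq M / L = M / \Span_{\ZZ}\{e_1, \ldots, e_k\} \, ,
\]
proving the second assertion.

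Finally, by the structure theorem for finitely generated abelian groups, $M/L \simeq \ZZ^{n-r} \oplus A$ for some finite abelian group $A$, since $M$ has rank $n$ and $L$ has rank $r$. Applying the anti-equivalence in the opposite direction, we get
\[
K \simeq \Hom(\ZZ^{n-r} \oplus A, \GG_m) \simeq \GG_m^{n-r} \times F
\]
with $F \coloneqq \Hom(A, \GG_m)$, which is a finite abelian group (abstractly isomorphic to $A$). There is no real obstacle here: the argument is essentially a bookkeeping exercise in Cartier duality, and the only thing that needs to be checked carefully is the identification of the cokernel of $\varphi$ with the character group of the kernel of $\varepsilon$, which is standard for diagonalizable groups.
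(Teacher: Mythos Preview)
Your proof is correct and follows essentially the same route as the paper: both consider the homomorphism $T \to \GG_m^k$, $t \mapsto (e_1(t),\ldots,e_k(t))$, pass to character groups to identify $\frak{X}(K)$ with $M/\Span_{\ZZ}\{e_1,\ldots,e_k\}$, and then read off the structure of $K$ from the structure theorem for finitely generated abelian groups. The paper phrases the key step as exactness of $\frak{X}(\GG_m^k) \to M \to \frak{X}(K) \to 0$ rather than invoking the full anti-equivalence, but this is only a difference in language.
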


\begin{proof}
Consider the homomorphism
$e \coloneqq (e_1, \ldots, e_k) \colon T \to \GG_m^k$ of algebraic groups.
The exact sequence $1 \to K \to T \stackrel{e}{\to} \GG_m^k$
induces an exact sequence between character groups
$\frak{X}(\GG_m^k) \stackrel{e^\ast}{\to} M \to \frak{X}(K) \to 0$, as
$\mathfrak{X}(D)$ forms a basis of $\kk[D]$ for all diagonalizable algebraic groups $D$.
Writing $\mathfrak{X}(T) = \ZZ^n$ and $\mathfrak{X}(\GG_m^k) = \ZZ^k$, the homomorphism
$e^\ast$ identifies with
$\ZZ^k \to \ZZ^n$, $(a_1, \ldots, a_k) \mapsto \sum_{i = 1}^k a_i e_i$.
Hence, 
$\mathfrak{X}(K) \simeq M / e^\ast(\mathfrak{X}(\GG_m^k)) = \ZZ^n / \Span_{\ZZ} \{e_1, \ldots, e_k\}$
has rank $n-r$ and thus $\dim K = n-r$.
This implies the statement.
\end{proof}	

\begin{lemma}
\label{lem.Existence_lin_indep_characters}
Let $U$ be a $T$-root subgroup in $\Aut(X)$. 
Then there exist $T$-root subgroups $U_1 \coloneqq U, U_2, \ldots, U_n \subseteq \Aut(X)$
that commute pairwise 
such that the corresponding
weights in $M$ are linearly independent.
\end{lemma}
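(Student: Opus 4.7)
The plan is to locate the weight of $U$ inside some $S_\rho$ (using Sect.~\ref{subsec.Notation_Affine_toric_root_subgroups}), to exhibit $n-1$ further weights lying in the same $S_\rho$ that, together with the weight of $U$, are linearly independent in $M$, and finally to argue that the resulting $T$-root subgroups all commute. The commutation will follow from the well-known description of the associated locally nilpotent derivations.

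More concretely, let $e \in M$ be the weight of $U$. By Sect.~\ref{subsec.Notation_Affine_toric_root_subgroups}, there is an extremal ray $\rho$ of $\sigma$ with $e \in S_\rho$, i.e.~$e \in \sigma_\rho^\vee \cap M$ and $\sprod{e}{v_\rho} = -1$. A short check (if $m \in \rho^\perp \cap \sigma_\rho^\vee$ and $v = v' + tv_\rho \in \sigma$ with $v' \in \sigma_\rho$, $t \geq 0$, then $\sprod{m}{v} = \sprod{m}{v'} \geq 0$) shows
\[
	\sigma_\rho^\vee \cap \rho^\perp \cap M = \sigma^\vee \cap \rho^\perp \cap M \, ,
\]
so by Lemma~\ref{lem.span_facet} the group generated by $\sigma_\rho^\vee \cap \rho^\perp \cap M$ equals $\rho^\perp \cap M$, which is a sublattice of $M$ of rank $n-1$. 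Pick therefore $d_2, \ldots, d_n \in \sigma_\rho^\vee \cap \rho^\perp \cap M$ that are $\ZZ$-linearly independent, and set $e_1 \coloneqq e$ and $e_i \coloneqq e + d_i$ for $i = 2, \ldots, n$. Since $\sigma_\rho^\vee$ is closed under addition and $\sprod{e_i}{v_\rho} = -1$, each $e_i$ lies in $S_\rho$. Pairing any relation $\sum a_i e_i = 0$ with $v_\rho$ gives $\sum a_i = 0$, which then reduces the relation to $\sum_{i \geq 2} a_i d_i = 0$; hence $a_2 = \cdots = a_n = 0$ and consequently $a_1 = 0$. Thus $e_1, \ldots, e_n$ are linearly independent in $M$.

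By Sect.~\ref{subsec.Notation_Affine_toric_root_subgroups} there is, for each $i$, a unique $T$-root subgroup $U_i \subseteq \Aut(X)$ of weight $e_i$, and $U_1 = U$. It remains to show that $U_i$ and $U_j$ commute. Each $U_i$ corresponds to a locally nilpotent derivation $\partial_i$ of $\kk[X] = \kk[\sigma^\vee \cap M]$ which on the eigenbasis is given by $\partial_i(\chi^m) = \sprod{m}{v_\rho}\, \chi^{m + e_i}$. A direct computation yields
\[
	[\partial_i, \partial_j](\chi^m) = \sprod{m}{v_\rho}\bigl(\sprod{m+e_j}{v_\rho} - \sprod{m+e_i}{v_\rho}\bigr)\chi^{m+e_i+e_j} = 0 \, ,
\]
because $\sprod{e_i}{v_\rho} = \sprod{e_j}{v_\rho} = -1$. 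Hence $\partial_i$ and $\partial_j$ commute, and so do the corresponding one-parameter unipotent subgroups $U_i$ and $U_j$.

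The main point that could require care is ensuring that the chosen weights really lie in $S_\rho$ (so that the corresponding $T$-root subgroups exist) while also achieving linear independence together with $e$; this is handled precisely by combining Lemma~\ref{lem.span_facet} with the fact that $e \notin \rho^\perp$, which forces the extra ``transverse'' direction needed to reach rank $n$. The commutativity is then an essentially formal consequence of the LND description and presents no real obstacle.
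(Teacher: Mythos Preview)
Your proof is correct and follows essentially the same route as the paper: both locate the weight $e$ of $U$ in some $S_\rho$, pick $n-1$ linearly independent elements of $\sigma^\vee \cap \rho^\perp \cap M$ via Lemma~\ref{lem.span_facet}, and add them to $e$ to obtain $n$ linearly independent weights in $S_\rho$. The paper leaves the pairwise commutation implicit (it is a standard consequence of all weights lying in the same $S_\rho$), whereas you spell it out via the explicit LND formula; your detour through the equality $\sigma_\rho^\vee \cap \rho^\perp = \sigma^\vee \cap \rho^\perp$ is correct but unnecessary, since one can take the $d_i$ directly in $\sigma^\vee \cap \rho^\perp \cap M \subseteq \sigma_\rho^\vee$.
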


\begin{proof}
Let $e_0 \in S_{\rho}$ be the weight corresponding to $U$. Choose 
$a_2, \ldots, a_{n} \in \sigma^\vee \cap \rho^\bot \cap M$ that are linearly independent;
see Lemma~\ref{lem.span_facet}. Then the elements
\[
e_0 \, , \ e_0 + a_2 \, , \ e_0 + a_3 \, , \ \ldots \, , \ e_0 + a_n \in S_\rho \subseteq M
\]
are linearly independent in $M$. Hence, we may choose $U_i$
as the root subgroup corresponding to $e_0 + a_i \in S_\rho$ for $2 \leq i \leq n$.
\end{proof}

\begin{lemma}
	\label{Lem.Characterize_Torus}
	The affine toric variety $X$ is non-isomorphic to $T$ if and only if $\Aut(X)$
	contains a $T$-root subgroup.
\end{lemma}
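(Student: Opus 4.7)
The plan is to prove the two implications separately, using the explicit parametrization of the weights of $T$-root subgroups from Sect.~\ref{subsec.Notation_Affine_toric_root_subgroups}.

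For the implication from right to left, suppose $X$ is isomorphic to $T$ as a variety; equivalently, $\sigma = \{0\}$. A $T$-root subgroup in $\Aut(X)$ is by definition a one-dimensional non-trivial commutative unipotent algebraic subgroup, hence isomorphic to $\GG_a$. I would prove directly that $\Aut(T)$ contains no non-trivial $\GG_a$-subgroup: since every regular map $\GG_a \to \GG_m$ is constant (because $\mathcal{O}(\GG_a)^{\times} = \kk^{\times}$), every morphism $\GG_a \to T \cong \GG_m^n$ is constant. Applied to the orbit map $s \mapsto \alpha(s, x_0)$ of an algebraic action $\alpha \colon \GG_a \times T \to T$ at an arbitrary $x_0 \in T$, this forces $\alpha(s, x_0) = \alpha(0, x_0) = x_0$ for all $s$, so $\alpha$ is trivial.

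For the implication from left to right, assume $X$ is not isomorphic to $T$; equivalently, $\sigma \neq \{0\}$, so $\sigma$ has at least one extremal ray $\rho$ with primitive generator $v_\rho \in N$. By the classification recalled in Sect.~\ref{subsec.Notation_Affine_toric_root_subgroups}, it suffices to exhibit a single lattice point in $S_\rho$. I would first pick $m_0 \in M$ with $\sprod{m_0}{v_\rho} = -1$, which exists by primitivity of $v_\rho$. If $\rho$ is the only extremal ray of $\sigma$, then $\sigma_\rho = \{0\}$ and $\sigma_\rho^\vee = M_{\RR}$, so $m_0 \in S_\rho$ already. Otherwise, I would produce $m_1 \in \sigma^\vee \cap \rho^\perp \cap M$ with $\sprod{m_1}{v_\mu} > 0$ for every extremal ray $\mu \neq \rho$: strong convexity of $\sigma$ makes $\sigma^\vee$ full-dimensional in $M_{\RR}$, so $\sigma^\vee \cap \rho^\perp$ is a rational facet of $\sigma^\vee$ (dimension $n - 1$) whose relative interior is exactly the locus of such strict inequalities and contains lattice points (using Lemma~\ref{lem.span_facet} for the rationality). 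For $k \in \ZZ_{>0}$ large enough, $m_0 + k m_1 \in M$ satisfies $\sprod{m_0 + k m_1}{v_\rho} = -1$ and $\sprod{m_0 + k m_1}{v_\mu} \geq 0$ for all extremal rays $\mu \neq \rho$, placing it in $S_\rho$.

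The main obstacle I anticipate is producing the lattice point $m_1$ in the relative interior of $\sigma^\vee \cap \rho^\perp$ with strictly positive pairings against every $v_\mu \neq v_\rho$. Its existence over $\RR$ relies on $\sigma^\vee$ being full-dimensional, so that $\sigma^\vee \cap \rho^\perp$ has the full dimension of $\rho^\perp$; descending from $\RR$ to $M$ uses Lemma~\ref{lem.span_facet} and clearing denominators. Once $m_1$ has been secured, what remains is a routine check of the pairing inequalities, and the first implication is essentially a formal consequence of the absence of non-constant units on $\GG_a$.
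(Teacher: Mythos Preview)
Your proof is correct. The paper handles this lemma by citation: it invokes \cite[Remark~2.5]{Li2010Affine-Bbb-T-varie} for the forward direction and declares the reverse ``clear''. You instead give a self-contained argument on both sides. For the reverse direction your orbit-map argument (every morphism $\GG_a \to \GG_m^n$ is constant since $\kk[\GG_a]^\times = \kk^\times$) is exactly the standard justification behind the paper's ``clear''. For the forward direction you unpack what the cited remark asserts, namely that $S_\rho \neq \emptyset$ whenever $\sigma$ has an extremal ray $\rho$: your construction $m_0 + k m_1$ with $m_1$ a lattice point in the relative interior of the facet $\sigma^\vee \cap \rho^\perp$ is the natural combinatorial proof of this (and indeed the relative interior consists precisely of points pairing strictly positively with every $v_\mu$, $\mu \neq \rho$, since $\sigma^\vee \cap \rho^\perp$ is not contained in any $\mu^\perp$). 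So your route is not genuinely different in spirit, just more explicit; the payoff is that the argument becomes independent of the external reference.
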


\begin{proof}
	If $X$ is not isomorphic to $T$, then $\Aut(X)$ contains a $T$-root subgroup
	by \cite[Remark 2.5]{Li2010Affine-Bbb-T-varie}. The reverse direction is clear.
\end{proof}

\begin{lemma}
	\label{Lem.Dense_open_DU-orbit}
	Let $U$ be a $T$-root subgroup of $\Aut(X)$ and let $D \subseteq T$ be the closed subgroup
	of those elements in $T$ that commute with $U$. Then  $D$
	is connected and $DU$ acts with a dense open orbit
	on $X$.
\end{lemma}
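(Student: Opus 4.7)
The plan is to handle the two assertions in turn using the classification of $T$-root subgroups recalled in Section~\ref{subsec.Notation_Affine_toric_root_subgroups}. For the connectedness of $D$, note that the weight $\lambda$ of $U$ corresponds to some $e \in S_\rho$ for an extremal ray $\rho$ of $\sigma$; the defining condition $\sprod{e}{v_\rho} = -1$ combined with primitivity of $v_\rho \in N$ forces $e$ to be primitive in $M$, since any integer factor $> 1$ of $e$ would have to divide $-1$. Applying Lemma~\ref{lem.Simultaneous_kernels} with $k=1$ then yields $D = \ker(e) \simeq \GG_m^{n-1}$, which is in particular connected.

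For the open-orbit claim, I would begin with a dimension count: $D \cap U = \{\id_X\}$ (torus meets unipotent) and $D$ commutes with $U$ by the very definition of $D$, so $DU \simeq D \times U$ is connected of dimension $n = \dim X$. Since every $DU$-orbit has dimension at most $\dim DU = n$, it suffices to exhibit an $x_0 \in X$ with $\dim DU \cdot x_0 = n$; such an orbit is then automatically open, and dense by irreducibility of $X$.

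I would establish this by contradiction: assume $\dim \mathrm{Stab}_{DU}(x_0) \geq 1$ for every $x_0 \in X$, and fix $x_0$ in the open $T$-orbit. Since the $T$-action on that orbit is free, $\mathrm{Stab}_{D}(x_0) = \{\id_X\}$, so the projection $\mathrm{Stab}_{DU}(x_0) \to U$, $du \mapsto u$, is injective; its image is then positive-dimensional and hence equal to $U$ (as $\dim U = 1$), forcing $U \cdot x_0 \subseteq D \cdot x_0$. Writing $u \cdot x_0 = \phi(u) \cdot x_0$ defines a map $\phi \colon U \to D$ that is algebraic (by freeness of the $T$-action) and a group homomorphism (using that $U$ commutes with $D$ and that $D$ is abelian). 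Since there are no non-trivial homomorphisms $\GG_a \to \GG_m^{n-1}$ of algebraic groups, $\phi$ is trivial and $U$ fixes $x_0$. As $x_0$ was an arbitrary point of the open $T$-orbit, $U$ fixes an open dense subset of $X$, and by closedness of the fixed locus $U = \{\id_X\}$, contradicting that $U$ is a non-trivial root subgroup.

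The main obstacle is the last step: converting the dimension-theoretic failure (large stabilizers) into an actual contradiction requires the careful algebraic-group-homomorphism argument, together with the propagation from ``$U$ fixes one point of the open orbit'' to ``$U$ fixes all of $X$'' via closedness of the fixed locus and density of the open $T$-orbit.
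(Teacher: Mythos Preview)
Your argument is correct. The connectedness part matches the paper's proof exactly: both observe that $\sprod{e}{v_\rho}=-1$ forces $e$ to be primitive, so $D=\ker(e)\simeq\GG_m^{n-1}$.

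For the open-orbit claim, your route is genuinely different from the paper's. The paper proceeds via the algebraic quotient: it invokes Liendo's computation $\kk[X]^U=\kk[\sigma^\vee\cap\rho^\perp\cap M]$, uses Lemma~\ref{lem.span_facet} to show that $D$ acts faithfully on $X\aquot U$ (hence with a dense open orbit, since $\dim D=\dim X\aquot U=n-1$), and then lifts this orbit through the generically trivial principal $U$-bundle $X\to X\aquot U$. Your argument bypasses the quotient entirely: you work directly at a point $x_0$ of the open $T$-orbit, use freeness of the $T$-action there to see that $\mathrm{Stab}_{DU}(x_0)$ is the graph of an algebraic homomorphism $\GG_a\to\GG_m^{n-1}$, and conclude triviality of the stabilizer from the nonexistence of such homomorphisms. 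Your approach is more elementary and self-contained (it does not need Lemma~\ref{lem.span_facet} or the description of $\kk[X]^U$), while the paper's approach yields the extra information that $D$ acts faithfully on $X\aquot U$, which is of independent interest. One stylistic point: the phrase ``fix $x_0$'' followed by ``$x_0$ was an arbitrary point'' reads awkwardly; it would be cleaner to say that the argument applies uniformly to every point of the open $T$-orbit.
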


\begin{proof}
	Let $\rho$ be the extremal ray of $\sigma$ such that the $T$-weight $e$ of $U$
	lies inside $S_\rho$. Since $e$ is a primitive vector in $M$ and since
	$D$ is the kernel of $e$, it follows that $D$ is connected.

	By~\cite[Lemma 2.6]{Li2010Affine-Bbb-T-varie} we have that the $\kk$-algebra
	$\kk[X]^U$ of $U$-invariants is equal to $\kk[\sigma^\vee \cap \rho^\bot \cap M]$. 
	As $\sigma^\vee \cap \rho^\bot \cap M$ is a finitely generated semi-group by 
	Gordon's Lemma (see e.g.~\cite[\S1.2, proof of Proposition~1]{Fu1993Introduction-to-to}), it follows
	that $\kk[X]^U$ is a finitely generated $\kk$-algebra.
	
	Let $t \in D$. Then $e(t) = 1$.
	If $t$ acts trivially on $\kk[X]^U$, then we have $u(t) = 1$ for all 
	$u \in \sigma^\vee \cap \rho^\bot \cap M$. By Lemma~\ref{lem.span_facet} we get thus
	$u(t) =1$ for all $u \in \rho^\bot \cap M$. Since $\Span_{\ZZ}( \{e\} \cup (\rho^\bot \cap M)) = M$,
	it follows that $t$ is the identity element in $D$. Hence, $D$ acts faithfully on $\kk[X]^U$
	and thus also on $X \aquot U = \Spec(\kk[X]^U)$.
	
	Since the algebraic quotient $X \to X \aquot U$ restricts to a trivial principal $U$-bundle
	over some open dense subset of $X \aquot U$ (see~e.g.~\cite[\S1.4, Principle 11]{Fr2017Algebraic-theory-o})
	and since $D$ acts with a dense open orbit
	on $X \aquot U$, we get the second statement.
\end{proof}

\section{Results on centralizers in an automorphism group}
\label{Sect.Centralizers}
In this section we study centralizers of certain closed subgroups in the automorphism group of an irreducible affine variety
with a particular emphasis on centralizers of commutative unipotent subgroups. 
The definitions of $\Repl_X(U)$, $\rRepl_X(U)$ and $\A_X(U)$ were given in the introduction.

\begin{proposition}\label{Prop.centralizer}
	Let $X$ be an irreducible affine variety and let $U \subseteq \Aut(X)$ be a commutative unipotent algebraic subgroup.
	Then $\Cent_{\Aut(X)}(\Repl_X(U)) = \A_X(U)$ and $\A_X(U)$ is commutative.
	In particular, $\A_X(U)$ is maximal with respect to be a commutative subgroup of $\Aut(X)$
	and $\Cent_{\Aut(X)}(\A_X(U)) = \A_X(U)$.
\end{proposition}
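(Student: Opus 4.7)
My plan is to reduce the four assertions to the core identity $\Cent_{\Aut(X)}(\Repl_X(U)) = \A_X(U)$ together with commutativity of $\A_X(U)$. First I would check that $\rRepl_X(U) \subseteq \Bir(X)$ consists of pairwise commuting elements. Given $f, f' \in \kk(X)^U$ and $u, u' \in U$, commutativity of $U$ forces the associated one-parameter $\GG_a$-subgroups through $u$ and $u'$ to commute as actions on $X$, while $U$-invariance of $f, f'$ keeps them constant along each other's orbits; a short direct computation then yields $(f \cdot u) \circ (f' \cdot u') = (f' \cdot u') \circ (f \cdot u)$ on a common domain. Consequently $\langle \rRepl_X(U) \rangle$ is commutative in $\Bir(X)$, hence so is $\A_X(U)$, and since $\Repl_X(U) \subseteq \A_X(U)$ this already yields the inclusion $\A_X(U) \subseteq \Cent_{\Aut(X)}(\Repl_X(U))$.

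The main work is the reverse inclusion. Take $\varphi \in \Aut(X)$ centralizing $\Repl_X(U)$; choosing $f \equiv 1$ shows that $\varphi$ commutes with $U$. Set $k := \dim U$. Faithfulness of $U$ on the irreducible affine variety $X$ makes the action generically free. I would then pass to a dense open $V \subseteq X$ on which $V \to V/U$ is a trivial principal $U$-bundle over an affine base $W$, i.e.\ $V \cong W \times U$ with $\kk(W) = \kk(X)^U$; this can be arranged by combining Rosenlicht's theorem with Zariski-local triviality of principal $\GG_a^k$-bundles over affines. In these coordinates, commutation with $U$ pins down $\varphi$ to the shape
\[
\varphi(w, u) = (\bar{\varphi}(w),\, u \cdot \beta(w))
\]
for some birational $\bar{\varphi} \colon W \dashrightarrow W$ and rational $\beta \colon W \dashrightarrow U$. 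Commutation with a typical $f \cdot u_0$, which in these coordinates acts by $(w, u) \mapsto (w,\, \exp(f(w) \log u_0) \cdot u)$, translates into $f \circ \bar{\varphi} = f$ for every $f \in \kk[X]^U$, and this forces $\bar{\varphi} = \id_W$.

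To exhibit $\varphi$ inside $\langle \rRepl_X(U) \rangle$, I would fix a basis $e_1, \ldots, e_k$ of $\Lie(U)$, set $v_i := \exp(e_i) \in U$, and decompose $\log \beta(w) = \sum_{i=1}^k g_i(w)\, e_i$ with $g_i \in \kk(X)^U$. Commutativity of $U$ then gives the factorization $\varphi = (g_1 \cdot v_1) \circ \cdots \circ (g_k \cdot v_k)$ in $\Bir(X)$; since $\varphi \in \Aut(X)$ we conclude $\varphi \in \Aut(X) \cap \langle \rRepl_X(U) \rangle = \A_X(U)$. The three remaining claims are formal consequences: any commutative $H \subseteq \Aut(X)$ with $\A_X(U) \subseteq H$ centralizes $\Repl_X(U) \subseteq \A_X(U)$ and hence lies in $\A_X(U)$, proving maximality; and the chain $\A_X(U) \subseteq \Cent_{\Aut(X)}(\A_X(U)) \subseteq \Cent_{\Aut(X)}(\Repl_X(U)) = \A_X(U)$ yields the last identity.

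The main obstacle I anticipate is the trivialization step: one must simultaneously secure an affine base $W$, a trivial principal $U$-bundle $V \cong W \times U$, and the identification $\kk(W) = \kk(X)^U$, so that commutation relations convert cleanly into pointwise identities in $\bar{\varphi}$ and $\beta$. A related minor subtlety is that $\kk[X]^U$ need not be finitely generated when $k \geq 2$, but working birationally sidesteps this, since all the decisive equations live at the level of function fields.
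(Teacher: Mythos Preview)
Your overall strategy matches the paper's: establish commutativity of $\langle \rRepl_X(U)\rangle$, then for $\varphi$ in the centralizer pass to a local trivialization of the $U$-action, show the induced base map is the identity, and read off a factorization $\varphi=(g_1\cdot v_1)\circ\cdots\circ(g_k\cdot v_k)$. The formal consequences at the end are also handled exactly as in the paper.

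There is, however, a genuine gap in the trivialization step. You assert that ``faithfulness of $U$ on the irreducible affine variety $X$ makes the action generically free'', and then want a principal $U$-bundle $V\to V/U$. This is false for commutative unipotent groups of dimension $\ge 2$. For instance, let $\GG_a^2$ act on $\AA^2$ by $(s,t)\cdot(x,y)=(x+s+ty,\,y)$; this action is faithful (so $\GG_a^2$ embeds as an algebraic subgroup of $\Aut(\AA^2)$), but the stabilizer of every point is one-dimensional, so the action is nowhere free and there is no open set on which it becomes a principal $\GG_a^2$-bundle. In that situation your coordinate picture $V\cong W\times U$ with $k=\dim U$ cannot exist, and the decomposition $\log\beta=\sum_{i=1}^k g_i e_i$ into $\dim U$ many pieces is not available.

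The paper addresses exactly this point. It first proves (via a short locally-nilpotent-derivation computation) that $\varphi^\ast(g)=g$ for all $g\in\kk[X]^U$, so $\varphi$ preserves fibres of $\pi\colon X\to X\aquot U$. Then, using the generic geometric quotient, it gets $\pi^{-1}(B)\cong \AA^n\times B$ with $n$ the generic orbit dimension (not $\dim U$), chooses a point $x_0$ with stabilizer $U_{x_0}$, and picks a \emph{complement} $U'\subseteq U$ with $U=U'+U_{x_0}$ and $U'\cap U_{x_0}=\{0\}$, so $\dim U'=n$. A separate argument (analyzing the incidence variety $E=\{(u',b):u'\rho(0,b)=\rho(0,b)\}$ and using semicontinuity of fibre dimension) shows that after shrinking $B$ the subgroup $U'$ acts freely, and then Zariski's main theorem gives the $U'$-equivariant trivialization $U'\times B\simeq\pi^{-1}(B)$. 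The factorization of $\varphi$ is then obtained using a basis of $U'$; since $U'\subseteq U$, each factor still lies in $\rRepl_X(U)$. Your argument can be repaired along the same lines, but the passage from $U$ to a freely acting $U'$ is an essential step that you are currently missing.
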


As $\A_X(U)$ is a commutative subgroup of $\Aut(X)$ that contains $\Repl_X(U)$ we have
\[
\A_X(U) \subseteq \Cent_{\Aut(X)}(\Repl_X(U)) \, .
\]
Thus we only have to prove the reverse inclusion. For the proof we need the following result
about the generic structure of the algebraic quotient by a unipotent algebraic group action on an irreducible 
affine variety:

\begin{proposition}[{\cite[Proposition~1.6]{GrPf1993Geometric-Quotient}}]		
	\label{Prop.Geometric_quotient}
	Let $U$ be a unipotent algebraic group that acts on an irreducible affine variety $X$ and let 
	$\pi \colon X \to X \aquot U \coloneqq \Spec(\kk[X]^{U})$
	be the algebraic quotient. Then there exists a dense open $U$-invariant subset $X^s \subseteq X$ such that
	$\pi(X^s)$ is an open dense subset in $X \aquot U$, $\pi |_{X^s} \colon X^s \to \pi(X^s)$
	is a geometric quotient for the $U$-action on $X^s$ and the following universal property is satisfied:
	For all open subsets $V \subseteq X \aquot U$ such that
	the restriction 
	\[
	\pi |_{\pi^{-1}(V)} \colon \pi^{-1}(V) \to V
	\]
	is a geometric quotient for the $U$-action on $\pi^{-1}(V)$ we have 
	$\pi^{-1}(V) \subseteq X^s$.
\end{proposition}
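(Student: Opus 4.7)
The plan is to construct $X^s$ as the $\pi$-preimage of the largest open subset of $X \aquot U$ over which $\pi$ restricts to a geometric quotient, starting from Rosenlicht's theorem and using the unipotency of $U$ to match the Rosenlicht quotient with $\Spec(\kk[X]^U)$.

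First I would apply Rosenlicht's theorem to obtain a dense $U$-invariant open subset $X_0 \subseteq X$ admitting a geometric quotient $q_0 \colon X_0 \to Y_0$, where $Y_0$ is a variety with $\kk(Y_0) = \kk(X)^U$. I would then establish the auxiliary fact that for a connected unipotent group $U$ acting on an irreducible affine variety $X$ one has $\kk(X)^U = \operatorname{Frac}(\kk[X]^U)$. Indeed, given $f \in \kk(X)^U$, the fractional ideal $I_f \coloneqq \{\,h \in \kk[X] \mid hf \in \kk[X]\,\}$ is a non-zero $U$-stable ideal and is a union of finite-dimensional $U$-submodules. Since $U$ is unipotent, each such submodule has a non-zero fixed vector by the Lie--Kolchin theorem, so $I_f^U \neq 0$. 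Any non-zero $h_0 \in I_f^U$ satisfies $fh_0 \in \kk[X]$, and $fh_0$ is $U$-invariant since $f$ is; hence $f = (fh_0)/h_0 \in \operatorname{Frac}(\kk[X]^U)$.

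Combining the two, $Y_0$ and $X \aquot U = \Spec(\kk[X]^U)$ share the same function field and so are birational; there exist open subschemes $W_0 \subseteq X \aquot U$ and $Y_0' \subseteq Y_0$ together with an isomorphism $W_0 \simeq Y_0'$ compatible with the quotient maps. Replacing $X_0$ by the dense $U$-invariant open subset $q_0^{-1}(Y_0') \cap \pi^{-1}(W_0)$, the map $\pi$ restricts to a geometric quotient $\pi^{-1}(W_0) \to W_0$. Therefore the collection $\mathcal{W}$ of open subsets $W \subseteq X \aquot U$ for which $\pi|_{\pi^{-1}(W)} \colon \pi^{-1}(W) \to W$ is a geometric quotient is non-empty.

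Next I would verify that $\mathcal{W}$ is closed under arbitrary unions. Each defining axiom of a geometric quotient---surjectivity, the fibres being exactly the $U$-orbits, submersivity, and $\mathcal{O}_W = (\pi_\ast \mathcal{O}_{\pi^{-1}(W)})^U$---is local on the target $W$, hence transfers from the pieces to their union. Setting $W^s \coloneqq \bigcup_{W \in \mathcal{W}} W$ and $X^s \coloneqq \pi^{-1}(W^s)$ then produces the required subset: $X^s$ is $U$-invariant and open in $X$, $\pi(X^s) = W^s$ is open and dense, and $\pi|_{X^s} \colon X^s \to W^s$ is a geometric quotient. The universal property is immediate: any open $V \subseteq X \aquot U$ with $\pi|_{\pi^{-1}(V)}$ a geometric quotient lies in $\mathcal{W}$, so $V \subseteq W^s$ and thus $\pi^{-1}(V) \subseteq X^s$. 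The main obstacle I foresee is the infinite-gluing step, in particular the careful verification that submersivity and the sheaf identification really do transfer from each $\pi^{-1}(W)$ to the union; a secondary subtlety is that by the classical counterexamples to Hilbert's fourteenth problem $\kk[X]^U$ need not be finitely generated, so $X \aquot U$ must be treated as an affine scheme rather than a variety throughout.
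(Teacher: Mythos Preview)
The paper does not prove this proposition at all: it is quoted verbatim as \cite[Proposition~1.6]{GrPf1993Geometric-Quotient} and used as a black box, so there is no ``paper's own proof'' to compare against. Your sketch is therefore not a reconstruction of anything in the paper but an independent attempt at the Greuel--Pfister result.

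As such an attempt, the overall architecture is sound and is essentially the standard one: Rosenlicht gives a geometric quotient generically, the identity $\kk(X)^U=\operatorname{Frac}(\kk[X]^U)$ (your argument via the unipotent fixed-point property on the denominator ideal is correct) forces the Rosenlicht quotient to be birational to $\Spec(\kk[X]^U)$, and then one takes the maximal open over which $\pi$ is a geometric quotient. The gluing step is fine: all four axioms are Zariski-local on the base.

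The one place where your write-up is too quick is the passage from the Rosenlicht quotient $q_0\colon X_0\to Y_0$ to an open $W_0\subseteq X\aquot U$ with $\pi|_{\pi^{-1}(W_0)}$ a geometric quotient. You have a canonical morphism $\alpha\colon Y_0\to X\aquot U$ with $\alpha\circ q_0=\pi|_{X_0}$, and $\alpha$ is birational; but after restricting to an open $V_0\subseteq Y_0$ on which $\alpha$ is an open immersion with image $W_0$, it does not follow automatically that $\pi^{-1}(W_0)=q_0^{-1}(V_0)$, since $\pi^{-1}(W_0)$ may contain points of $X\setminus X_0$. One fixes this by shrinking $W_0$ to a principal open $D(f)$ with $f\in\kk[X]^U$ chosen so that $f$ vanishes on $X\setminus X_0$ (possible because $X\setminus X_0$ is a proper $U$-invariant closed subset and hence, again by unipotency, its ideal contains a non-zero $U$-invariant). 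Your remark about $\kk[X]^U$ possibly not being finitely generated is apt and is exactly why Greuel--Pfister work in the scheme setting.
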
	

\begin{remark}
	\label{Rem.Geometric_quotient}
	By \cite[Remarks~1.7(1)]{GrPf1993Geometric-Quotient} 
	it follows that there exists an open dense subset $V \subseteq X \aquot U$ such that
	$\pi |_{\pi^{-1}(V)} \colon \pi^{-1}(V) \to V$ is a geometric quotient for the $U$-action
	on $\pi^{-1}(V)$.
\end{remark}

\begin{proof}[Proof of Proposition~\ref{Prop.centralizer}]
	As $U$ is a commutative unipotent algebraic group we may identify it with a  $\kk$-vector space where the 
	group-operation is given by addition and the closed subgroups of $U$ are the $\kk$-subspaces.
	
	Let $\varphi \in \Cent_{\Aut(X)}(\Repl_X(U))$. We have to shows that $\varphi \in \A_X(U)$.
	
	\begin{claim}
		\label{Claim.varphi_fixes_fibres}
		For all $g \in \kk[X]^U$ we have $\varphi^\ast(g) = g$. In particular, $\varphi$ fixes the fibres of
		the algebraic quotient $\pi \colon X \to X \aquot U$.
	\end{claim}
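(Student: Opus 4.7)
The strategy is to prove $\varphi^\ast g = g$ for every $g \in \kk[X]^U$; the fibre-fixing statement then follows immediately, since $\kk[X]^U$ is the coordinate ring of $X \aquot U$ and $\pi$ corresponds to the inclusion $\kk[X]^U \hookrightarrow \kk[X]$. The starting observation is that $1 \in \kk[X]^U$, so $1 \cdot u = u$ lies in $\Repl_X(U)$ for every $u \in U$; hence $\varphi$ commutes with each element of $U$. In characteristic zero, $U$ is commutative unipotent and thus carries a $\kk$-vector space structure with $U \simeq \GG_a^k$, in which, for a nontrivial $u \in U$ with associated $\GG_a$-action $\rho$, the one-parameter subgroup $\{\rho(t, \cdot) : t \in \kk\}$ coincides with the line $\{t u : t \in \kk\} \subseteq U$. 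Consequently $\varphi$ commutes with $\rho(t, \cdot)$ for \emph{every} $t \in \kk$, not merely with $u = \rho(1, \cdot)$.

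Next I would unwind the commutation $\varphi \circ (g \cdot u) = (g \cdot u) \circ \varphi$. Written pointwise this is
\[
	\varphi\bigl(\rho(g(x), x)\bigr) = \rho\bigl(g(\varphi(x)), \varphi(x)\bigr) \quad \textrm{for all $x \in X$},
\]
and pulling $\varphi$ through $\rho(g(x), \cdot)$ using the previous step reduces it to
\[
	\rho(g(x), \varphi(x)) = \rho(g(\varphi(x)), \varphi(x)).
\]
To cancel the $\rho$ on both sides, I would invoke generic freeness: since the only closed subgroups of $\GG_a$ are $\{0\}$ and $\GG_a$ itself, the fixed-point set $X^{\GG_a}$ of the nontrivial action $\rho$ is a proper closed subset, and the action is free on the dense open complement. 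Thus on $\varphi^{-1}(X \setminus X^{\GG_a})$, which is dense open, one has $g(x) = g(\varphi(x))$, and equality of the regular functions $g$ and $\varphi^\ast g$ on $X$ follows by irreducibility.

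The main obstacle, as I see it, is the first step: upgrading ``$\varphi$ centralizes $u$'' to ``$\varphi$ centralizes the entire one-parameter subgroup $\rho(\kk, \cdot)$''. The upgrade hinges on the fact that $U$, being commutative unipotent in characteristic zero, already contains this one-parameter subgroup as a $\kk$-subspace, so centralizing $U$ automatically gives more than centralizing a single element. After this is in place, the remainder is a direct pointwise manipulation together with generic freeness of a nontrivial $\GG_a$-action. One small caveat worth flagging: the argument selects a nontrivial $u \in U$, so it tacitly presumes $U \neq \{\id_X\}$, which is the only case in which the surrounding proposition is substantive.
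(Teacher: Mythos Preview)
Your argument is correct, but it proceeds differently from the paper's proof. The paper works entirely with locally nilpotent derivations: picking $u \in U \setminus \{\id_X\}$ with associated derivation $D_u$ and some $h \in \kk[X]$ with $D_u(h) \neq 0$, the commutation of $\varphi$ with $u$ and with $g \cdot u$ translates into $\varphi^\ast$ commuting with $D_u$ and with $gD_u$, and a two-line computation in the integral domain $\kk[X]$ gives
\[
g\,\varphi^\ast(D_u(h)) \;=\; (gD_u)(\varphi^\ast h) \;=\; (\varphi^\ast \circ gD_u)(h) \;=\; \varphi^\ast(g)\,\varphi^\ast(D_u(h)),
\]
whence $g = \varphi^\ast(g)$ by cancellation. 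Your route is geometric and pointwise: you exploit that the entire line $\kk u$ lies in $U \subseteq \Repl_X(U)$ to pull $\varphi$ through $\rho(g(x),\cdot)$, and then cancel via generic freeness of a nontrivial $\GG_a$-action. The paper's version is a cleaner one-liner once the LND language is in place and needs no open-set argument; yours avoids the derivation formalism altogether and makes the role of the containment $\{\rho(t,\cdot)\} \subseteq U$ more explicit. Your caveat about $U \neq \{\id_X\}$ is apt and matches the paper's choice of a nonzero $u$.
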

	
	Indeed, let $D_u$ be the locally nilpotent derivation of $\kk[X]$ associated to some $u \in U \setminus \{ \id_X\}$.
	Hence $D_u$ is non-zero and thus we may choose $h \in \kk[X]$ with $D_u(h) \neq 0$. This gives 
	for all $g \in \kk[X]^U$
	\begin{eqnarray*}
		g \varphi^\ast(D_u(h)) = (gD_u)(\varphi^\ast(h)) = (\varphi^\ast \circ gD_u)(h) = \varphi^\ast(g) \varphi^\ast(D_u(h)) \, ,
	\end{eqnarray*}
	since $\varphi$ commutes with $u$ and $g \cdot u$. Since $\varphi^\ast(D_u(h)) \neq 0$, the claim follows.
	
	\medskip
	
	By Proposition~\ref{Prop.Geometric_quotient} and Remark~\ref{Rem.Geometric_quotient}, 
	there exists an open dense subset
	$B \subseteq X \aquot U$, such that the restriction
	$\pi_0 \coloneqq \pi |_{\pi^{-1}(B)} \colon \pi^{-1}(B) \to B$ is a geometric quotient and there exists an integer $n \geq 0$ and 
	an isomorphism $\rho \colon \AA^n \times B \to \pi^{-1}(B)$ such that the composition 
	$\pi_0 \circ \rho \colon \AA^n \times B \to B$ is 
	the canonical projection to $B$. As $\varphi$ fixes the fibres of $\pi$ (see Claim~\ref{Claim.varphi_fixes_fibres}), it follows that 
	$\varphi(\pi^{-1}(B)) = \pi^{-1}(B)$. By shrinking $B$ we may further assume that $B$ is smooth and affine.
	
	Let $b_0 \in B$, $x_0 = \rho(0, b_0) \in \pi^{-1}(B)$ and denote by $U_{x_0}$ the stabilizer of $U$ in $x_0$. 
	As $U$ is a commutative unipotent group, there  exists a closed subgroup $U' \subseteq U$
	such that $U' + U_{x_0} = U$ and $U' \cap U_{x_0} = \{0\}$. 
	%	Using $\rho$, $\pi^{-1}(B) \to B$ inherits the structure of a (trivial) 
	%	vector bundle from $\AA^n \times B \to B$. Moreover, $U$ as $U$ is commutative and unipotent,
	%	$U$ can be seen as a $\kk$-vector space where the group-action in $U$ is given by addition and every closed subgroup of $U$
	%	is then a $\kk$-sub space of $U$. With theses structures,
	%	\[
	%		 \theta \colon U' \times B \to \pi^{-1}(B) \, , \quad (u', b) \mapsto u' \rho(0, b)
	%	\]
	%	is a homomorphisms of (trivial) vector bundles. As $U' \cap U_{x_0} = 1$ and $U' U_{x_0} = U$, it follows that over $b_0$, the linear map 
	%	$U' \to \pi^{-1}(b_0)$ induced by $\theta$ is an isomorphism.	
	In particular (as $\pi_0$ is a geometric quotient), $\dim U' = n$. Consider the following closed subvariety of $U' \times B$
	\[
	E \coloneqq \set{(u', b) \in U' \times B}{ u' \rho(0, b) = \rho(0, b) }
	\]
	and the projection
	\[
	p \colon E \to B \, , \quad (u', b) \mapsto b \, .
	\]
	Then every fibre of $p$ is irreducible and $p^{-1}(b_0) = \{(0, b_0)\}$.
	Let $d \geq 0$ be the dimension of a general fibre of $p$ (note that $B$ is irreducible and thus such a $d$ exists).
	Let $F \subseteq E$ be an irreducible component of $E$ with $\dim F = \dim E$. Then 
	the general fibre of $p |_F \colon F \to B$ has dimension $d$ and this map is dominant. As all fibres of $p$
	are irreducible, there exists an open dense subset $B' \subseteq B$ with $p^{-1}(B') \subseteq F$.
	Since $\{0\} \times B' \subseteq p^{-1}(B')$ and since $F$ is closed in $E$, we get
	$\{0\} \times B \subseteq F$. Hence, $p^{-1}(b_0) = \{(0, b_0)\} \subseteq F$ and thus $p|_F \colon F \to B$ has at least one fibre of dimension $0$.
	By the semi-continuity of the fibre dimension \cite[Theorem~14.112]{GoWe2020Algebraic-geometry}, 
	it follows that $d = 0$ (since $F$ is irreducible). Hence, we may shrink $B$ in such a way that $E = \{0\} \times B$. This implies that
	\[
	\eta \colon U' \times B \to \pi^{-1}(B) \, , \quad (u', b) \mapsto u' \rho(0, b)
	\]
	is a bijective $U'$-equivariant morphism and since $U' \times B$, $\pi^{-1}(B) \simeq \AA^n \times B$ are irreducible and smooth it follows from
	Zariski's theorem \cite[Corollary~12.88]{GoWe2020Algebraic-geometry} that $\eta$ is in fact a $U'$-equivariant isomorphism.
	
	Then $\psi \coloneqq \eta^{-1} \circ \varphi \circ \eta$ is an automorphism of $U' \times B$ 
	that fixes the fibres of the canonical projection $U' \times B \to B$. Since $\varphi$ commutes with the $U'$-action on $\pi^{-1}(B)$, it follows that 
	$\psi$ commutes with the $U'$-action on $U' \times B$. Write $\psi(u', b) = (\psi_1(u', b), b)$
	for all $(u', b) \in U'\times B$, where $\psi_1 \colon U' \times B \to U'$ is a morphism. 
	Choose a $\kk$-bases $(u_1, \ldots, u_n)$ in  $U'$. Then there exist $f_1, \ldots, f_n \in \kk[B] \subseteq \kk(X)^U$
	such that $\psi_1(0, b) = \sum_{i=1}^n f_i(b) u_i$ for all $b \in B$.
	Since $\eta$ is $U'$-equivariant, we have for all $b \in B$
	\begin{eqnarray*}
		\varphi(\eta(0, b)) =  \eta(\psi(0, b)) &=& \eta \left( \sum_{i=1}^n f_i(b) u_i, b \right) = \left(\sum_{i=1}^n f_i(b) u_i\right) \eta(0, b) \\
		&=& (f_1 \cdot u_1) \circ \ldots \circ (f_n \cdot u_n) (\eta(0, b)) \, .
	\end{eqnarray*}
	Since $\eta$, $\varphi$ and $(f_1 \cdot u_1) \circ \ldots \circ (f_n \cdot u_n)$ are $U'$-equivariant
	and since the image of  $\{0\} \times B$ unde the $U'$-action on $U' \times B$ is the whole $U' \times B$, it follows that
	\[
	\varphi = (f_1 \cdot u_1) \circ \ldots \circ (f_n \cdot u_n) \in \langle \rRepl_X(U) \rangle \, . 
	\]
	By assumption $\varphi \in \Aut(X)$ and thus $\varphi \in \Aut(X)  \cap \langle  \rRepl_X(U)  \rangle= \A_X(U)$.
\end{proof}

\begin{remark}
	\label{Rem.maximal}
	From Proposition~\ref{Prop.centralizer} it follows that  $\A_X(U)$ is closed in $\Aut(X)$ and consists only of unipotent elements. In particular, $\A_X(U)$ is a commutative unipotent subgroup of $\Aut(X)$.
\end{remark}

For future use we insert here the following statement related to the centralizer of $\A_X(U)$
in case $U$ is one-dimensional.

\begin{lemma}
	\label{Lem.Representation_of_on-parameter-subgrp}
	Let $X$ be an irreducible affine variety, $H \subseteq \Aut(X)$ a connected algebraic subgroup that
	acts with a dense open orbit on $X$, 
	and $U \subseteq \Aut(X)$ a non-trivial $H$-root subgroup of weight $\chi \colon H \to \GG_m$.
	Then 
	\[
	U = \Cent_{\Aut(X)}(\ker(\chi) \cup \A_X(U)) 
	\cap \set{ \varphi \in \Aut(X) }{ \varphi^n = h_n \circ \varphi \circ h_n^{-1}, n \in \NN} \, ,
	\]
	where for each $n \in \NN$ we have chosen an element $h_n \in H$ with $\chi(h_n) = n$.
\end{lemma}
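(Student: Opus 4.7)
The plan is to establish both inclusions. The inclusion $U \subseteq$ RHS is immediate: for $h \in \ker(\chi)$, conjugation acts on $U$ by the scalar $\chi(h) = 1$; by Proposition~\ref{Prop.centralizer}, $\A_X(U)$ is commutative and contains $U$ (since $U \subseteq \Repl_X(U) \subseteq \A_X(U)$); and for $u \in U$, $h_n \circ u \circ h_n^{-1} = u^{\chi(h_n)} = u^n$ by the definition of the weight $\chi$.

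For the reverse inclusion, I would fix an isomorphism $\varepsilon \colon \GG_a \simto U$ and set $u_0 \coloneqq \varepsilon(1)$. A direct computation, using the $U$-invariance of $g_1$, gives the multiplication formula
\[
(g_1 \cdot u_0) \circ (g_2 \cdot u_0) = (g_1 + g_2) \cdot u_0 \, , \quad g_1, g_2 \in \kk(X)^U,
\]
so $\rRepl_X(U) = \{g \cdot u_0 : g \in \kk(X)^U\}$ is already a subgroup of $\Bir(X)$ isomorphic to $(\kk(X)^U, +)$; faithfulness of the $U$-action and the absence of non-trivial finite subgroups of $\GG_a$ yield uniqueness of the representing function $g$. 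For $\varphi$ in the RHS, the identity $\Cent_{\Aut(X)}(\A_X(U)) = \A_X(U)$ from Proposition~\ref{Prop.centralizer} gives $\varphi \in \A_X(U)$, so $\varphi = f \cdot u_0$ for a unique $f \in \kk(X)^U$. Using $h \circ \varepsilon(s) \circ h^{-1} = \varepsilon(\chi(h) s)$, I compute
\[
h \circ (f \cdot u_0) \circ h^{-1} = \bigl(\chi(h) \cdot (f \circ h^{-1})\bigr) \cdot u_0 \, , \quad h \in H.
\]
Commutation with $\ker(\chi)$ then forces $f \circ h^{-1} = f$ for all $h \in \ker(\chi)$, while the relation $\varphi^n = h_n \circ \varphi \circ h_n^{-1}$ becomes $n f = n(f \circ h_n^{-1})$, so $f \circ h_n^{-1} = f$ for each $n \in \NN$.

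The main obstacle is to bootstrap from these two partial invariances to $H$-invariance of $f$. Since $\NN \subseteq \chi(H)$ is infinite and $H$ is connected, $\chi$ is surjective onto $\GG_m$; moreover $\chi$ is an open morphism and $\NN$ is Zariski-dense in $\GG_m$, so $\chi^{-1}(\NN) = \bigcup_{n \in \NN} h_n \ker(\chi)$ is Zariski-dense in $H$. The stabilizer $H_f \coloneqq \{h \in H : f \circ h^{-1} = f\}$ is a closed subgroup of $H$ (writing $f = p/q$ turns the condition into the polynomial identity $p(h^{-1}x) q(x) = p(x) q(h^{-1}x)$, which is closed in $h$) and contains $\chi^{-1}(\NN)$, hence $H_f = H$. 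Thus $f \in \kk(X)^H$, and since $H$ acts on $X$ with a dense open orbit we have $\kk(X)^H = \kk$, so $f$ equals some constant $c$ and $\varphi = \varepsilon(c) \in U$.
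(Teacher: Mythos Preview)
Your proof is correct and follows essentially the same route as the paper's. Both arguments use Proposition~\ref{Prop.centralizer} to write $\varphi = f \cdot u_0$ with $f \in \kk(X)^U$, deduce $\ker(\chi)$-invariance of $f$ from commutation, extract $f \circ h_n^{-1} = f$ from the power relation, and then conclude $f$ is $H$-invariant and hence constant since $H$ has a dense orbit. Your write-up is more explicit than the paper's in two places: you spell out the conjugation formula $h \circ (f \cdot u_0) \circ h^{-1} = (\chi(h)\,(f \circ h^{-1})) \cdot u_0$, and you justify the density step via the closed stabilizer $H_f \supseteq \ker(\chi)$ (so that $H_f/\ker(\chi)$ is a closed subgroup of $\GG_m$ containing the infinite set $\NN$), whereas the paper simply asserts that $\ker(\chi)$ together with the $h_n$ generate a dense subgroup of $H$.
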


\begin{proof}
	Obviously, the set on the left hand side is included in the set on the right hand side, so let $\varphi$ be an element of the 
	set of the right hand side.
	By Proposition~\ref{Prop.centralizer} there exists $r \in Q(\kk[X]^U)$ with $\varphi = r \cdot u$,
	where $u \in U \setminus \{ \id_X \}$ is a fixed element (because $U$ is one-dimensional). 
	Since $\varphi$ commutes with $\ker(\chi)$, we get 
	that $r$ is a $\ker(\chi)$-invariant. Moreover, since for all $n \in \NN$ we have
	\[
	n r \cdot u = \varphi^n = h_n \circ \varphi \circ h_n^{-1} = n(h_n^\ast)^{-1}(r) \cdot u
	\]
	we get that $(h_n^\ast)^{-1}(r) = r$ for all $n \in \NN$. Since
	the subgroup of $H$ generated by $h_1, h_2, \ldots$ and $\ker(\chi)$ is dense in $H$, we get that
	$r$ is an $H$-invariant. Using that $H$ acts with a dense orbit on $X$ gives us that $r$ is a constant.
\end{proof}

We will need in the future the following statement about centralizers which is 
a generalization of \cite[Lemma~2.10]{KrReSa2021Is-the-affine-spac}.

\begin{proposition}
	\label{Lem.Centralizer_open_dense_orbit}
	Let $X$ be an irreducible affine variety and let  $H \subseteq \Aut(X)$ be an algebraic
	subgroup such that there exists an open dense $H$-orbit $W$ in $X$. 
	
	\begin{enumerate}[wide=0pt, leftmargin=*]	
		\item \label{Lem.Centralizer_open_dense_orbit0}
		Let $x_0 \in W$ and let $S$ be the stabilizer of $x_0$ in $H$. Then
		\begin{equation}
			\label{Eq.action}
			\tag{$\ast$}	
			(S \backslash N_H(S)) \times W \to W \, , \quad (n, hx_0) \mapsto h n^{-1} x_0
		\end{equation}
		is a well-defined faithful algebraic action on $W$, where $N_H(S)$ is the normalizer of $S$ in $H$
		and $S \backslash N_H(S)$ denotes the quotient group of right $S$-cosets.
		The image of the natural induced injective homomorphism $S \backslash N_H(S) \to \Aut(W)$ is equal to $\Cent_{\Aut(W)}(H)$.
		\item \label{Lem.Centralizer_open_dense_orbit0.5} If $W$ is affine, the homomorphism 
		$\varepsilon \colon \Cent_{\Aut(X)}(H) \to \Cent_{\Aut(W)}(H)$, $\varphi \mapsto \varphi |_W$ 
		is a closed immersion of algebraic groups.
		\item \label{Lem.Centralizer_open_dense_orbit1} 
		If $H$ is commutative, then $\Cent_{\Aut(X)}(H)$ is equal to $H$.
		\item \label{Lem.Centralizer_open_dense_orbit2} 
		If $H$ is unipotent, then $\Cent_{\Aut(X)}(H)$ is unipotent as well.
	\end{enumerate}
\end{proposition}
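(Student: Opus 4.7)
The plan is to prove the four items in order, with (2) building on (1) and both (3) and (4) fed by (1) and (2).

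For (1), I would first verify well-definedness of the formula in (\ref{Eq.action}): changing the representative $h \mapsto hs$ with $s \in S$ preserves $hn^{-1}x_0$ exactly when $nsn^{-1} \in S$, i.e.\ when $n \in N_H(S)$, while replacing $n$ by $sn$ for $s \in S$ also leaves $hn^{-1}x_0$ fixed since $n^{-1}s^{-1}n \in S$ fixes $x_0$. The formula descends from the morphism $N_H(S) \times H \to W$, $(n,h) \mapsto hn^{-1}x_0$, producing an algebraic action; faithfulness is immediate by taking $h = \id_X$. For the identification of the image with $\Cent_{\Aut(W)}(H)$, one inclusion is direct since the centralizer action commutes with the natural $H$-action on $W$. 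Conversely, given $\varphi \in \Cent_{\Aut(W)}(H)$, the image $\varphi(W)$ is open, dense and $H$-invariant, hence equals $W$, so $\varphi(x_0) = n^{-1}x_0$ for some $n \in H$. The commutation $s\varphi(x_0) = \varphi(sx_0) = \varphi(x_0)$ for $s \in S$ yields $n^{-1}sn \in S$, and applying the same argument to $\varphi^{-1}$ gives $n \in N_H(S)$.

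For (2), the restriction $\varepsilon$ is injective by density of $W$ in $X$ combined with separatedness. To upgrade to a closed immersion of algebraic groups, I would use the evaluation morphism $\mathrm{ev}_{x_0} \colon \Aut(X) \to X$, $\varphi \mapsto \varphi(x_0)$, which is a morphism of ind-varieties. Since every $\varphi \in \Cent_{\Aut(X)}(H)$ satisfies $\varphi(W) = W$, the restriction of $\mathrm{ev}_{x_0}$ to $\Cent_{\Aut(X)}(H)$ lands in the orbit $N_H(S)^{-1} \cdot x_0 \subseteq W$; under the identification from (1) this matches $\varepsilon$ composed with the isomorphism $S \backslash N_H(S) \simeq \Cent_{\Aut(W)}(H)$. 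Thus $\varepsilon$ realises $\Cent_{\Aut(X)}(H)$ as a subgroup of the algebraic group $S \backslash N_H(S)$ via an ind-variety morphism into a finite-dimensional affine target. Standard ind-group theory (cf.\ \cite{FuKr2018On-the-geometry-of}) then forces the source to be algebraic of bounded dimension, and an injective homomorphism of algebraic groups is automatically a closed immersion.

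For (3), commutativity of $H$ forces $S$ to act trivially on $W$: for $s \in S$ and $h \in H$ we have $s(hx_0) = hsx_0 = hx_0$. Density of $W$ in $X$ together with the faithfulness of the $H$-action on $X$ yields $S = \{\id_X\}$, so $N_H(S) = H$ and $\Cent_{\Aut(W)}(H) = H$ by (1). The chain of injections $H \subseteq \Cent_{\Aut(X)}(H) \hookrightarrow \Cent_{\Aut(W)}(H) = H$ closes, giving $\Cent_{\Aut(X)}(H) = H$. For (4), orbits of unipotent algebraic groups on affine varieties are closed, so the open dense orbit $W$ equals $X$ and is in particular affine. Part (2) embeds $\Cent_{\Aut(X)}(H)$ as a closed algebraic subgroup of $\Cent_{\Aut(W)}(H) \simeq S \backslash N_H(S)$, a quotient of a closed subgroup of the unipotent group $H$ and therefore unipotent; since closed subgroups of unipotent algebraic groups are unipotent, this finishes the argument. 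The main obstacle, as I see it, is verifying the closed immersion property in (2): injectivity is elementary, but identifying the image with a closed subvariety of $S \backslash N_H(S)$ and transporting this to a genuine algebraic group structure on $\Cent_{\Aut(X)}(H)$ requires careful use of the ind-variety machinery. Once (2) is in place, (3) and (4) follow quickly.
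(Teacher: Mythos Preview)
Your overall plan matches the paper's. Parts~(1), (3), (4) are essentially the same arguments. One minor difference: in (4) you invoke Kostant--Rosenlicht to get $W=X$, whereas the paper only needs that $W$ is affine (which it obtains from Brion's theorem for solvable $H$); both routes work.

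The real divergence is in (2). The paper does \emph{not} use evaluation at a point. Instead it embeds both $\Cent_{\Aut(X)}(H)$ and $\Cent_{\Aut(W)}(H)$ into the ind-variety $\Mor(W,X)$: the first via the closed immersion $\End(X)\to\Mor(W,X)$ (pre-composition with $W\hookrightarrow X$) restricted to the locally closed subset $\Cent_{\Aut(X)}(H)\subseteq\Aut(X)\subseteq\End(X)$, and the second via post-composition. It then shows, using equivariance of the second map under the algebraic group $\Cent_{\Aut(W)}(H)$, that this second map is an isomorphism onto a locally closed image. Since the two embeddings are compatible via $\varepsilon$, one reads off that $\varepsilon$ is a locally closed immersion, hence a closed immersion (images of group homomorphisms are closed).

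Your route via $\mathrm{ev}_{x_0}$ is conceptually cleaner, but the step ``standard ind-group theory then forces the source to be algebraic'' is not as standard as you suggest. An injective morphism of ind-varieties into an algebraic group is not automatically an immersion, and bounding the dimension of the filtration pieces does not by itself force an ind-group to be a variety. To make your argument rigorous you would have to show that $\mathrm{ev}_{x_0}$ restricted to $\Cent_{\Aut(X)}(H)$ is a locally closed \emph{immersion}, not just injective---and proving that essentially requires an argument of the paper's type (or an appeal to a specific result in \cite{FuKr2018On-the-geometry-of} that you would need to locate). You have correctly identified this as the main obstacle; the paper's $\Mor(W,X)$ trick is precisely how it is overcome.
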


\begin{proof}
	In case $H$ is solvable, $W$ is a affine by \cite[Theorem~1]{Br2021Homogeneous-variet}. Hence in~\eqref{Lem.Centralizer_open_dense_orbit0.5},
	\eqref{Lem.Centralizer_open_dense_orbit1}, \eqref{Lem.Centralizer_open_dense_orbit2} $W$ is affine
	and $\Cent_{\Aut(W)}(H)$ is a closed subgroup of the ind-group $\Aut(W)$.
	
	\eqref{Lem.Centralizer_open_dense_orbit0}: 
	Note that there is a short exact sequence of groups
	\[
	1 \to S \to N_H(S) \xrightarrow{n \mapsto (hx_0 \mapsto hn^{-1}x_0)} \Cent_{\Aut(W)}(H) \to 1 \, .
	\]
	As the morphism $(S \backslash N_H(S)) \times H \to W$, $(n, h) \mapsto hn^{-1}x_0$ is invariant under right-multiplication
	by elements from $S$ on $H$ and as $(S \backslash N_H(S)) \times H \to (S \backslash N_H(S)) \times W$, 
	$(n, h) \mapsto (n, h x_0)$
	is the geometric quotient under this action, it follows that~\eqref{Eq.action} is a morphism.
	Hence,~\eqref{Lem.Centralizer_open_dense_orbit0} follows.
	
	\eqref{Lem.Centralizer_open_dense_orbit0.5}:
	Pre-composition with the open immersion 
	$W \subseteq X$ yields a closed immersion of ind-varieties $\End(X) \to \Mor(W, X)$, 
	see e.g~\cite[Proposition~1.10.1(2)]{FuKr2018On-the-geometry-of}. It restricts to a locally closed immersion
	of ind-varieties
	\[
	\eta \colon \Cent_{\Aut(X)}(H) \to \Mor(W, X) \, ,
	\]
	since $\Cent_{\Aut(X)}(H)$ is closed in $\Aut(X)$ and
	$\Aut(X)$ is locally closed in $\End(X)$, see e.g.~\cite[Theorem~5.2.1]{FuKr2018On-the-geometry-of}.
	Similarly, it follows that $\Cent_{\Aut(W)}(H)$ is locally closed in $\End(W)$. As
	post-composition with $W \subseteq X$ yields a morphism of ind-varieties $\End(W) \to \Mor(W, X)$
	(see \cite[Proposition~1.10.1(1)]{FuKr2018On-the-geometry-of}),
	the restriction to $\Cent_{\Aut(W)}(H)$
	\[
	\rho \colon \Cent_{\Aut(W)}(H) \to \Mor(W, X)
	\]
	is a morphism of ind-varieties as well. 
	Note that $\Cent_{\Aut(W)}(H)$ acts by pre-composition on 
	$\Mor(W, X)$ (again by \cite[Proposition~1.10.1(2)]{FuKr2018On-the-geometry-of}). 
	Then $\rho$ is $\Cent_{\Aut(W)}(H)$-equivariant with respect to this action
	and left-multiplication on $\Cent_{\Aut(W)}(H)$. As $\Cent_{\Aut(W)}(H)$ is an algebraic group (by~\eqref{Lem.Centralizer_open_dense_orbit0}),
	this yields that the image of $\rho$ is open in its closure in $\Mor(W, X)$ and that this image is smooth.
	Thus $\rho$ yields an isomorphism onto its locally closed image in $\Mor(W, X)$. Since
	$\rho \circ \varepsilon = \eta$ and since $\eta$ is a locally closed immersion, 
	it follows that $\varepsilon$ is a locally closed immersion of ind-groups. As the image of
	$\varepsilon$ is a locally closed subgroup in the algebraic group $\Cent_{\Aut(W)}(H)$,
	it follows that this image is closed in $\Cent_{\Aut(W)}(H)$
	(see~e.g.~\cite[Proposition~7.4A]{Hu1975Linear-algebraic-g}) 
	and hence~\eqref{Lem.Centralizer_open_dense_orbit0.5} follows.
	
	\eqref{Lem.Centralizer_open_dense_orbit1}:
	As $H$ is commutative, $S$ is equal to the stabilizer
	of any point of $W$. Since $W$ is dense in $X$, we get thus that $S$ is trivial.
	Hence $\Cent_{\Aut(W)}(H)$ is equal to $H$ inside $\Aut(W)$ by~\eqref{Lem.Centralizer_open_dense_orbit0}.
	Again, since $H$ is commutative, we have 
	\[
	H \subseteq \varepsilon(\Cent_{\Aut(X)}(H)) \subseteq \Cent_{\Aut(W)}(H) =
	H \subseteq \Aut(W) \, .
	\]
	\eqref{Lem.Centralizer_open_dense_orbit2}: This
	follows from~\eqref{Lem.Centralizer_open_dense_orbit0}, \eqref{Lem.Centralizer_open_dense_orbit0.5} 
	and the fact that quotients of unipotent algebraic groups are again unipotent.
\end{proof}

\begin{corollary}
	\label{Cor.Knop}
	Let $X$ be a $G$-spherical affine variety for some connected reductive algebraic subgroup $G \subseteq \Aut(X)$.
	Then $\Cent_{\Aut(X)}(G)$ is a diagonalizable algebraic subgroup of $\Aut(X)$.
\end{corollary}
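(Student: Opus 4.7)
The plan is to first show that $\Cent_{\Aut(X)}(G)$ is an algebraic subgroup of $\Aut(X)$ by comparing it with the centralizer of a Borel, and then to embed it as an algebraic group into the diagonalizable quotient $N_G(H_0)/H_0$ attached by Knop's theorem to the open $G$-orbit.

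For algebraicity, I would fix a Borel subgroup $B \subseteq G$. Sphericality gives a dense open $B$-orbit $W \subseteq X$, and since $B$ is solvable, $W$ is affine by \cite[Theorem~1]{Br2021Homogeneous-variet}. Proposition~\ref{Lem.Centralizer_open_dense_orbit}\eqref{Lem.Centralizer_open_dense_orbit0.5} applied to $H = B$ therefore shows that $\Cent_{\Aut(X)}(B)$ is an algebraic subgroup of $\Aut(X)$. Since $\Cent_{\Aut(X)}(G) = \bigcap_{g \in G} \Cent_{\Aut(X)}(g)$ is closed in $\Aut(X)$ and is contained in the algebraic group $\Cent_{\Aut(X)}(B)$, it is itself an algebraic subgroup.

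For diagonalizability, let $\Omega \subseteq X$ be the dense open $G$-orbit (which exists because it contains $W$), pick $x_0 \in \Omega$, and set $H_0 \coloneqq G_{x_0}$, so that $\Omega \cong G/H_0$ is a spherical homogeneous space. Each $\varphi \in \Cent_{\Aut(X)}(G)$ is $G$-equivariant and hence preserves $\Omega$; writing $\varphi(x_0) = g_\varphi \cdot x_0$ for some $g_\varphi \in G$, the identity $\varphi(hx_0) = h\varphi(x_0)$ for $h \in H_0$ forces $g_\varphi \in N_G(H_0)$, so the coset $[g_\varphi] \in N_G(H_0)/H_0$ is well defined. This produces an injective group homomorphism
\[
\Phi \colon \Cent_{\Aut(X)}(G) \to N_G(H_0)/H_0 \, , \quad \varphi \mapsto [g_\varphi] \, ,
\]
which is a morphism of algebraic groups because it factors as the evaluation morphism $\Cent_{\Aut(X)}(G) \to X$, $\varphi \mapsto \varphi(x_0)$, composed with the inverse of the locally closed immersion $N_G(H_0)/H_0 \simto N_G(H_0) \cdot x_0 \hookrightarrow X$.

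To finish, I invoke the classical theorem of Knop that for any spherical homogeneous space $G/H_0$ the quotient $N_G(H_0)/H_0$ is a diagonalizable algebraic group. Since images of morphisms of algebraic groups are closed subgroups and since in characteristic zero a bijective morphism of algebraic groups is an isomorphism, $\Phi$ induces an isomorphism of $\Cent_{\Aut(X)}(G)$ onto a closed subgroup of the diagonalizable group $N_G(H_0)/H_0$, and this closed subgroup is again diagonalizable. The step I expect to be the main obstacle is verifying cleanly that $\Phi$ is genuinely a morphism of algebraic groups (rather than only of abstract groups) and pinning down a precise reference for the diagonalizability of $N_G(H_0)/H_0$ attached to a spherical subgroup.
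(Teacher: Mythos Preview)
Your approach is essentially the paper's: both obtain algebraicity from Proposition~\ref{Lem.Centralizer_open_dense_orbit}\eqref{Lem.Centralizer_open_dense_orbit0.5} applied to a Borel, then embed $\Cent_{\Aut(X)}(G)$ into $N_G(G_{x_0})/G_{x_0}$ and invoke its diagonalizability (the paper's primary reference is Brion--Pauer \cite[Corollaire~5.2]{BrPa1987Valuations-des-esp}, with Knop \cite[Theorem~6.1]{Kn1991The-Luna-Vust-theo} as an alternative).

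One correction: as you have written it, $\Phi$ is an \emph{anti}-homomorphism, since $(\varphi_1\circ\varphi_2)(x_0)=\varphi_1(g_{\varphi_2}x_0)=g_{\varphi_2}g_{\varphi_1}x_0$. The paper avoids this by mapping into the right-coset group $G_{x_0}\backslash N_G(G_{x_0})$ via $n\mapsto(hx_0\mapsto hn^{-1}x_0)$ and then noting that this group is anti-isomorphic to $N_G(G_{x_0})/G_{x_0}$. The slip is harmless for the conclusion because the target turns out to be abelian, but you should either switch to the right-coset convention or postpone the word ``homomorphism'' until after diagonalizability is established.

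Regarding your ``main obstacle'': the paper circumvents the direct verification that $\Phi$ is a morphism by routing everything through the affine $B$-orbit. It uses the closed immersion $\varepsilon\colon\Cent_{\Aut(X)}(B)\to\Cent_{\Aut(Bx_0)}(B)$ already supplied by Proposition~\ref{Lem.Centralizer_open_dense_orbit}\eqref{Lem.Centralizer_open_dense_orbit0.5}, and then observes that $\varepsilon(\Cent_{\Aut(X)}(G))$ lands in the algebraic subgroup $H\subseteq\Aut(Bx_0)$ consisting of those automorphisms that extend $G$-equivariantly to $Gx_0$; Proposition~\ref{Lem.Centralizer_open_dense_orbit}\eqref{Lem.Centralizer_open_dense_orbit0} identifies $H$ with $G_{x_0}\backslash N_G(G_{x_0})$ as an algebraic group. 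Your direct evaluation-at-$x_0$ argument also works, but the paper's detour through $Bx_0$ lets the existing proposition do all the algebraic-geometric bookkeeping.
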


\begin{proof}
	Let $B \subseteq G$ be a Borel subgroup and let $x_0 \in X$ be a point in the open $B$-orbit of $X$.
	By Proposition~\ref{Lem.Centralizer_open_dense_orbit}\eqref{Lem.Centralizer_open_dense_orbit0.5} it follows that
	the restriction to $B x_0$ yields a homomorphism of algebraic groups 
	$\varepsilon \colon \Cent_{\Aut(X)}(B) \to \Cent_{\Aut(B x_0)}(B)$.
	Denote by $G_{x_0}$ the stabilizer of $x_0$ in $G$. Note that $G_{x_0} \backslash N_G(G_{x_0})$
	acts faithfully on $G x_0$ via~\eqref{Eq.action} (see 
	Proposition~\ref{Lem.Centralizer_open_dense_orbit}\eqref{Lem.Centralizer_open_dense_orbit0}). This action leaves
	the open $B$-orbit $B x_0$ invariant. The natural homomorphism 
	$G_{x_0} \backslash N_G(G_{x_0}) \to \Aut(B x_0)$  induced by this action is injective
	and the image $H$ is the algebraic subgroup in $\Aut(B x_0)$ 
	of those automorphisms of $B x_0$ that
	extend to a $G$-equivariant automorphism of $G x_0$. Using~$\varepsilon \colon \Cent_{\Aut(X)}(B) \to \Cent_{\Aut(B x_0)}(B)$, 
	it follows that
	$\Cent_{\Aut(X)}(G)$ is isomorphic to a closed subgroup of $H$.
	By~\cite[Corollaire~5.2]{BrPa1987Valuations-des-esp} (see also \cite[Theorem~6.1]{Kn1991The-Luna-Vust-theo}) 
	the group $N_G(G_{x_0}) / G_{x_0}$ is diagonalizable (as  $\kk$ has characteristic zero). The statement follows thus from the 
	fact that $N_G(G_{x_0}) / G_{x_0}$ and $G_{x_0} \backslash N_G(G_{x_0}) \simeq H $ are anti-isomorphic as 
	algebraic groups.
\end{proof}

\begin{lemma}
	\label{Lem.No_B-root subgroups}
	Let $X$ be an affine $G$-spherical variety for some connected reductive algebraic group $G$
	and let $B \subseteq G$ be a Borel subgroup. Then there is no trivial $B$-root subgroup.
\end{lemma}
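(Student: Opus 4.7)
The plan is to reduce the $B$-equivariant setting to the $G$-equivariant one, so that Corollary~\ref{Cor.Knop} applies directly. Suppose for contradiction that $V \subseteq \Aut(X)$ is a trivial $B$-root subgroup, so $V \cong \GG_a$ is a one-dimensional unipotent subgroup centralized by $B$. Pick any $\phi \in V \setminus \{\id_X\}$; then $\phi$ is unipotent and commutes with every element of $B$. Fix a base point $x_0$ in the open $G$-orbit of $X$.

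The key step will be to promote the $B$-equivariance of $\phi$ to $G$-equivariance on $X$. Consider the morphism
\[
	\Psi \colon G \to X \, , \quad g \mapsto g^{-1}\phi(gx_0) \, .
\]
For $b \in B$ one computes $\Psi(bg) = g^{-1}b^{-1}\phi(bgx_0) = g^{-1}b^{-1}b\phi(gx_0) = \Psi(g)$, so $\Psi$ is invariant under left multiplication by $B$ and descends to a morphism $\bar\Psi \colon B\backslash G \to X$. Since $B\backslash G \cong G/B$ is the complete flag variety and $X$ is affine, $\bar\Psi$ must be constant, equal to $\Psi(e) = \phi(x_0)$. Unwinding this yields $\phi(gx_0) = g\phi(x_0)$ for every $g \in G$, which is $G$-equivariance of $\phi$ on the dense open orbit $Gx_0$; by density the identity $\phi \circ g = g \circ \phi$ of morphisms $X \to X$ holds on all of $X$, so $\phi \in \Cent_{\Aut(X)}(G)$.

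By Corollary~\ref{Cor.Knop} the centralizer $\Cent_{\Aut(X)}(G)$ is a diagonalizable algebraic subgroup of $\Aut(X)$, which in particular contains no non-identity unipotent element. Since $\phi$ is unipotent and different from $\id_X$, this is the desired contradiction. The main subtlety is the constancy of $\bar\Psi$, which rests on the standard ``complete-to-affine is constant'' principle: it uses crucially that $X$ is affine and that $B\backslash G$ (isomorphic to the flag variety $G/B$) is complete.
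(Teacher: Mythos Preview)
Your proof is correct and takes a genuinely different route from the paper's. The paper invokes two results of Arzhantsev--Avdeev: first that a $B$-root subgroup of trivial weight is already a $G$-root subgroup (\cite[Proposition~5.1(b)]{ArAv2022Root-subgroups-on-}), and second that on a spherical variety such a subgroup is horizontal (\cite[Proposition~6.7]{ArAv2022Root-subgroups-on-}), which forces a non-trivial weight on the toric quotient $\Spec(\kk[X]^{R_u(B)})$ via Proposition~\ref{Lem.Centralizer_open_dense_orbit}\eqref{Lem.Centralizer_open_dense_orbit1}. Your argument bypasses both citations by proving directly, via the ``complete-to-affine is constant'' principle applied to $B\backslash G \to X$, that any automorphism centralized by $B$ is already centralized by $G$; Corollary~\ref{Cor.Knop} then finishes immediately. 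Your approach is more elementary and entirely self-contained within the paper, while the paper's route situates the lemma in the broader structure theory of root subgroups on spherical varieties. In effect, your $\Psi$-argument gives a hands-on proof of the special case of \cite[Proposition~5.1(b)]{ArAv2022Root-subgroups-on-} that is needed here.
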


\begin{proof}
	Assume $U \subseteq \Aut(X)$ is a $B$-root subgroup of trivial $B$-weight.
	By \cite[Proposition~5.1(b)]{ArAv2022Root-subgroups-on-} the $B$-root subgroup
	$U$ is in fact a $G$-root subgroup. 
	By \cite[Proposition~6.7]{ArAv2022Root-subgroups-on-} $U$ is horizontal as
	a $B$-root subgroup, i.e.~the locally nilpotent derivation on $\kk[X]^{R_u(B)}$ associated to $U$ 
	is non-trivial, where $R_u(B)$ is the unipotent radical of $B$. 
	However, this implies that the $B$-weight of $U$ is non-trivial 
	(e.g.~by Proposition~\ref{Lem.Centralizer_open_dense_orbit}\eqref{Lem.Centralizer_open_dense_orbit1}
	applied to the toric variety $\Spec(\kk[X]^{R_u(B)})$) 
	and thus we arrive at a contradiction.
\end{proof}

\section{Maximal commutative unipotent subgroups}
\label{Sect.MCUS}

In this section we describe all commutative unipotent subgroups in the automorphism group of an irreducible affine variety 
that are maximal among those subgroups (see Corollary~\ref{Cor.Maximal2}) and we prove that these groups coincide with their centralizers. These results are based on our study of centralizers of commutative 
unipotent subgroups
from Sect.~\ref{Sect.Centralizers}.
As a consequence, we show for irreducible affine varieties $X, Y$ where the ground field $\kk$
is uncountable the following: under a group isomorphism
$\Aut(X) \to \Aut(Y)$, unipotent elements are mapped onto unipotent elements (see Corollary~\ref{Cor.UnipotentToUnipotent}). 

\begin{lemma}
	\label{Cor.Maximal}
	Assume $X$ is an irreducible affine variety and let $G$ be a closed commutative unipotent subgroup of $\Aut(X)$. Then there exists
	a (commutative unipotent) algebraic subgroup $U \subseteq G$ such that $G \subseteq \A_X(U)$ and
	$\kk[X]^U = \kk[X]^G$. 
%and a general $G$-orbit in $X$ has the same dimension as $U$.
	
	%	In particular, if $G$ is maximal with respect to be a closed commutative unipotent subgroup, then $G =\A_X(U)$ for
	%	some algebraic sugroup $U \subseteq G$.
\end{lemma}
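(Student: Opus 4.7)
The plan is to apply Corollary~\ref{cor.CRX} to reduce to an exhaustion of $G$ by commutative connected algebraic subgroups $U_1 \subseteq U_2 \subseteq \cdots$ (each necessarily unipotent, as they lie in $G$ which consists of unipotent elements), show that the invariants $\kk[X]^{U_k}$ stabilise after finitely many steps, and then verify the required commutation by a direct computation.

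First I would show that the decreasing chain of invariant fields $\kk(X)^{U_1} \supseteq \kk(X)^{U_2} \supseteq \cdots$ stabilises inside $\kk(X)$. The transcendence degrees are a non-increasing sequence of non-negative integers bounded by $\dim X$, so stabilise at some index $k_0$. To pass from equality of transcendence degrees to equality of fields, I would invoke the standard fact that for a \emph{connected} algebraic group $H$ acting on an irreducible variety, $\kk(X)^H$ is algebraically closed in $\kk(X)$: the group $H$ permutes the finite set of roots in $\kk(X)$ of the minimal polynomial of any algebraic element, and connectedness forces this permutation action to be trivial. Applying this to each $U_k$ with $k \geq k_0$, the extension $\kk(X)^{U_{k_0}} \supseteq \kk(X)^{U_k}$ is algebraic inside $\kk(X)$, hence the two fields coincide. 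Setting $U \coloneqq U_{k_0}$ and intersecting with $\kk[X]$ (which commutes with passing to invariants, since $\kk[X]^{U_k} = \kk(X)^{U_k} \cap \kk[X]$), we obtain $\kk[X]^U = \kk[X]^{U_k}$ for all $k \geq k_0$, and consequently
\[
\kk[X]^U = \bigcap_{k \geq 1} \kk[X]^{U_k} = \kk[X]^G.
\]

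It remains to show $G \subseteq \A_X(U)$. By Proposition~\ref{Prop.centralizer} one has $\A_X(U) = \Cent_{\Aut(X)}(\Repl_X(U))$, so it suffices to check that every $g \in G$ commutes with every $f \cdot u \in \Repl_X(U)$ where $f \in \kk[X]^U$ and $u \in U$. Fix a $\GG_a$-action $\rho \colon \GG_a \times X \to X$ with $\rho(1,\cdot) = u$. Since $U \subseteq G$ and $G$ is commutative, $g$ commutes with the entire image of $\rho$, hence $g(\rho(s,x)) = \rho(s,g(x))$ for all $s \in \GG_a$ and $x \in X$. Combining this with $g^\ast f = f$ (which holds because $f \in \kk[X]^U = \kk[X]^G$) yields
\[
g\bigl((f \cdot u)(x)\bigr) = g(\rho(f(x), x)) = \rho(f(x), g(x)) = \rho(f(g(x)), g(x)) = (f \cdot u)(g(x))
\]
for every $x \in X$, as required.

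The main obstacle is the stabilisation step: a descending chain of subfields of a finitely generated field extension need not stabilise even if the transcendence degrees do (e.g.~$\kk(t) \supsetneq \kk(t^2) \supsetneq \kk(t^4) \supsetneq \cdots$), so one really has to exploit that each $U_k$ is a \emph{connected} algebraic group acting algebraically on $X$. Once this is in place, the passage from rational to regular invariants and the commutator calculation are straightforward.
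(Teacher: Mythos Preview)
Your proof is correct, and the final commutation step using $\A_X(U)=\Cent_{\Aut(X)}(\Repl_X(U))$ is essentially the same as the paper's (the paper instead checks that $\rRepl_X(U)$ commutes with $G$ and then invokes $\Cent_{\Aut(X)}(\A_X(U))=\A_X(U)$, which is the other half of the same proposition).

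The stabilisation argument, however, is genuinely different. The paper does not use the exhaustion $G=\bigcup_k U_k$ from Corollary~\ref{cor.CRX} directly as the chain; instead it builds a custom chain $U_1\subseteq U_2\subseteq\cdots$ inside $G$ with $\dim U_i=i$ and $\kk[X]^{U_i}\subsetneq\kk[X]^{U_{i-1}}$, and then appeals to \cite[Principle~11(e)]{Fr2017Algebraic-theory-o} to see that the Krull dimension of $\kk[X]^{U_i}$ drops by exactly one at each step, forcing termination. Your route---equal transcendence degrees plus the observation that $\kk(X)^{U_k}$ is algebraically closed in $\kk(X)$ for connected $U_k$, hence the fields coincide---avoids that external reference and is arguably more self-contained. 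On the other hand the paper's construction gives a little more: it produces $U$ with $\dim U$ bounded by the drop in Krull dimension from $\kk[X]$ to $\kk[X]^G$, while your $U=U_{k_0}$ may have much larger dimension. Neither refinement is needed for the lemma as stated.
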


\begin{proof}
	Write $G  = \bigcup_{i \geq 0} G_i$, where $G_i$ is a commutative unipotent algebraic subgroup of $\Aut(X)$,
	see Corollary~\ref{cor.CRX}.
	Take successive algebraic subgroups $U_1 \subseteq U_2 \subseteq \cdots$ inside $G$ such that
	$\dim U_i = i$ and $\kk[X]^{U_i} \subsetneq  \kk[X]^{U_{i-1}}$ for all $i \geq $1. By \cite[Priniciple~11(e)]{Fr2017Algebraic-theory-o}
	we have that the Krull dimension of $\kk[X]^{U_{i-1}}$ is one more than that of $\kk[X]^{U_{i}}$. Hence the sequence
	$U_1 \subseteq U_2 \subseteq \cdots$ stops with some $U \coloneqq U_k$ and $\kk[X]^U = \kk[X]^G$.
	Therefore $\kk(X)^U = Q(\kk[X]^U) = Q(\kk[X]^G)$ and every element $r \cdot u$ with $u \in U$ and $r \in \kk(X)^U$ commutes with $G$.
	This shows that $\A_X(U)$ commutes with $G$. Because $\Cent_{\Aut(X)}(\A_X(U)) = \A_X(U)$ (by Proposition~\ref{Prop.centralizer}) it follows that
	$G \subseteq \A_X(U)$.
\end{proof}

\begin{corollary}
	\label{Cor.Maximal2}
	Assume $X$ is an irreducible affine variety and let $G$ be a commutative unipotent subgroup
	in $\Aut(X)$. Then the following statements are equivalent:
	\begin{enumerate}[label=\alph*), wide=0pt, leftmargin=*]
		\item \label{Cor.Maximal2a} $G$  is maximal among commutative subgroups in $\Aut(X)$;
		\item \label{Cor.Maximal2b} $G$ is maximal among commutative unipotent subgroups in $\Aut(X)$;
		\item \label{Cor.Maximal2c} $G = \A_X(U)$ for some  commutative unipotent algebraic subgroup $U \subseteq \Aut(X)$.
	\end{enumerate}
\end{corollary}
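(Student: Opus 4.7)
The strategy is to establish the cycle of implications (c) $\Rightarrow$ (a) $\Rightarrow$ (b) $\Rightarrow$ (c), where each link becomes a direct consequence of the results already developed in Sections~\ref{Sect.Centralizers} and \ref{Sect.MCUS}. The substantive content has already been done in Proposition~\ref{Prop.centralizer} (which identifies $\A_X(U)$ with its own centralizer) and in Lemma~\ref{Cor.Maximal} (which shows every closed commutative unipotent $G$ sits inside some $\A_X(U)$ with $U \subseteq G$); the corollary is then essentially an assembly step.

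First I would handle (c) $\Rightarrow$ (a), which is immediate: if $G = \A_X(U)$ for a commutative unipotent algebraic $U \subseteq \Aut(X)$, then by Proposition~\ref{Prop.centralizer} we have $\Cent_{\Aut(X)}(\A_X(U)) = \A_X(U)$, so $\A_X(U)$ is maximal among commutative subgroups of $\Aut(X)$; that is, $G$ satisfies \ref{Cor.Maximal2a}. Next, (a) $\Rightarrow$ (b) is purely formal: commutative unipotent subgroups form a subclass of commutative subgroups, so maximality in the larger class implies maximality in the smaller one.

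The main implication is (b) $\Rightarrow$ (c). Here I would use that, by our convention, ``$G$ unipotent'' already includes closedness, so Lemma~\ref{Cor.Maximal} applies directly and furnishes a commutative unipotent algebraic subgroup $U \subseteq G$ with $G \subseteq \A_X(U)$. Now Remark~\ref{Rem.maximal} states that $\A_X(U)$ is itself a commutative unipotent subgroup of $\Aut(X)$ (it is closed and consists of unipotent elements). Since by assumption $G$ is maximal among such, the inclusion $G \subseteq \A_X(U)$ must be an equality, giving \ref{Cor.Maximal2c}.

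There is no genuine obstacle left at the level of this corollary: every technical ingredient—closedness and commutativity of $\A_X(U)$, the coincidence with the centralizer of $\Repl_X(U)$, and the existence of an algebraic $U$ inside any closed commutative unipotent $G$ with matching invariants—has been proved earlier. The only point to watch is that one invokes the definition of ``unipotent subgroup'' (closed $+$ all elements unipotent) in (b) $\Rightarrow$ (c) so that Lemma~\ref{Cor.Maximal} is applicable without further closure arguments.
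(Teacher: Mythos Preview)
Your argument is correct and matches the paper's proof essentially verbatim: the paper proves the same cycle (a) $\Rightarrow$ (b) $\Rightarrow$ (c) $\Rightarrow$ (a), citing Proposition~\ref{Prop.centralizer} for (c) $\Rightarrow$ (a), the obvious inclusion for (a) $\Rightarrow$ (b), and Lemma~\ref{Cor.Maximal} together with Remark~\ref{Rem.maximal} for (b) $\Rightarrow$ (c). The only difference is the starting point of the cycle and the level of detail you provide.
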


\begin{proof}
	\ref{Cor.Maximal2a} $\Rightarrow$ 	\ref{Cor.Maximal2b} is obvious, \ref{Cor.Maximal2b} $\Rightarrow$ 	\ref{Cor.Maximal2c} follows from
	Lemma~\ref{Cor.Maximal} and Remark~\ref{Rem.maximal}, and \ref{Cor.Maximal2c} $\Rightarrow$ 	\ref{Cor.Maximal2a} follows from 
	Proposition~\ref{Prop.centralizer}.
\end{proof}

\begin{corollary}
	\label{Cor.UnipotentToUnipotent}
	Assume that $\kk$ is uncountable. Let
	$X, Y$ be irreducible affine varieties and let $\theta \colon \Aut(X) \to \Aut(Y)$ be a group isomorphism. 
	Then $\theta$ maps unipotent elements to unipotent elements.
\end{corollary}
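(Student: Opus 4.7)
The plan is to invoke Corollary~\ref{Cor.Maximal2} twice, once in $\Aut(X)$ and once in $\Aut(Y)$, in order to transfer the unipotency property through the group isomorphism $\theta$. Let $\varphi \in \Aut(X)$ be unipotent. By definition $\varphi$ lies in the image of some algebraic $\GG_a$-action on $X$, and therefore in a one-dimensional commutative unipotent algebraic subgroup $U \subseteq \Aut(X)$; since $U \subseteq \Repl_X(U) \subseteq \A_X(U)$ (taking $f=1$), we have $\varphi \in \A_X(U)$. By Corollary~\ref{Cor.Maximal2} the subgroup $\A_X(U)$ is maximal among commutative subgroups of $\Aut(X)$, so $G := \theta(\A_X(U))$ is maximal among commutative subgroups of $\Aut(Y)$. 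If $G$ can be shown to consist of unipotent elements, then by Corollary~\ref{Cor.Maximal2}\,(b)$\Leftrightarrow$(c) we will have $G = \A_Y(V)$ for a commutative unipotent algebraic subgroup $V \subseteq \Aut(Y)$, and by Remark~\ref{Rem.maximal} the element $\theta(\varphi) \in G$ will be unipotent, finishing the proof.

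To show that every element of $G$ is unipotent, first observe that $G$ is closed in $\Aut(Y)$ (the Zariski closure of a commutative subgroup is again commutative, so maximal commutativity forces $G$ to be closed), torsion-free as an abstract group (inherited from $\A_X(U)$, whose elements are unipotent, hence of infinite order in characteristic zero), and divisible as an abstract group (by Corollary~\ref{cor.CRX}, every element of $\A_X(U)$ sits inside some $\GG_a^k$-subgroup, which is divisible). Consequently $G^\circ$ is a closed, connected, commutative, torsion-free subgroup of $\Aut(Y)$, and so by the statement recalled in the introduction (via the Jordan decomposition) $G^\circ$ is a unipotent subgroup of $\Aut(Y)$. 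By Lemma~\ref{Cor.Maximal} applied to $G^\circ$, there exists a commutative unipotent algebraic subgroup $V \subseteq G^\circ$ with $G^\circ \subseteq \A_Y(V)$ and $\kk[Y]^V = \kk[Y]^{G^\circ}$.

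The plan now is to show the inclusion $G \subseteq \A_Y(V)$; since both $G$ and $\A_Y(V)$ are maximal commutative, this will force $G = \A_Y(V)$, which is a commutative unipotent subgroup of $\Aut(Y)$. By Proposition~\ref{Prop.centralizer}, $\A_Y(V) = \Cent_{\Aut(Y)}(\Repl_Y(V))$, so it suffices to check that every $\psi \in G$ commutes with every replica $f \cdot v$ with $f \in \kk[Y]^V$ and $v \in V$. Since $V \subseteq G$ and $G$ is commutative, $\psi$ commutes with $v$, hence with the associated $\GG_a$-action (commuting $\GG_a$-actions sharing a generator coincide). A direct calculation with this commutation then reduces the identity $\psi \circ (f \cdot v) = (f \cdot v) \circ \psi$ to $\psi^\ast(f) = f$. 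Therefore the crux of the argument is to show that every $\psi \in G$ acts trivially on $\kk[Y]^{G^\circ} = \kk[Y]^V$.

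The main obstacle is precisely this last point: as $\psi \in G$ only manifestly fixes the smaller ring $\kk[Y]^G$ pointwise, and the quotient $G/G^\circ$ is a countable, torsion-free, divisible abelian group (a countable $\QQ$-vector space) that a priori acts non-trivially on the affine quotient $\Spec(\kk[Y]^{G^\circ})$, the triviality of this action is not automatic. This is precisely where the uncountability hypothesis on $\kk$ must be used: the strategy is to combine the abstract-group divisibility of $G$ with a Baire-category-style argument on the countable decomposition of $G$ into $G^\circ$-cosets, ruling out that the $G/G^\circ$-action on $\Spec(\kk[Y]^{G^\circ})$ is non-trivial, thereby forcing $G \subseteq \A_Y(V)$ and completing the proof.
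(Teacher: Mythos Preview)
Your setup and the first several steps are sound, and you correctly identify that the crux is the relationship between $G = \theta(\A_X(U))$ and $G^\circ$. However, the final paragraph is not a proof: you acknowledge that showing $\psi^\ast(f) = f$ for all $\psi \in G$ and $f \in \kk[Y]^{G^\circ}$ is ``the main obstacle,'' and then only gesture at a ``Baire-category-style argument'' without carrying it out. A countable torsion-free divisible abelian group (i.e.\ a countable $\QQ$-vector space) can perfectly well act nontrivially by automorphisms on an affine variety over an uncountable field, so the abstract group-theoretic properties of $G/G^\circ$ alone do not force the action on $\Spec(\kk[Y]^{G^\circ})$ to be trivial. You need an additional ingredient, and it is not clear what you intend it to be.

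The paper's proof avoids this difficulty by a shorter route. Rather than trying to prove $G \subseteq \A_Y(V)$ via invariant computations, it shows directly that $G^\circ$ is maximal among \emph{connected} commutative subgroups of $\Aut(Y)$: if $H \supseteq G^\circ$ is commutative and connected, then $\theta^{-1}(H)$ is commutative and contains $S \coloneqq \theta^{-1}(G^\circ)$, which has countable index in $\A_X(U)$; since $\A_X(U)$ is connected and $\kk$ is uncountable, $S$ is dense in $\A_X(U)$, hence $\theta^{-1}(H) \subseteq \Cent_{\Aut(X)}(S) = \Cent_{\Aut(X)}(\A_X(U)) = \A_X(U)$, and connectedness of $H$ then gives $H \subseteq G^\circ$. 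Now Corollary~\ref{Cor.Maximal2} (applied to the commutative unipotent subgroup $G^\circ$) upgrades this to maximality of $G^\circ$ among \emph{all} commutative subgroups of $\Aut(Y)$, which immediately forces $G = G^\circ$ and finishes the proof. The uncountability of $\kk$ enters precisely in this density step, not in any analysis of the quotient $G/G^\circ$. Note that once you have your $V$ with $G^\circ \subseteq \A_Y(V)$, this same density argument would also let you conclude: $\A_Y(V)$ is connected commutative and contains $G^\circ$, hence equals $G^\circ$, hence $G^\circ$ is maximal commutative and $G = G^\circ$.
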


\begin{proof}
	Assume $u \in \Aut(X)$ is a unipotent element, i.e.~$U \coloneqq \overline{\langle u \rangle} \simeq \mathbb{G}_a$,
	where the closure is taken inside $\Aut(X)$. 
	%By Corollary~\ref{Cor.Maximal2}, the closed
	%unipotent subgroup $\A_X(U)$ is maximal among commutative subgroups of $\Aut(X)$. 
	Since $\A_X(U)$ is equal to its own centralizer in $\Aut(X)$ (see Proposition~\ref{Prop.centralizer}), 
	the image $\theta(\A_X(U))$ is a closed commutative subgroup of $\Aut(Y)$.
	As $\A_X(U)$ contains no non-trivial element of finite order, the connected component
	$\theta(\A_X(U))^\circ$ is a commutative unipotent subgroup of $\Aut(Y)$.
	
	\begin{claim}
		\label{Claim.maximal}
		The subgroup $\theta(\A_X(U))^\circ$ of $\Aut(Y)$ 
		is maximal among commutative connected subgroups of $\Aut(Y)$.
	\end{claim}
	
	Indeed, let $H$ be a commutative connected subgroup
	of $\Aut(Y)$ that contains  $\theta(\A_X(U))^\circ$.  Then $\theta^{-1}(H)$ is a commutative subgroup
	of $\Aut(X)$ that contains $S \coloneqq \theta^{-1}(\theta(\A_X(U))^\circ)$. As $S$
	has countable index in $\A_X(U)$, since $\A_X(U)$ is connected, and since $\kk$ is uncountable,
	it follows that  $S$ is dense in $\A_X(U)$, see~e.g.~\cite[Corollary~2.2.2]{FuKr2018On-the-geometry-of}. 
	Hence
	\[
	\theta^{-1}(H) \subseteq \Cent_{\Aut(X)}(S) = \Cent_{\Aut(X)}(\A_X(U)) 
	\xlongequal{\textrm{Prop.~\ref{Prop.centralizer}}} \A_X(U) \, .
	\]
	As $H$ is connected we get thus $H \subseteq \theta(\A_X(U))^\circ$.
	This proves Claim~\ref{Claim.maximal}.
	
	\medskip
	
	Using Claim~\ref{Claim.maximal} and Corollary~\ref{Cor.Maximal2} it follows that $\theta(\A_X(U))^\circ$ is maximal among
	commutative
	subgroups of $\Aut(Y)$ and hence $\theta(\A_X(U))^\circ = \theta(R_X(U))$. This implies that
	$\theta(u)$ is unipotent in $\Aut(Y)$.	
\end{proof}
	
\section{Existence result on root subgroups}

In this section we provide a construction for root-subgroups inside the automorphism group of an irreducible affine variety.

\begin{proposition}
	\label{Prop.Existence_of_root_subgroups}
	Let $X$ be an irreducible affine variety and let
	$H \subseteq \Aut(X)$ be a solvable algebraic subgroup with a non-trivial unipotent radical $R_u(H)$.
	Assume that $H = F H^\circ$ for a finite central subgroup $F \subseteq H$.
	
	\begin{enumerate}[wide=0pt, leftmargin=*]
		\item \label{Prop.Existence_of_root_subgroups1}
		Assume there exists a diagonalizable subgroup $D \subseteq H$ of dimension $r$ such that 
		one of the following conditions holds:
		\begin{enumerate}[wide=0pt, leftmargin=*, label=\alph*)]
			\item $D$ commutes with $R_u(H)$ or
			\item $\dim X = r$ and $R_u(H)$ is one-dimensional.
		\end{enumerate}
		Then there exist $H$-root subgroups $U_1, \ldots, U_r \subseteq \Aut(X)$ of
		linearly independent $D$-weights $\lambda_1, \ldots, \lambda_{r} \colon D \to \GG_m$.
		\item \label{Prop.Existence_of_root_subgroups2}
		Let $T \subseteq H$ be a maximal torus and let $T_1, \ldots, T_n \subseteq T$ be non-trivial subtori.
		Then there exists a $T$-root subgroup in $\Aut(X)$ that 
		doesn't commute with $T_i$ for all $i = 1, \ldots, n$.
	\end{enumerate}
\end{proposition}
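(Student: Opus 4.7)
The plan is to construct every needed root subgroup inside the centralizer $\A_X(U_0)$ from Proposition~\ref{Prop.centralizer}, where $U_0 \subseteq Z(R_u(H))$ is a carefully chosen one-dimensional commutative unipotent subgroup normalized by $H$. Such a $U_0$ exists because $H$ acts on the non-zero $\kk$-vector space $\Lie Z(R_u(H))$ with a one-dimensional stable subspace (the solvability of $H$ and the finite central $F$ make simultaneous triangularization possible). By Proposition~\ref{Prop.centralizer}, every element of $\A_X(U_0)$ has the form $f \cdot u_0$ with $f \in \kk[X]^{U_0}$, so for each eigenvector $f \in R$ (with $R$ a suitably chosen subring of invariants) the subgroup $V_f = \{\, tf \cdot u_0 \mid t \in \kk\,\}$ is a one-dimensional commutative unipotent algebraic subgroup, whose weight is the sum of the weight of $U_0$ and the weight of $f$.

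For Part~(1), I would take $R = \kk[X]^{R_u(H)}$ in case~(a) (so that the $D$-weight of $u_0$ vanishes, since $D$ centralizes $R_u(H)$) and $R = \kk[X]^{U_0}$ in case~(b), where $U_0 = R_u(H)$. The central step will be to show that $D$ acts faithfully on $R$: if $d \in D \setminus \{\id_X\}$ acted trivially on $R$, then $d$ would stabilize every $U_0$-orbit set-wise, and on a generic one-dimensional $U_0$-orbit the $d$-action would be an affine transformation with linear part $\lambda_0(d)$ (where $\lambda_0$ is the $D$-weight of $U_0$); combining this with the semisimplicity of $d$ and the faithfulness of $H \subseteq \Aut(X)$ forces $d = \id_X$. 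Faithfulness then implies that the $D$-weights appearing in $R$ generate $\mathfrak{X}(D)$ as a group. In case~(a) any $r$ linearly independent such weights give the desired subgroups; in case~(b) one further needs the shifts $\lambda_0 + \mu_i$ to remain linearly independent, which amounts to avoiding a single linear relation and is easily arranged since the set of weights is a submonoid closed under addition. Diagonalizing simultaneously over the finite central $F$ produces $(DF)$-eigenvectors $f_i \in R$, and the subgroups $V_{f_i}$ are normalized by $R_u(H)$ (because $f_i$ is $R_u(H)$-invariant and $u_0 \in Z(R_u(H))$) and by $DF$ with the prescribed weight, hence by all of $H$.

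For Part~(2), I would apply the same construction with $D$ replaced by the maximal torus $T$ to produce $T$-root subgroups in $\A_X(U_0)$ of every weight $\lambda_0 + \mu$, with $\mu$ ranging over the submonoid $W \subseteq \mathfrak{X}(T)$ of $T$-weights appearing in $\kk[X]^{U_0}$. Let $T' \subseteq T$ be the kernel of the $T$-action on $\kk[X]^{U_0}$; the same orbit-wise analysis shows that $\lambda_0|_{T'}$ is injective, so $\dim T' \leq 1$ and $\lambda_0|_{T_i}$ is non-trivial whenever $T_i \subseteq T'$. For such indices $i$ the root subgroup automatically does not commute with $T_i$, while for the remaining ones (with $T_i \not\subseteq T'$) the set of $\mu \in W$ for which $(\lambda_0 + \mu)|_{T_i} = 0$ is contained in a coset of a proper subgroup of $\mathfrak{X}(T/T')$. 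Since $W$ generates $\mathfrak{X}(T/T')$ as a group and is a submonoid, a growth argument (a submonoid generating a finitely generated free abelian group has positive density in a full-dimensional cone, so it cannot be covered by finitely many cosets of proper subgroups) produces $\mu \in W$ outside this finite union, yielding the desired $T$-root subgroup.

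The main obstacle I anticipate is the faithfulness of the relevant diagonalizable group on the invariant ring, since this is the only point where the extra hypotheses (a) or (b) in Part~(1) are actually used, and the orbit-wise argument requires careful bookkeeping about when a diagonalizable automorphism that commutes with or normalizes a $\GG_a$-action on $\AA^1$ can be non-trivial.
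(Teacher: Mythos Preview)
Your strategy coincides with the paper's: fix a one-dimensional $U_0$ central in $R_u(H)$ and normalized by $H$, then build $H$-root subgroups of the form $f \cdot U_0$ for suitable semi-invariants $f$. Two points, however, need repair.

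First, your ``central step'' fails in case~(1)(b): $D$ does \emph{not} act faithfully on $R = \kk[X]^{U_0}$, since $\dim X \aquot U_0 = r-1 < r = \dim D$, so the $D$-weights in $R$ span only a corank-one sublattice of $\frak{X}(D)$. Your own orbit argument shows this: if $\lambda_0(d) \neq 1$ then $d$ acts on a generic $U_0$-orbit as a non-trivial homothety while fixing $R$ pointwise. The shift by $\lambda_0$ does rescue the construction (take $\mu_1 = 0$ and $\mu_2,\dots,\mu_r$ spanning that corank-one sublattice; then $\lambda_0, \lambda_0+\mu_2,\dots,\lambda_0+\mu_r$ are independent because $\lambda_0$ lies outside it), but this is not ``avoiding a single linear relation'' among weights that already generate $\frak{X}(D)$. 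The paper sidesteps the issue by noting that in case~(b) one has $H = D^\circ U_0$ with $X$ a $D^\circ$-toric variety and invoking Lemma~\ref{lem.Existence_lin_indep_characters} directly. Relatedly, in case~(a) your orbit argument refers to $U_0$-orbits, but an element trivial on $\kk[X]^{R_u(H)}$ only preserves $R_u(H)$-orbits; the paper argues instead via Proposition~\ref{Lem.Centralizer_open_dense_orbit}\eqref{Lem.Centralizer_open_dense_orbit2} that a finite-order element commuting with $R_u(H)$ and stabilizing such an orbit must act trivially on it.

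Second, for $V_{f_i} = f_i \cdot U_0$ to be an $H$-root subgroup, $f_i$ must be semi-invariant for a full diagonalizable complement $L$ with $H = L\,R_u(H)$, not merely for $DF$: if the maximal torus of $H^\circ$ strictly contains $D^\circ$, a $DF$-eigenvector need not be $L$-semi-invariant, and then $f_i \cdot U_0$ is not normalized by $H$. The paper first fixes such an $L$, chooses the $f_i$ among $L$-semi-invariants in $\kk[X]^{R_u(H)}$, and only then reads off their $D$-weights. Your argument for Part~(2) is sound; the paper's version is the same idea but parametrizes over integer exponents $(a_1,\dots,a_m)$ of finitely many chosen $T$-semi-invariants, replacing your density argument in the weight monoid by the simpler observation that finitely many affine hyperplanes cannot cover $\ZZ^m$.
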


\begin{proof}
	The following claim turns out to be very useful for the proof:
	
	\begin{claim}
		\label{Claim.D_acts_faithfully_on_the_quotient}
		Let $V \subseteq R_u(H)$ be a closed subgroup and let 
		$E \subseteq H$ be a diagonalizable subgroup that commutes with $V$. Then 
		$E$ acts faithfully on $X\aquot V$.
	\end{claim}
	
	Indeed, denote by $\pi \colon X \to  X\aquot V$ the algebraic quotient with respect to the $V$-action on $X$,
	let $X^s \subseteq X$ be the open dense subset
	of Proposition~\ref{Prop.Geometric_quotient} and let $\pi' \coloneqq \pi |_{X^s} \colon X^s \to \pi(X^s)$ be the induced geometric quotient. We show now, that the subgroup of elements of finite order of $E$
	acts faithfully on $X \aquot V$. This will imply
	Claim~\ref{Claim.D_acts_faithfully_on_the_quotient} as $E$ is diagonalizable.
	
	So, let $e \in E$ be an element of finite order that acts trivially on 
	$X \aquot V$. By assumption it commutes
	with the $V$-action on $X$ and in particular it leaves $X^s$ and $\pi(X^s)$ invariant.
	As $e$ acts trivially on $X \aquot V$, it leaves every fibre of $\pi'$ invariant. We apply Proposition~\ref{Lem.Centralizer_open_dense_orbit}\eqref{Lem.Centralizer_open_dense_orbit2} 
	to each fibre of $\pi'$ in order to conclude that
	$e$ acts trivially on each fibre of $\pi'$. Since $X^s$ is dense in $X$, it follows that $e$ acts trivially on $X$ and hence $e$ is trivial (as $E$ acts faithfully on $X$).
	
	\medskip
	
	We fix an $H^\circ$-root subgroup $U$ in the center of $R_u(H)$. 
	Since $H = F H^\circ$ for a finite central subgroup $F$ in $H$, it follows that $U$ is 
	an $H$-root subgroup of $R_u(H)$.
	Now, we prove~\eqref{Prop.Existence_of_root_subgroups1} and
	\eqref{Prop.Existence_of_root_subgroups2} separately:
	
	\medskip
	
	\eqref{Prop.Existence_of_root_subgroups1}:
	If $\dim X = r$ and $R_u(H)$ is one-dimensional, then $X$ is $D^\circ$-toric and thus~\eqref{Prop.Existence_of_root_subgroups1} 
	holds by Lemma \ref{lem.Existence_lin_indep_characters} applied to the $D^\circ$-root subgroup $U = R_u(H)$
	(note that in this case $H = H^\circ = D^\circ U$ by Proposition~\ref{Lem.Centralizer_open_dense_orbit}\eqref{Lem.Centralizer_open_dense_orbit1} applied to $D^\circ$, 
	as $F$ is central in $H$).
	Hence, we may assume that
	$D$ commutes with $R_u(H)$.
	We apply Claim~\ref{Claim.D_acts_faithfully_on_the_quotient} to $V = R_u(H)$ and 
	$E = D$ in order to get 
	\begin{equation}
		\label{Eq.DImension_bound}
		\dim X \aquot R_u(H) - \dim (X \aquot R_u(H))\aquot D  \geq \dim D = r \, .
	\end{equation}
	Let $L \subseteq H$ be a diagonalizable subgroup of $H$ such that $H = L R_u(H)$
	(in fact, one can choose a maximal subtorus $T$ of $H^\circ$ and take $L = F T$).
	Then $\kk[X]^{R_u(H)}$ is generated as a $\kk$-vector space by 
	$L$-semi-invariants of $\kk[X]^{R_u(H)}$. 
	As every $L$-semi-invariant of $\kk[X]^{R_u(H)}$ is an $H$-semi-invariant,
	by using~\eqref{Eq.DImension_bound} we get
	$H$-semi-invariant functions $f_1, \ldots, f_r \in \kk[X]^{R_u(H)}$ 
	that are algebraically independent over the quotient field of $\kk[X]^{DR_u(H)}$.
	Denote by $\lambda_1,\ldots, \lambda_r \colon D \to \GG_m$
	the $D$-weights of $f_1, \ldots, f_r$, respectively. If $\sum_{i=0}^r a_i \lambda_i = 0$ for integers
	$a_1, \ldots, a_r$, then $\prod_{i=1}^r f_i^{a_i}$ is a $DR_u(H)$-invariant and hence $a_1 = \ldots = a_r = 0$.
	We proved that $\lambda_1, \ldots, \lambda_r$ are linearly independent in the character group $\frak{X}(D)$.
	For $i = 1, \ldots, r$, let $U_i \coloneqq f_i \cdot U =  \set{f_i \cdot u}{u \in U}$. Hence $U_i$ is an $H$-root subgroup of 
	$D$-weight $\lambda_i$ since $D$ and $U$ commute. 
	%	Moreover, $f_{r+1} \coloneqq f_1 \cdots f_r$ is a $H$-semi-invariant
	%	of $D$-weight $\lambda_1 + \ldots + \lambda_r$ and we may choose $U_{r+1} \coloneqq f_{r+1} \cdot U$.
	This proves~\eqref{Prop.Existence_of_root_subgroups1}.
	
	\medskip
	
	\eqref{Prop.Existence_of_root_subgroups2}:
	After renumbering the $T_i$, we may assume that there exists $m \in \{0, 1, \ldots, n\}$
	such that $T_i$ commutes with $U$ if and only if $1 \leq i \leq m$. If $m = 0$, then~\eqref{Prop.Existence_of_root_subgroups2} is satisfied and thus we may assume $m \geq 1$.
	For each $1 \leq i \leq m$,
	the torus $T_i$ acts faithfully on $X\aquot U$, by Claim~\ref{Claim.D_acts_faithfully_on_the_quotient}
	applied to $V = U$ and $E = T_i$.
	Hence, for $1 \leq i \leq m$ we may choose a $T$-semi-invariant $f_i \in \kk[X]^{U}$, such that $f_i$ is not a $T_i$-invariant
	(otherwise every element in $\kk[X]^{U}$ would be $T_i$-invariant, as $\kk[X]^{U}$ is
	$\kk$-linearly spanned by its $T$-semi-invariants). For $1 \leq i \leq m$ let $\lambda_i \colon T \to \GG_m$
	be the $T$-weight of $f_i$ and let $\lambda \colon T \to \GG_m$ be the $T$-weight of $U$.
	For $i = 1, \ldots, n$, let
	\[
	A_i \coloneqq \Bigset{(a_1, \ldots, a_m) \in \ZZ^m}{\left(\sum_{j=1}^m a_j \lambda_j |_{T_i} \right) + \lambda|_{T_i} = 0}
	\subseteq \ZZ^m \, .
	\]
	Since $\lambda_i |_{T_i} \neq 0$ for all $1 \leq i  \leq m$ and $\lambda |_{T_i} \neq 0$ for all $m < i \leq n$,
	it follows that $A_i$ is either a translate of a rank $(m-1)$-subgroup of $\ZZ^m$ or it  is empty. In particular there
	exists $(a_1, \ldots, a_m) \in \ZZ^m \setminus \bigcup_{i=1}^n A_i$. Consider the $T$-semi-invariant
	\[
	f \coloneqq \prod_{j=1}^m f_j^{a_j} \in \kk[X]^{U}
	\]
	and the $T$-root subgroup $f \cdot U$ of $\Aut(X)$. 
	Since for all $1 \leq i \leq n$, 
	the $T_i$-weight of $f \cdot U$ is equal to $(\sum_{j=1}^m a_j \lambda_j |_{T_i}) + \lambda|_{T_i} \neq 0$,
	it follows that $T_i$ does not commute with $f \cdot U$. 
	This proves~\eqref{Prop.Existence_of_root_subgroups2}.
\end{proof}

\section{Abstract group isomorphisms of algebraic groups}	

In this section, we provide results stating that an abstract group isomorphism of certain algebraic groups
is an isomorphism of algebraic groups up to field automorphisms. In order to do so, we have to introduce the language of base-change via a field automorphism.

\medskip

If $\tau \colon \kk \to \kk$ is a field automorphism
and $X$ is a variety (defined over $\kk$), then $X^{\tau}$ is the unique variety (defined over $\kk$)
such that the following diagram is a pullback diagram
\begin{equation}
	\label{Eq.Def_tau}
	\begin{gathered}
		\xymatrix@=20pt{
			X^\tau \ar[d] \ar[rr]^-{\tau_X} && X \ar[d] \\
			\Spec(\kk) \ar[rr]^-{\Spec(\tau)} && \Spec(\kk) \, ,
		}
	\end{gathered}
\end{equation}
where $\tau_X$ is defined by the pullback diagram (it is not a morphism of $\kk$-varieties!).
Note, if $X' \subseteq X$ is a closed subvariety, then $(X')^\tau = (\tau_X)^{-1}(X')$ is a closed subvariety of $X$. Moreover, if $f \colon X \to Y$ is a morphism of varieties, 
then there exists a unique morphism of varieties $f^\tau \colon X^\tau \to Y^\tau$ such that
$\tau_Y \circ f^\tau = f \circ \tau_X$. The assignment $X \mapsto X^\tau$, $f \mapsto f^\tau$
gives a self-equivalence of the category of varieties.  

\begin{remark}
	\label{Rem.Xtau_X_def_over_Q}
	It may happen, that $X^\tau$ and $X$ are isomorphic  (as varieties over $\kk$) and in this case we simply denote
	$X^\tau$ by $X$. This happens to all varieties $X$ that are defined over $\QQ$, in particular to all toric varieties.
\end{remark}

\begin{remark}
	\label{Rem.Field_Automorphism}
	Let $X$ be an affine variety and let $\tau$ be a field automorphism of $\kk$. 
	For each algebraic subgroup $G \subseteq \Aut(X)$ that corresponds to
	a faithful $G$-action $\rho \colon G \times X \to X$, the map
	\[
	\rho^\tau \colon G^\tau \times X^\tau \xrightarrow{\tau_G \times \tau_X} G \times X \xrightarrow{\rho} X 
	\xrightarrow{(\tau_X)^{-1}} X^\tau
	\]
	is a faithful algebraic $G^\tau$-action  on $X^\tau$. In this way, we see $G^\tau$ as an algebraic subgroup
	of $\Aut(X^\tau)$ and then we have the following commutative 
	diagram of (abstract) groups, where the horizontal arrows are isomorphisms.
	\[
	\xymatrix@R=10pt{
		\Aut(X^\tau) \ar[rrr]^-{\varphi \mapsto \tau_X \circ \varphi \circ (\tau_X)^{-1}}_{\psi^\tau \mapsfrom \psi} 
		&&& \Aut(X) \\
		G^{\tau} \ar[rrr]^-{\tau_G} \ar@{}[u]|-{\bigcup} &&& G \ar@{}[u]|-{\bigcup}
	} 
	\]
	
\end{remark}

\begin{definition}
	Let $X, Y$ be varieties.
	Assume that $\eta \colon X \to Y$ is a map on the sets of $\kk$-points
	and assume there exists a field automorphism $\tau$ of $\kk$ such that the map on the sets of $\kk$-points
	$\eta \circ \tau_X \colon X^\tau \to Y$ is equal to the underlying map on the sets of $\kk$-points 
	of a morphism of varieties $X^\tau \to Y$. 
	If this is the case, we simply say that $\eta \circ \tau_X \colon X^\tau \to Y$ is a \emph{morphism of varieties}.
\end{definition}

\begin{lemma}
	\label{Lem.Same_field_auto}
	Let $H, H'$ be non-finite algebraic groups and let $\eta \colon H \to H'$ be an abstract group isomorphism.
	For $i = 1, 2$, let $G_i$ and $G_i'$ be algebraic groups that contain $H$ and $H'$ as algebraic subgroups, respectively.
	Moreover, let $\eta_i \colon G_i \to G_i'$ be an abstract group isomorphism that restricts to $\eta$ on $H$
	and let $\tau_i$ be a field automorphism of $\kk$
	such that $\eta_i \circ (\tau_i)_{G_i} \colon G_i^{\tau_i} \to G_i'$ is an
	isomorphism of algebraic groups. Then $\tau_1 = \tau_2$.
\end{lemma}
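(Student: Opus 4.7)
The plan is to reduce everything to the behaviour of $\mu := \tau_2^{-1} \circ \tau_1$ on the $\kk$-points of a single one-parameter subgroup of $H$.

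First I would pass from $G_i$ to $H$. Since $H$ is a closed subgroup of $G_i$, the base change $H^{\tau_i}$ is a closed subgroup of $G_i^{\tau_i}$, and the restriction of the isomorphism of algebraic groups $\eta_i \circ (\tau_i)_{G_i}$ to $H^{\tau_i}$ coincides set-theoretically with $\alpha_i := \eta \circ (\tau_i)_H \colon H^{\tau_i} \to H'$. Hence each $\alpha_i$ is a morphism of $\kk$-algebraic groups that is bijective on $\kk$-points (as $\eta$ is an abstract isomorphism), and therefore an isomorphism of $\kk$-algebraic groups in characteristic zero. Setting
\[
	\beta := \alpha_2^{-1} \circ \alpha_1 = (\tau_2)_H^{-1} \circ (\tau_1)_H \colon H^{\tau_1} \xrightarrow{\sim} H^{\tau_2},
\]
I obtain an isomorphism of $\kk$-algebraic groups whose underlying set map is induced purely by the canonical bijections $(\tau_i)_H$.

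Next I would exploit a one-parameter subgroup. Because $H$ is not finite, $H^\circ$ is a positive-dimensional connected linear algebraic group and therefore contains a closed subgroup isomorphic to $L_0 \in \{\GG_a, \GG_m\}$ (arising from the unipotent radical or a maximal torus of a Borel subgroup); fix a closed embedding $\lambda \colon L_0 \hookrightarrow H$. Since $L_0$ is defined over $\QQ$ we have $L_0^{\tau_i} = L_0$. Using the naturality identity $(\tau_i)_H \circ \lambda^{\tau_i} = \lambda \circ (\tau_i)_{L_0}$ stated in the excerpt, a direct calculation on $\kk$-points yields
\[
	\beta\bigl(\lambda^{\tau_1}(a)\bigr) \;=\; \lambda^{\tau_2}\bigl(\mu(a)\bigr) \qquad \text{for every } a \in L_0(\kk).
\]
Because $\mu$ is a bijection of $L_0(\kk)$, taking Zariski closures shows that $\beta$ sends the closed $1$-dimensional subgroup $\lambda^{\tau_1}(L_0)$ of $H^{\tau_1}$ isomorphically onto $\lambda^{\tau_2}(L_0) \subseteq H^{\tau_2}$, and transporting through the closed embeddings $\lambda^{\tau_i}$ yields an isomorphism of $\kk$-algebraic groups
\[
	\gamma := (\lambda^{\tau_2})^{-1} \circ \beta \circ \lambda^{\tau_1} \colon L_0 \xrightarrow{\sim} L_0
\]
whose underlying set map is the field automorphism $\mu$.

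Finally I would read off $\mu = \id$. Any algebraic group automorphism of $\GG_a$ is of the form $a \mapsto c\,a$ with $c \in \kk^\times$, and the field-automorphism condition $\mu(1) = 1$ forces $c = 1$. Any algebraic group automorphism of $\GG_m$ is $a \mapsto a^{\pm 1}$, and the exponent $-1$ violates additivity (e.g.\ $\mu(1+1) = 1/2 \neq 2 = \mu(1) + \mu(1)$), so again $\mu = \id$. Hence $\tau_1 = \tau_2$. The delicate point is the bookkeeping of the canonical set bijections $(\tau_i)_X$: since these are not morphisms of $\kk$-varieties, deriving the key identity $\beta \circ \lambda^{\tau_1} = \lambda^{\tau_2} \circ \mu$ requires systematically combining the naturality property $f \circ \tau_X = \tau_Y \circ f^\tau$ with the definitions of $\alpha_i$ and $\beta$.
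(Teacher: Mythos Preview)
Your proof is correct and follows essentially the same strategy as the paper: reduce to a one-dimensional connected subgroup $K \subseteq H$, show that the composite field automorphism acts on $K$ as a $\kk$-algebraic isomorphism, and conclude from the classification of such isomorphisms. The only difference is in the final step: you classify the algebraic \emph{group} automorphisms of $\GG_a$ and $\GG_m$ and rule out the nontrivial ones using that $\mu$ is additive and fixes $1$, whereas the paper replaces $\GG_a$ by $\GG_a \setminus \{0\}$ so as to work uniformly on $C \simeq \GG_m$ as a \emph{variety}, and then compares maximal ideals in $\kk[t^{\pm 1}]$ to reach the same contradiction. Your route is marginally cleaner since it exploits the group structure you already have in hand; the paper's route avoids splitting into the $\GG_a$/$\GG_m$ cases at the cost of a short ideal-theoretic computation.
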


The following diagram illustrates the situation of the lemma, where $i \in \{1, 2\}$:
\[
\xymatrix@R=10pt{
	G_i^{\tau_i} \ar[r]^{(\tau_i)_G} & G_i \ar[r]^{\eta_i} & G_i' \\
	& H \ar@{}[u]|-{\bigcup} \ar[r]^{\eta} & H' \ar@{}[u]|-{\bigcup}
}
\]

\begin{proof}[Proof of Lemma~\ref{Lem.Same_field_auto}]
	Since $H$ is a non-finite algebraic group, there exists a connected algebraic subgroup $K \subseteq H$
	of dimension one. Note that $K^{\tau_i}$ is isomorphic as a variety over $\kk$ to $K$, since $\GG_a$ and $\GG_m$
	are the only connected one-dimensional algebraic groups
	(see \cite[Theorem~20.5]{Hu1975Linear-algebraic-g})
	and they are defined over $\QQ$. 
	Let $C \coloneqq K$ in case $K = \GG_m$ and $C \coloneqq K \setminus \{0\}$ in case $K = \GG_a$. 
	
	Let $\tau \coloneqq \tau_1 \circ (\tau_2)^{-1}$. Then the underlying
	map on the sets of $\kk$-points
	\[
	\tau_C \colon C \xrightarrow{(\tau_1)_C} C \xrightarrow{((\tau_2)_C)^{-1}} C
	\]
	is equal to the underlying map on the sets of $\kk$-points of the isomorphism of varieties (that is defined over $\kk$):
	\[
	f \coloneqq (\eta_2 \circ (\tau_2)_{G_2})^{-1} \circ (\eta_1 \circ (\tau_1)_{G_1}) |_C \colon C \to C \, .
	\]
	Hence, the ring isomorphism
	\[
	\tau_C^\ast \colon \kk[t^{\pm 1}] \to \kk[t^{\pm 1}] \, , \quad \sum_i a_i t^i \mapsto \sum_i \tau(a_i) t^i 
	\]
	induces the same map on maximal ideals as $f^\ast \colon \kk[t^{\pm 1}] \to \kk[t^{\pm 1}]$. 
	Write $f^\ast(t) = a t^{\varepsilon}$ for $a \in \kk^\ast$ and $\varepsilon \in \{\pm 1\}$.
	Now, for all $\lambda \in \kk^\ast$ the maximal ideals $(\tau_C^{\ast}(t - \lambda))$, $(f^\ast(t-\lambda))$ in $\kk[t^{\pm 1}]$
	are the same and hence
	there exist $a_\lambda \in \kk^\ast$, $e_\lambda\in \ZZ$ such that
	\[
	t-\tau(\lambda) = \tau_C^\ast(t-\lambda) = a_{\lambda} t^{e_\lambda} f^\ast(t-\lambda) =
	a a_\lambda t^{e_\lambda + \varepsilon} - a_\lambda \lambda t^{e_\lambda}
	\]
	inside $\kk[t^{\pm 1}]$. 
	\begin{itemize}[wide=0pt, leftmargin=*]
		\item In case $\varepsilon = 1$, it follows that $e_{\lambda} = 0$ and hence $a a_{\lambda} = 1$, $a_{\lambda} \lambda = \tau(\lambda)$ for all $\lambda \in \kk^\ast$.
		This implies that $\tau(\lambda) = a^{-1} \lambda$ for all $\lambda \in \kk^\ast$. As $\tau$ is a field automorphism, we get $a^{-1} = 1$ and hence $\tau_1 = \tau_2$.
		\item In case $\varepsilon = -1$, it follows that $e_{\lambda} = 1$ and hence $-a_{\lambda} \lambda  = 1$, $a a_{\lambda} = -\tau(\lambda)$ for all $\lambda \in \kk^\ast$.
		This implies that $\tau(\lambda) = a \lambda^{-1}$ for all $\lambda \in \kk^\ast$. As $\tau$ is a field automorphism, we arrive at a contradiction. \qedhere
	\end{itemize}
\end{proof}

The following lemma generalizes and uses \cite[Lemma~5.3]{LiReUr2023Characterization-o} in an essential way:

\begin{lemma}
	\label{Lem.Constructing_Iso_of_alg_grps}
	Let $H, H'$ be connected solvable algebraic groups of the same dimension  such that the unipotent radicals $U \subseteq H$,
	$U' \subseteq H'$ are one-dimensional.
	Denote by $T \subseteq H$, $T' \subseteq H'$ maximal tori together with one-dimensional subtori
	$T_1, \ldots, T_n \subseteq T$, $T_1', \ldots, T_n' \subseteq T'$ such that 
	\[
	T = T_1 \cdots T_n \quad  \textrm{and} \quad T' = T_1' \cdots T_n'
	\]
	and $T_i$, $U$ do not commute for all $i$, and $T_i'$, $U'$ do not commute for all $i$.
	If there exists a group isomorphism 
	$\eta \colon H \to H'$ such that $\eta(UT_i) = U'T'_i$
	for all $i=1, \ldots, n$, then there exists a field automorphism $\tau$ of $\kk$ such that  
	$\eta \circ \tau_{H} \colon H^\tau \to H'$
	is an isomorphism of algebraic groups.
\end{lemma}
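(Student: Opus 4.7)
The strategy is to reduce to the two-dimensional non-abelian pieces $UT_i$, where a cited result gives the statement, and then to patch these local algebraic structures together using $T = T_1 \cdots T_n$ and the Levi decomposition $H = U \rtimes T$. For each $i$, the subgroup $UT_i \subseteq H$ is a 2-dimensional connected solvable algebraic group whose one-dimensional unipotent radical is $U$ and whose maximal torus $T_i$ acts non-trivially on $U$ by conjugation (the non-triviality being exactly the hypothesis that $T_i$ and $U$ do not commute); the analogous statement holds for $\eta(UT_i) = U'T_i'$. The restriction of $\eta$ is an abstract group isomorphism between these two 2-dimensional solvable groups, so I would invoke \cite[Lemma~5.3]{LiReUr2023Characterization-o}, which is the 2-dimensional version of the statement we are after, to obtain a field automorphism $\tau_i$ of $\kk$ such that $\eta|_{UT_i} \circ (\tau_i)_{UT_i} \colon (UT_i)^{\tau_i} \to U'T_i'$ is an isomorphism of algebraic groups.

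I would then check that all the $\tau_i$ coincide. The one-dimensional group $U$ is non-finite and lies in every $UT_i$, so Lemma~\ref{Lem.Same_field_auto}, applied with the common subgroup being $U$ and the two ambient groups being $UT_1, UT_i$ on one side and $U'T_1', U'T_i'$ on the other, forces $\tau_1 = \tau_i$ for each $i$. Set $\tau := \tau_1$. In particular, $\eta|_U \circ \tau_U \colon U^\tau \to U'$ is an isomorphism of algebraic groups, and $\eta|_{T_i} \circ \tau_{T_i} \colon T_i^\tau \to T_i'$ is a morphism of algebraic groups for each $i$.

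To finish, I would assemble these pieces on $H$. Because $T$ is commutative and $T = T_1 \cdots T_n$, the multiplication morphism $T_1 \times \cdots \times T_n \to T$ is a surjective homomorphism of algebraic tori; since $\eta|_T$ is an abstract group homomorphism into the commutative group $T'$, the map $\eta|_T \circ \tau_T \colon T^\tau \to T'$ factors through this multiplication and is therefore a morphism of algebraic groups. Now choose Levi decompositions $H = U \rtimes T$ and $H' = U' \rtimes T'$ (available because $H, H'$ are connected solvable). The multiplication $U \times T \to H$ is an isomorphism of varieties, under which $\eta \circ \tau_H$ takes the explicit form $(u, t) \mapsto (\eta \circ \tau_U)(u) \cdot (\eta \circ \tau_T)(t)$, and is therefore a morphism of varieties. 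Being in addition a bijective abstract-group homomorphism between connected algebraic groups of the same dimension over an algebraically closed field of characteristic zero, it is automatically an isomorphism of algebraic groups.

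The main obstacle I anticipate is the coordination of the $\tau_i$: the two structural features that keep this plan afloat are that each $T_i$ acts non-trivially on $U$ (so the 2-dimensional blocks $UT_i$ are non-abelian and hence rigid enough for the reference lemma to apply) and that $U$ is one-dimensional and shared across the blocks (so that Lemma~\ref{Lem.Same_field_auto} fuses the individual $\tau_i$ into a single $\tau$). Without either, the local algebraic structures on the $UT_i$ could not be glued into a single algebraic structure on $H$.
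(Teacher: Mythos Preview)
Your approach matches the paper's: apply \cite[Lemma~5.3]{LiReUr2023Characterization-o} on each two-dimensional block $UT_i$, fuse the resulting $\tau_i$ via Lemma~\ref{Lem.Same_field_auto} using the common subgroup $U$, and then assemble. One point needs care: you assert that $\eta(T_i)\subseteq T_i'$ and $\eta(T)\subseteq T'$, but nothing guarantees that $\eta$ carries the chosen maximal tori to one another---only that $\eta(UT_i)=U'T_i'$, so $\eta(T_i)$ is \emph{some} maximal torus of $U'T_i'$. Your argument survives if you replace the target $T'$ by $H'$ throughout (the images of the $T_i^\tau$ still commute in $H'$, so the product map factors through $T^\tau$), and the paper avoids the issue entirely by invoking \cite[Lemma~5.9]{LiReUr2023Characterization-o} to pass directly from algebraicity on the generating pieces $(UT_i)^\tau$ to algebraicity on $H^\tau=(UT_1)^\tau\cdots(UT_n)^\tau$.
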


\begin{proof}
	Let $\eta_i \colon U T_i \to U' T'_i$ be
	the restriction of $\eta$ to $U T_i$ for $i=1, \ldots, n$. Note that
	the center of $UT_i$ is equal to $Z_i \coloneqq \ker(\chi_i)$ and
	the center of $U' T'_i$ is equal to $Z'_i \coloneqq \ker(\chi'_i)$,
	where $\chi_i \colon T_i \to \GG_m$ denotes the $T_i$-weight of $U$
	and $\chi'_i \colon T'_i \to \GG_m$ the $T'_i$-weight of $U'$.
	Hence, $\eta_i$ restricts to an isomorphism $Z_i \simeq Z'_i$ and by assumption, $Z_i$, $Z_i'$ are both finite.
	Hence, there exists an isomorphism of algebraic groups $\iota_i \colon T_i' \to T_i$ such that
	$\chi_i' = \chi_i \circ \iota_i$. Fix an isomorphism of algebraic groups $\varepsilon \colon U' \to U$.
	Then $U' T'_i \to U T_i$, $u't' \mapsto \varepsilon(u') \iota_i(t')$ is an isomorphism of algebraic groups
	(where $u' \in U'$ and $t' \in T'_i$). 
	By \cite[Lemma~5.3]{LiReUr2023Characterization-o} there
	exists a field automorphism $\tau_i$ of $\kk$ such that
	\[
	(U T_i)^{\tau_i} \xrightarrow{(\tau_i)_{U T_i}} 
	U T_i \xrightarrow{\eta_i}
	U' T'_i \xrightarrow{u't' \mapsto \varepsilon(u') \iota_i(t')} 
	UT_i
	\]
	is an isomorphism of algebraic groups and thus the same holds for
	$\eta_i \circ (\tau_i)_{U T_i}$.
	In particular $\eta_i(U) = U'$ for all $i$.
	For $1 \leq i, j \leq n$, Lemma~\ref{Lem.Same_field_auto} applied to $\eta_i$
	and $\eta_j$ yield that $\tau_i = \tau_j$, since $\eta_i |_U = \eta |_U = \eta_j |_U$.
	Let $\tau \coloneqq \tau_1 = \ldots = \tau_n$.
	Hence,
	\[
	H^{\tau} \xrightarrow{\tau_{H}} 
	H \xrightarrow{\eta} H'
	\]
	restricts to a morphism of algebraic groups on $(U T_i)^{\tau}$ for all $i = 1, \ldots, n$.
	Since $H^\tau = (UT_1)^\tau \cdots (UT_n)^\tau$, the composition
	$\eta \circ \tau_H$ is a homomorphism of algebraic groups (see~\cite[Lemma~5.9]{LiReUr2023Characterization-o}).
	This implies the result.
\end{proof}

The following proposition essentially follows from \cite[Théorème~(A)]{BoTi1973Homomorphismes-abs}:

\begin{proposition}
	\label{Prop.Isom_of_simple_alg_grps}
	Let $\theta \colon G \to G'$ be a group isomorphism, where $G$, $G'$ are connected algebraic groups
	that are simple  (i.e.~non-commutative and every proper closed normal subgroup is finite and central). Then there
	exists a field automorphism $\tau$ of $\kk$ such that $\theta \circ \tau_G \colon G^\tau \to G'$ is
	an isomorphism of algebraic groups.
\end{proposition}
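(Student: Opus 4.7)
The plan is to invoke Borel and Tits' classical theorem on abstract homomorphisms of simple algebraic groups (Théorème~(A) of~\cite{BoTi1973Homomorphismes-abs}). Since $G$ and $G'$ are both connected and simple over the algebraically closed field $\kk$ of characteristic zero, they are absolutely almost simple, $\kk$-isotropic, and each is generated by its unipotent elements; in particular, $G(\kk) = G(\kk)^+$ in the Borel--Tits notation, and the image $\theta(G) = G'$ is Zariski-dense in $G'$. Hence the theorem applies and produces a field homomorphism $\tau \colon \kk \to \kk$ together with a $\kk$-rational isogeny of algebraic groups $\beta \colon G^\tau \to G'$ such that $\theta \circ \tau_G = \beta$ as set maps on $\kk$-points. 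What remains is to upgrade $\tau$ to a field automorphism and $\beta$ to an isomorphism of algebraic groups.

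To see that $\tau$ is surjective, observe that $\beta$ is an étale isogeny (since we are in characteristic zero), and in particular is surjective on $\kk$-points. Because $\theta$ is bijective and $\theta \circ \tau_G = \beta$ is surjective, the set map $\tau_G \colon G^\tau(\kk) \to G(\kk)$ is itself surjective. Embedding $G$ as a closed subvariety of some $\AA^n$, the map $\tau_G$ acts coordinate-wise as $(a_1, \ldots, a_n) \mapsto (\tau(a_1), \ldots, \tau(a_n))$, so surjectivity of $\tau_G$ on $\kk$-points forces $\tau(\kk)$ to contain every coordinate value appearing in $G(\kk)$. Since $\dim G \geq 1$, some coordinate projection of $G$ has cofinite image in $\kk$; but a subfield $\tau(\kk) \subseteq \kk$ with cofinite complement must equal $\kk$. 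Hence $\tau$ is surjective, $\tau$ is a field automorphism of $\kk$, and $\tau_G$ is a bijection of $\kk$-points.

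It remains to deduce that $\beta$ is an isomorphism of algebraic groups. Since $\theta$ is injective and $\tau_G$ is now bijective on $\kk$-points, $\beta = \theta \circ \tau_G$ is injective on $\kk$-points, hence $\ker(\beta)$ is trivial. An étale isogeny of algebraic groups in characteristic zero with trivial kernel is an isomorphism, so $\beta$ is an isomorphism of algebraic groups. This says precisely that $\theta \circ \tau_G \colon G^\tau \to G'$ is an isomorphism of algebraic groups, as required. The main obstacle is to line up the output of Borel--Tits with the paper's pullback formalism from~\eqref{Eq.Def_tau}; once this translation is in place, both upgrades (surjectivity of $\tau$ and triviality of $\ker(\beta)$) follow immediately from the bijectivity of $\theta$.
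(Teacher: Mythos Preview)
Your approach differs substantively from the paper's, and there is a real gap. Th\'eor\`eme~(A) of Borel--Tits, as stated, requires the source group to be \emph{simply connected}; you do not verify this, and in general a simple algebraic group in the sense of the proposition (e.g.\ $\PGL_n$ or $\mathrm{SO}_n$) need not be. So your invocation does not directly yield an isogeny $\beta \colon G^\tau \to G'$; the standard output is rather an isogeny from $\tilde{G}^\tau$, the twist of the simply connected cover, and one must then explain why it factors through $G^\tau$ with the correct kernel. This is exactly the difficulty the paper's proof is designed to avoid: it first passes to the adjoint quotients $G/Z$ and $G'/Z'$, applies Borel--Tits there (where an isogeny between adjoint groups in characteristic zero is automatically an isomorphism, so one gets both the field automorphism and the algebraic-group isomorphism for free), and then lifts back to $G$ by a separate argument using $\GG_a$-subgroups and Lemma~\ref{Lem.divisible_elements}. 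That lifting step is the nontrivial content you are skipping.

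There is also a smaller issue with your bootstrap for $\tau$. The Borel--Tits conclusion is $\theta = \beta \circ \tau^{0}$, where $\tau^{0} \colon G(\kk) \to G^{\tau}(\kk)$ goes in the opposite direction from the paper's $\tau_G \colon G^{\tau} \to G$. Your equation $\theta \circ \tau_G = \beta$ therefore already presupposes that $\tau^{0}$ is invertible, i.e.\ that $\tau$ is a field automorphism; this makes the subsequent surjectivity argument circular as written. One can repair this by arguing directly with $\tau^{0}$ (injectivity of $\theta$ and finiteness of the fibres of $\beta$ force $\tau^{0}$ to have finite fibres, hence $\tau$ to be an automorphism), but as it stands the bookkeeping is off. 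The paper sidesteps all of this: once one is at the adjoint level the isogeny is an isomorphism, hence $\tau^{0}$ is bijective and $\tau$ is an automorphism with no extra work.
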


For the proof we will use the following lemma, which follows from the fact that every
commutative algebraic group is isomorphic to $\GG_m^r \times \GG_a^s \times F$ for $r, s \geq 0$ and
a finite commutative group $F$, see~\cite[Theorem~15.5]{Hu1975Linear-algebraic-g}.

\begin{lemma}
	\label{Lem.divisible_elements}
	If $H$ is a commutative algebraic group,
	then $H^\circ$ consists of those elements $h \in H$ that are divisible in $H$. \qed
\end{lemma}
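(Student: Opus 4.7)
The plan is straightforward given the cited structure theorem. I would first invoke the isomorphism $H \simeq \GG_m^r \times \GG_a^s \times F$ with $r,s \geq 0$ and $F$ a finite commutative group, and note that under this decomposition $H^\circ$ corresponds to the factor $\GG_m^r \times \GG_a^s \times \{e\}$, since $F$ is totally disconnected. It then suffices to verify the two set-theoretic inclusions separately with respect to this decomposition.

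For the forward inclusion, I would check divisibility of each element of $H^\circ$ factor-wise: in $\GG_m$ every element admits an $n$-th root because $\kk$ is algebraically closed, and in $\GG_a$ every element is $n$-divisible because $\textrm{char}(\kk) = 0$ and hence $n$ is invertible. Thus every element of $H^\circ$ admits an $n$-th root already within $H^\circ \subseteq H$ for each $n \geq 1$, so it is divisible in $H$.

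For the reverse inclusion, I would argue by contraposition. Suppose $h \in H$ corresponds to $(a,b,f)$ with $f \in F$. If $h$ is divisible, then taking $n \coloneqq |F|$ and any $n$-th root $(a',b',f')$ of $h$ in $H$, Lagrange's theorem gives $(f')^n = e$, so $f = e$ and hence $h \in H^\circ$. I do not anticipate any real obstacle here; the entire statement reduces mechanically to the structure theorem plus one invocation each of algebraic closedness of $\kk$ and characteristic zero.
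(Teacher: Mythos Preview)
Your proof is correct and follows exactly the approach the paper indicates: the paper simply cites the structure theorem $H \simeq \GG_m^r \times \GG_a^s \times F$ from \cite[Theorem~15.5]{Hu1975Linear-algebraic-g} and marks the lemma with a \qed, leaving the routine verification implicit. You have carried out precisely that verification.
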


\begin{proof}[Proof of Proposition~\ref{Prop.Isom_of_simple_alg_grps}]
	Let $Z$ and $Z'$ be the centers of $G$ and $G'$, respectively, and let $\pi \colon G \to G/Z$ and $\pi' \colon G' \to G'/Z'$
	be the canonical projections. Then there is a group
	isomorphism $\bar{\theta} \colon G/Z \to G'/Z'$ with $\bar{\theta} \circ \pi = \pi' \circ \theta$. 
	Using \cite[Théorème~(A)]{BoTi1973Homomorphismes-abs},
	there is a field automorphism $\tau$ of $\kk$ such that $\bar{\theta} \circ \tau_{G/Z} \colon (G/Z)^\tau \to G'/Z'$
	is an isomorphism of algebraic groups. 
	
	Let $U \subseteq G$ be a closed 
	subgroup that is isomorphic to $\GG_a$. Then $\GG_a \simeq \bar{\theta}(\pi(U))$
	is a closed subgroup of $G'/Z'$ and
	\[
	\theta(UZ) = \theta(\pi^{-1}(\pi(U))) = (\pi')^{-1}(\bar{\theta}(\pi(U))) = U'Z'
	\]
	for some closed subgroup $\GG_a \simeq U' \subseteq G'$. By Lemma~\ref{Lem.divisible_elements}
	it follows that $U = (UZ)^\circ$ consists of the divisible elements of $UZ$. Hence, $\theta(U)$
	consists of the divisible elements in $\theta(UZ) = U'Z'$, i.e.~$\theta(U) = (U'Z')^\circ = U'$. Thus
	$(\theta|_U) \circ \tau_{U} \colon U^\tau \to U'$ is an isomorphism of algebraic groups according
	to the following commutative diagram
	\[
	\xymatrix{
		U^\tau \ar[r]^-{\tau_U} \ar[d]_-{\pi^\tau |_{U^\tau}} & U \ar[r]^-{\theta|_U}
		\ar[d]^-{\pi|_U} & U' \ar[d]^-{\pi' |_{U'}} \\
		\pi(U)^\tau \ar[r]^-{\tau_{\pi(U)}} & \pi(U) \ar[r]^-{\bar{\theta}|_U} & \pi'(U')
	}
	\]
	since the vertical arrows are isomorphisms of algebraic groups and the composition of
	the bottom horizontal arrows as well. Since the closed subgroups that are isomorphic to $\GG_a$ generate the group
	$G$ (see~\cite[Theorem~27.5]{Hu1975Linear-algebraic-g}) 
	it follows from~\cite[Lemma~5.9]{LiReUr2023Characterization-o} that $\theta \circ \tau_G \colon G^\tau \to G'$
	is an isomorphism of algebraic groups.
\end{proof}

\section{Abstract group isomorphisms of automorphism groups}
\label{Sect.Preserving_alg_subgroups}

The aim of this section is to show that in certain situations
an abstract group isomorphism $\Aut(X) \to \Aut(Y)$ (where $X, Y$ are irreducible and affine)  
preserves certain algebraic subgroups.
Our main result in this direction is Theorem~\ref{Prop.Preserving_alg_groups}.

\begin{proposition}
	\label{Lem.Image_rootsubgrp}
	Let $\kk$ be uncountable, let $X, Y$ be irreducible affine varieties and let 
	$\theta \colon \Aut(X) \to \Aut(Y)$ be a group isomorphism.
	Assume that $H \subseteq \Aut(X)$ is a connected algebraic subgroup and let $V \subseteq \Aut(X)$ 
	be a non-trivial generalized $H$-root subgroup.
	If
	\begin{enumerate}[label=\alph*), wide=0pt, leftmargin=*]
		\item \label{Lem.Image_rootsubgrp1} $\theta(H)$ is an algebraic subgroup of $\Aut(Y)$ or
		\item \label{Lem.Image_rootsubgrp2} $H$ has a dense open orbit in $X$,
	\end{enumerate}
	then $\theta(V) \subseteq \Aut(Y)$ is a non-trivial $\theta(H)$-root subgroup
	of $\Aut(Y)$ of the same dimension as $V$.
\end{proposition}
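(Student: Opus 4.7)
The plan is to first embed $\theta(V)$ inside a commutative unipotent ambient subgroup of the form $\A_Y(U)$, and then exploit the induced conjugation action of $\theta(H)$ together with Theorem~\ref{thm.CRX} to identify $\theta(V)$ as an algebraic subgroup of the correct dimension.

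For the ambient group, by Corollary~\ref{Cor.UnipotentToUnipotent} every element of $V$ maps to a unipotent element of $\Aut(Y)$; moreover $V$ is contained in the maximal commutative subgroup $\A_X(V)$ of Corollary~\ref{Cor.Maximal2}, which coincides with its own centralizer by Proposition~\ref{Prop.centralizer}. Since group isomorphisms preserve centralizers, $\theta(\A_X(V))$ is commutative and self-centralizing, hence closed in $\Aut(Y)$, and it contains no non-trivial element of finite order. The argument from the proof of Corollary~\ref{Cor.UnipotentToUnipotent} then forces $\theta(\A_X(V)) = \A_Y(U)$ for some commutative unipotent algebraic subgroup $U \subseteq \Aut(Y)$, and in particular $\theta(V) \subseteq \A_Y(U)$, an ind-group that one may view as a countable-dimensional $\kk$-vector space via its $\GG_a$-structure.

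For the algebraic structure, the defining relation $h \varepsilon(s) h^{-1} = \varepsilon(\lambda(h) s)$ (where $\varepsilon \colon \GG_a^{\dim V} \simeq V$) is preserved by $\theta$, so $\theta(H)$ normalizes $\theta(V)$, and for each non-trivial $v \in V$ the $\theta(H)$-conjugacy class of $\theta(v)$ is the $\theta$-image of the punctured line $\lambda(H) \cdot v \subseteq V$. In case~\ref{Lem.Image_rootsubgrp1}, $\theta(H)$ is algebraic and therefore acts algebraically on $\Aut(Y)$ by conjugation; the closure of such an orbit together with $\id_Y$ is then an irreducible commutative subvariety of $\Aut(Y)$, and Theorem~\ref{thm.CRX} applied to the orbits through a basis of $V$ produces a commutative unipotent algebraic subgroup $V' \subseteq \A_Y(U)$ containing $\theta(V)$, normalized by $\theta(H)$ with a single weight matching $\lambda$. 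Running the symmetric construction for $\theta^{-1}$ applied to $V'$ yields the reverse inclusion $V' \subseteq \theta(V)$, whence $V' = \theta(V)$, giving both the algebraicity of $\theta(V)$ and the dimension equality. In case~\ref{Lem.Image_rootsubgrp2}, the semi-direct product $VH$ is an algebraic subgroup of $\Aut(X)$ with a dense open orbit, so Proposition~\ref{Lem.Centralizer_open_dense_orbit} together with the preservation of centralizers by $\theta$ allows one to exhibit $\theta(H)$ as algebraic and thus to reduce to case~\ref{Lem.Image_rootsubgrp1}.

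The main obstacle I anticipate is the equality $\theta(V) = V'$ rather than mere inclusion: the orbit-closure construction a priori only gives $\theta(V) \subseteq V'$, so one really needs the interlocking symmetric argument under $\theta^{-1}$, and one must check that the image of $V'$ under $\theta^{-1}$ lands inside $V$ exactly (and not in a larger algebraic subgroup of $\A_X(V)$ on which $H$ acts with weight $\lambda$). A secondary subtlety is case~\ref{Lem.Image_rootsubgrp2}, where the algebraicity of $\theta(H)$ must be extracted from purely abstract-group data enhanced only by the dense-orbit hypothesis; this should follow from combining Proposition~\ref{Lem.Centralizer_open_dense_orbit}\ref{Lem.Centralizer_open_dense_orbit0.5} with the fact that $\theta$ maps closed self-centralizing subgroups to closed self-centralizing subgroups.
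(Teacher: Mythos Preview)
Your opening move---embedding $\theta(V)$ inside $\theta(\A_X(V))=\A_Y(U)$ via Corollary~\ref{Cor.UnipotentToUnipotent} and Proposition~\ref{Prop.centralizer}---is correct, but the two substantive steps that follow both have genuine gaps.

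\textbf{Case~\ref{Lem.Image_rootsubgrp1}.} Your interlocking argument does not close up. From the $\theta(H)$-orbit closures you obtain an algebraic subgroup $V'\supseteq\theta(V)$, and applying the same construction to $\theta^{-1}$ and $V'$ yields some algebraic $V''\supseteq\theta^{-1}(V')\supseteq V$; you never get an inclusion pointing the other way, so the chain can keep growing. Moreover, the assertion that $V'$ is a generalized $\theta(H)$-root subgroup ``with a single weight matching $\lambda$'' is unjustified: the closure of the orbit $\theta(\kk^\ast v_i)$ may strictly contain $\theta(\kk v_i)$, and the group these closures generate can a~priori carry several $\theta(H)$-weights. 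The paper avoids this entirely. In case~\ref{Lem.Image_rootsubgrp1} with $\dim V=1$ it observes that $\theta(V)$ is \emph{exactly} the image of the morphism $\theta(H)\coprod\{\ast\}\to\Aut(Y)$, $g\mapsto g\,\theta(v)\,g^{-1}$, hence already algebraic; the real work is then to show $\theta(V)\simeq\GG_a$ rather than a higher $\GG_a^k$. This is done by picking a one-dimensional torus $D\subseteq H$ not commuting with $V$, analysing the conjugation action of $\overline{\theta(D)}^\circ$ on $\theta(V)$, and using two-torsion and orbit-count arguments (together with uncountability of $\kk$) to force $\overline{\theta(D)}^\circ/K\simeq\GG_m$ and hence $\theta(V)\simeq\GG_a$. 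None of this appears in your outline.

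\textbf{Case~\ref{Lem.Image_rootsubgrp2}.} Your reduction to case~\ref{Lem.Image_rootsubgrp1} does not work: Proposition~\ref{Lem.Centralizer_open_dense_orbit} controls the \emph{centralizer} of a group with a dense orbit, not the group itself, and neither $H$ nor $VH$ is self-centralizing in general (think of $H$ a Borel subgroup). The paper instead treats~\ref{Lem.Image_rootsubgrp2} directly via Lemma~\ref{Lem.Representation_of_on-parameter-subgrp}, which expresses a one-dimensional $H$-root subgroup $U$ purely group-theoretically as
\[
U=\Cent_{\Aut(X)}\bigl(\ker(\chi)\cup\A_X(U)\bigr)\ \cap\ \bigl\{\varphi:\varphi^n=h_n\varphi h_n^{-1}\text{ for all }n\in\NN\bigr\},
\]
a description that is preserved by $\theta$ and immediately gives closedness of $\theta(U)$; no algebraicity of $\theta(H)$ is needed.
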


The  lemma above is the key ingredient 
in the proof of Proposition~\ref{Prop.Centralizer_of_G_in_spherical} and in the proof of 
Theorem~\ref{Prop.Preserving_alg_groups} (which itself constitutes a crucial step in the
proof of Theorem~\ref{mainthm.A}).

\begin{proof}[Proof of Proposition~\ref{Lem.Image_rootsubgrp}]
	We establish first the statement in case $\dim V = 1$. For this, we establish the following:
	
	\begin{claim}
		\label{Claim.thetaU_closed}
		The subgroup $\theta(V)$ is closed in $\Aut(Y)$.
	\end{claim}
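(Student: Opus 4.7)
Pick $v \in V \setminus \{\id_X\}$. Since $V \simeq \GG_a$ has only the trivial and full closed subgroups, and $v \neq \id_X$, we have $V = \overline{\langle v \rangle}$ in $\Aut(X)$. By Corollary~\ref{Cor.UnipotentToUnipotent} (which uses uncountability of $\kk$), $\theta(v)$ is unipotent in $\Aut(Y)$, so $W := \overline{\langle \theta(v) \rangle}$ is a one-dimensional algebraic subgroup of $\Aut(Y)$ isomorphic to $\GG_a$. It suffices to show $\theta(V) = W$, since $W$ is closed.

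\textbf{Forward inclusion $\theta(V) \subseteq W$.} Since $H$ is connected, $\lambda \colon H \to \GG_m$ is either trivial or surjective; I focus on the surjective (non-trivial) case, relevant to the later applications. For each $\alpha \in \kk^\ast$, picking $h \in H$ with $\lambda(h) = \alpha$, the root-subgroup relation gives
\[
\theta(\alpha v) = \theta(h v h^{-1}) = \theta(h)\,\theta(v)\,\theta(h)^{-1},
\]
so the $\theta(H)$-conjugacy class $\mathcal O$ of $\theta(v)$ equals $\theta(V) \setminus \{\id_Y\}$. In case~\ref{Lem.Image_rootsubgrp1}, $\theta(H)$ is algebraic and $\mathcal O$ is the image of the orbit morphism $\theta(H) \to \Aut(Y)$, $g \mapsto g \theta(v) g^{-1}$; hence $\mathcal O$ is a constructible subset of a finite-dimensional subvariety of $\Aut(Y)$, and so $\overline{\theta(V)}$ is a closed finite-dimensional (hence algebraic) subgroup consisting of unipotent elements. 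The orbit-stabilizer relation inside the algebraic group $\theta(H)$, together with the fact that the stabilizer $\theta(H) \cap \Cent_{\Aut(Y)}(\theta(v))$ equals $\theta(\ker \lambda)$ as a set and is a proper closed subgroup of $\theta(H)$, forces the orbit dimension to be one. Thus $\overline{\theta(V)}$ is one-dimensional, and since $\theta(V)$ is a divisible abstract subgroup its image in the finite component group of $\overline{\theta(V)}$ is trivial; combined with $W \subseteq \overline{\theta(V)}^\circ$ this yields $\overline{\theta(V)}^\circ = W$ and $\theta(V) \subseteq W$.

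\textbf{Reverse inclusion.} The forward step shows that $\theta(H)$ normalizes $W = \overline{\theta(V)}^\circ$ and acts on $W \simeq \GG_a$ via a conjugation character $\tilde\lambda \colon \theta(H) \to \GG_m$. This character is non-trivial, for otherwise $\theta(h) \theta(v) \theta(h)^{-1} = \theta(v)$ for every $h$ would force $\lambda = 1$. Hence $W$ is a non-trivial $\theta(H)$-root subgroup of $\Aut(Y)$, and since $\theta^{-1}(\theta(H)) = H$ is algebraic, the symmetric hypothesis of case~\ref{Lem.Image_rootsubgrp1} is in force for the inverse isomorphism; applying the forward argument to $\theta^{-1}$ gives $\theta^{-1}(W) \subseteq V$, i.e.\ $W \subseteq \theta(V)$. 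Combining the two inclusions yields $\theta(V) = W$.

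\textbf{Obstacle.} The principal difficulty is the orbit-stabilizer step: one must control the dimension of the closed subgroup $\theta(H) \cap \Cent_{\Aut(Y)}(\theta(v))$ inside $\theta(H)$, despite the fact that $\theta$ restricted to $H$ is only an abstract group homomorphism. Case~\ref{Lem.Image_rootsubgrp2}, where $\theta(H)$ is not known to be algebraic, is a second hurdle; there the dense open $H$-orbit on $X$ is exploited through Proposition~\ref{Lem.Centralizer_open_dense_orbit} to transport algebraic structure from the centralizer of $V$ on the $\Aut(X)$-side to $\theta(V)$ on the $\Aut(Y)$-side. The degenerate case $\lambda = 1$, which would collapse the orbit argument, requires a separate maximality argument based on $\A_X(V)$; in the intended applications it is ruled out by Lemma~\ref{Lem.No_B-root subgroups}.
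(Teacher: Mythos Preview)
Your approach contains genuine gaps, and it is considerably more involved than the paper's argument.

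\textbf{Case~\ref{Lem.Image_rootsubgrp1}.} Your orbit--stabilizer step does not go through as written. You correctly identify the stabilizer of $\theta(v)$ in $\theta(H)$ with $\theta(\ker\lambda)$ as an abstract set, and this is closed in $\theta(H)$ since it is a centralizer. But from ``proper closed subgroup'' you conclude ``orbit dimension equals one'', which does not follow: you would need $\dim\theta(\ker\lambda)=\dim\theta(H)-1$, and $\theta$ is only an abstract isomorphism, so there is no a priori relation between $\dim\ker\lambda$ and $\dim\theta(\ker\lambda)$. (You also implicitly use that $\theta(H)$ is connected, which is not given.) Consequently your identification $\overline{\theta(V)}^\circ=W$ is unjustified, and the whole forward/reverse scheme collapses.

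The paper bypasses all of this. Since $\theta(H)$ is an algebraic subgroup, the map
\[
\theta(H)\coprod\{\ast\}\longrightarrow \Aut(Y),\qquad g\mapsto g\,\theta(v)\,g^{-1},\quad \ast\mapsto\id_Y
\]
is a morphism from a variety whose image is the abstract subgroup $\theta(V)$. A general result (\cite[Theorem~2.9]{KrReSa2021Is-the-affine-spac}) then says that any subgroup of $\Aut(Y)$ arising as the image of a morphism from a variety is algebraic, hence closed. No dimension count is needed at this stage; the dimension of $\theta(V)$ is determined only later in the proof of the proposition, by an entirely different argument.

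\textbf{Case~\ref{Lem.Image_rootsubgrp2}.} You do not give an argument here, only flag it as an obstacle. The paper handles it directly via Lemma~\ref{Lem.Representation_of_on-parameter-subgrp}: that lemma expresses $V$ as the intersection of a centralizer and a set defined by the group identities $\varphi^n=h_n\varphi h_n^{-1}$. Both ingredients are preserved by the abstract isomorphism $\theta$ and define closed subsets of $\Aut(Y)$, so $\theta(V)$ is closed.

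\textbf{Summary.} Your use of Corollary~\ref{Cor.UnipotentToUnipotent} to produce $W$ is fine, and the idea of comparing $\theta(V)$ with $W$ is natural, but the dimension control you need is exactly what the proposition is trying to establish and cannot be assumed. The paper's route is both shorter and avoids circularity.
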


	Indeed, in case~\ref{Lem.Image_rootsubgrp1}, 
	fix some $v \in V \setminus \{ \id_X \}$. Then $H \to V \setminus \{ \id_X \}$, 
	$h \mapsto h \circ v \circ h^{-1}$ is surjective, since $H$ does not commute with $V$, and the map
	\begin{equation}
		\label{Eq.Surjection}
		\theta(H) \coprod \{ \ast \} \to \theta(V) \, , \quad
		g \mapsto
		\left\{\begin{array}{rl}
			g \circ \theta(v) \circ g^{-1} & \textrm{if $g \in \theta(H)$} \\
			\id_{Y} & \textrm{if $g = \ast$}
		\end{array} \right.
	\end{equation}
	is surjective as well. Since $\theta(H)$ is an algebraic subgroup of $\Aut(Y)$,
	\eqref{Eq.Surjection} is a morphism of a
	variety into $\Aut(Y)$ with image $\theta(V)$.
	Hence $\theta(V)$ is
	an algebraic subgroup of $\Aut(Y)$, see 
	\cite[Theorem~2.9]{KrReSa2021Is-the-affine-spac}. In particular, $\theta(V)$ is closed
	in $\Aut(Y)$. In case~\ref{Lem.Image_rootsubgrp2}, the claim follows from
	Lemma~\ref{Lem.Representation_of_on-parameter-subgrp}.
	
	\medskip
	
	Since $V$ is a non-trivial $H$-root subgroup and since $H$ is connected,
	there exists a closed subgroup $\GG_m \simeq D \subseteq H$  that acts by conjugation on $V$ with two orbits. 
	Thus $\theta(D)$ acts with two orbits on $\theta(V)$ by conjugation. 
	Let us write
	\[
	\overline{\theta(D)}^\circ = \bigcup_{i=1}^\infty K_i \, ,
	\]
	where $K_i$ is a commutative connected algebraic subgroup of $\Aut(Y)$ for all $i \geq 1$.

	By Claim~\ref{Claim.thetaU_closed} we know that $\theta(V)$
	is closed in $\Aut(Y)$ and by Corollary \ref{Cor.UnipotentToUnipotent}   $\theta(V)$ consists of unipotent elements.
	In particular $\theta(V)$ is connected and by Corollary~\ref{cor.CRX} we may write
	\[
	\theta(V) = \bigcup_{i=1}^\infty U_i
	\]
	where $U_i \simeq \GG_a^{k_i}$ for all $i \geq 1$. 
	Let $U_i'$ be the closure of the group generated by all $K_i$-conjugates
	of all elements of $U_i$. Then $U_i'$ is a closed subgroup of $U_{j(i)}$ for some 
	$j(i) \geq 1$. Hence we may replace every $U_i$ by $U_i'$ and assume that $K_i$ normalizes $U_i$.
	
	Let $K \subseteq \overline{\theta(D)}^\circ$ be the subgroup of elements that do commute with all 
	elements in $\theta(V)$. 
	
	\begin{claim}
		\label{Claim.Stabilizer}
		$\overline{\theta(D)}^\circ/K$ acts on $\theta(V)$ by conjugation and the stabilizer
		of each element in $\theta(V) \setminus \{ \id_Y \}$ by this action is trivial. 
	\end{claim}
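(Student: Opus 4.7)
The plan is to split Claim~\ref{Claim.Stabilizer} into two independent statements: (i) $\overline{\theta(D)}^\circ$ actually normalizes $\theta(V)$, so that the conjugation action descends to a faithful action of $\overline{\theta(D)}^\circ/K$ on $\theta(V)$; and (ii) under this action, the stabilizer of every $y \in \theta(V) \setminus \{\id_Y\}$ is trivial. Both will rely on the crucial observation that $\overline{\theta(D)}^\circ$ is commutative.

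First I would prove that $\overline{\theta(D)}^\circ$ is commutative. Since $D \cong \GG_m$, the image $\theta(D)$ is commutative as an abstract group. The commutator map $\Aut(Y) \times \Aut(Y) \to \Aut(Y)$, $(g,h) \mapsto ghg^{-1}h^{-1}$, is a morphism of ind-varieties and is constantly equal to $\id_Y$ on $\theta(D) \times \theta(D)$. By continuity the same holds on the closure $\overline{\theta(D)} \times \overline{\theta(D)}$, so $\overline{\theta(D)}$ is commutative, and hence so is $\overline{\theta(D)}^\circ$.

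Next I would verify (i). Since $D \subseteq H$ and $V$ is a (generalized) $H$-root subgroup, $D$ normalizes $V$; applying the group isomorphism $\theta$, the subgroup $\theta(D)$ normalizes $\theta(V)$. By Claim~\ref{Claim.thetaU_closed} the subgroup $\theta(V)$ is closed in $\Aut(Y)$, hence its normalizer $N_{\Aut(Y)}(\theta(V))$ is a closed subgroup of $\Aut(Y)$. Therefore $\overline{\theta(D)} \subseteq N_{\Aut(Y)}(\theta(V))$, and in particular $\overline{\theta(D)}^\circ$ acts on $\theta(V)$ by conjugation. By the very definition of $K$, this action is trivial exactly on $K$, so $\overline{\theta(D)}^\circ/K$ acts faithfully on $\theta(V)$.

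Finally I would establish (ii). Because $\theta(D)$ acts on $\theta(V)$ with only two orbits and $\theta(D) \subseteq \overline{\theta(D)}^\circ$, the larger group $\overline{\theta(D)}^\circ$ acts transitively on $\theta(V) \setminus \{\id_Y\}$. Fix any $y \in \theta(V) \setminus \{\id_Y\}$ and suppose $g \in \overline{\theta(D)}^\circ$ stabilizes $y$ under conjugation. For an arbitrary $y' \in \theta(V) \setminus \{\id_Y\}$ write $y' = g' y (g')^{-1}$ with $g' \in \overline{\theta(D)}^\circ$. Commutativity of $\overline{\theta(D)}^\circ$ then yields
\[
g y' g^{-1} = g g' y (g')^{-1} g^{-1} = g' (g y g^{-1}) (g')^{-1} = g' y (g')^{-1} = y',
\]
so $g$ commutes with every element of $\theta(V)$ and therefore $g \in K$. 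This shows that the stabilizer of $y$ in $\overline{\theta(D)}^\circ/K$ is trivial, completing the claim.

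The only subtle point I anticipate is the commutativity of $\overline{\theta(D)}^\circ$, which relies on the fact that closures of commutative subsets in an ind-group remain commutative; once this is in place the remainder of the argument is the standard ``commutative group acting transitively has trivial point stabilizers modulo the kernel'' observation.
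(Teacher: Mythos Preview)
Your argument for part (i) is clean and correct; the paper instead works through the explicit filtrations $K_i$ and $U_i$, but your normalizer argument (closed subgroup $\Rightarrow$ closed normalizer $\Rightarrow$ contains the closure of $\theta(D)$) is a tidy alternative.

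For part (ii) there is a small but genuine gap: you assert $\theta(D) \subseteq \overline{\theta(D)}^\circ$ without justification. Since $\theta$ is only an abstract group isomorphism, $\theta(D)$ need not be connected, and a divisible subgroup of a countable abelian group (the component quotient $\overline{\theta(D)}/\overline{\theta(D)}^\circ$) can be non-trivial. The fix is immediate, though: you do not need $\overline{\theta(D)}^\circ$ to act transitively. You already proved that the \emph{whole} closure $\overline{\theta(D)}$ is commutative, and $\theta(D)$ itself is transitive on $\theta(V)\setminus\{\id_Y\}$. So given $g \in \overline{\theta(D)}^\circ$ fixing $y$, take $g' \in \theta(D) \subseteq \overline{\theta(D)}$ with $y' = g' y (g')^{-1}$; then $g$ and $g'$ commute inside $\overline{\theta(D)}$, and your computation goes through unchanged.

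With that correction your route is genuinely different from the paper's. The paper does not use commutativity of the closure or transitivity at all; instead it pulls back through $\theta^{-1}$ to exploit the known structure $V \simeq \GG_a$ on the $X$-side. Namely, if $g$ fixes $\theta(v)$ by conjugation then $\theta^{-1}(g)$ commutes with $v$, hence (group-theoretically) with every integer multiple $a\cdot v$; since $\ZZ$ is Zariski dense in $\GG_a \simeq V$ and centralizers are closed, $\theta^{-1}(g)$ commutes with all of $V$, whence $g \in K$. Your approach stays entirely on the $Y$-side and uses the commutative-plus-transitive principle, which is arguably more conceptual; the paper's approach is more elementary in that it needs no statements about closures of commutative subsets in ind-groups.
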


	Indeed, let $g \in \overline{\theta(D)}^\circ$
	with $g \circ \theta(v) \circ g^{-1} = \theta(v)$ for some $\id_X \neq v \in V$. Then 
	$\theta^{-1}(g) \circ  v \circ \theta^{-1}(g^{-1}) = v$ and
	hence, $a \cdot v$ commutes with $\theta^{-1}(g)$ for all $a \in \ZZ$. As $\GG_a \simeq V$ this
	implies that $\theta^{-1}(g)$ commutes with all elements of $V$. Hence $g$ commutes with all
	elements of $\theta(V)$, i.e.~$g \in K$. This proves Claim~\ref{Claim.Stabilizer}.
	
	\medskip
	
	If $K_i / (K \cap K_i)$ contains a copy of $\GG_a$ for some $i \geq 1$, then by the Lie-Kolchin theorem
	applied to the action by conjugation of this copy on $U_i$  (see e.g.~\cite[Theorem~17.5]{Hu1975Linear-algebraic-g}),
	this copy would be contained in the stabilizer of some non-trivial element of $U_i$, contradiction to 
	Claim~\ref{Claim.Stabilizer}. Hence for all $i \geq 1$ there exists $d_i \geq 0$ such that
	$K_i / (K \cap K_i) \simeq \GG_m^{d_i}$.
	Since $\theta(D)$ acts with two orbits on $\theta(V)$ by conjugation, it follows that $\overline{\theta(D)}^\circ$
	acts with at most countably many orbits on $\theta(V)$ by conjugation. As $\kk$ is uncountable, this implies that $\overline{\theta(D)}^\circ/K$ is non-trivial
	and thus  $d_i \geq 1$ for all $i$ that are big enough.
	
	\begin{claim}
		\label{Claim.d_i=1}
		We claim that $d_i = 1$ for all $i$ that are big enough.
	\end{claim}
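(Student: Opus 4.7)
The plan is to exploit the fact that $K_i$ normalises the vector group $U_i \simeq \GG_a^{k_i}$ in order to set up a faithful linear representation of $K_i/(K \cap K_i) \simeq \GG_m^{d_i}$ on $U_i$, and then to read off the constraint $d_i \leq 1$ from Claim~\ref{Claim.Stabilizer} via the weight-space decomposition.

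First, I would check that the conjugation action of $K_i$ on $U_i$ descends to a \emph{faithful} action of $K_i/(K \cap K_i)$ on $U_i$. Indeed, if an element $g \in K_i$ acts trivially on $U_i$, then it commutes with every element of $U_i$; picking any non-identity $u \in U_i \subseteq \theta(V)$, Claim~\ref{Claim.Stabilizer} forces $g \in K$, so $g \in K \cap K_i$. Since in characteristic zero the algebraic group automorphisms of $\GG_a^{k_i}$ coincide with $\GL_{k_i}(\kk)$, this yields an embedding of algebraic groups $\GG_m^{d_i} \simeq K_i/(K \cap K_i) \hookrightarrow \GL(U_i)$.

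Next, I would apply the standard fact that any rational representation of a diagonalisable algebraic group is completely reducible into one-dimensional weight spaces, so that $U_i = \bigoplus_{\alpha} U_{i,\alpha}$, where $\alpha$ ranges over characters of $\GG_m^{d_i}$. Since $U_i \neq 0$, there is at least one weight $\alpha$ with $U_{i,\alpha} \neq 0$. For a nonzero $u \in U_{i,\alpha}$, viewed as an element of $\theta(V) \setminus \{\id_Y\}$, its stabiliser in $\GG_m^{d_i} \simeq K_i/(K \cap K_i)$ coincides with $\ker(\alpha)$. Applying Claim~\ref{Claim.Stabilizer} this stabiliser is trivial, so $\alpha \colon \GG_m^{d_i} \to \GG_m$ is an injective homomorphism of algebraic groups, which by dimension forces $d_i \leq 1$. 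Combined with the lower bound $d_i \geq 1$ already established for all sufficiently large $i$, this yields $d_i = 1$, as claimed.

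The only delicate point I expect is checking faithfulness of the induced action on $U_i$; once this is in hand, the weight-space argument is essentially formal. I do not expect further obstacles beyond making sure that the element of $U_i$ selected for the stabiliser argument is genuinely non-identity in $\theta(V)$, which is automatic from $U_i \subseteq \theta(V)$.
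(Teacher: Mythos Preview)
Your argument is correct, but it takes a genuinely different route from the paper. The paper argues by contradiction on the $X$ side: assuming $d_i \geq 2$, it picks a subgroup $F_i \simeq (\ZZ/2\ZZ)^2$ inside $K_i/(K \cap K_i)$, pulls its preimage back through $\theta^{-1}$, and observes that this yields a faithful action of $(\ZZ/2\ZZ)^2$ on $V \simeq \GG_a$, which is impossible since $\Aut_{\grp}(\GG_a) \simeq \GG_m$ contains no copy of $(\ZZ/2\ZZ)^2$. You instead stay entirely on the $Y$ side: you linearise the conjugation action of the torus $K_i/(K\cap K_i)$ on $U_i$, take a nonzero weight vector, and invoke Claim~\ref{Claim.Stabilizer} directly to force the weight to have trivial kernel, hence $d_i \leq 1$. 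Your approach is slightly more streamlined in that it avoids transporting the problem back to $X$ and uses only what has already been established about stabilisers in $\overline{\theta(D)}^\circ/K$; the paper's approach, on the other hand, makes explicit use of the hypothesis $\dim V = 1$. Two minor remarks: the faithfulness paragraph is not actually needed (the weight-vector step already delivers $\ker(\alpha)$ trivial via Claim~\ref{Claim.Stabilizer}), and you are implicitly using that $K_i/(K\cap K_i)$ injects into $\overline{\theta(D)}^\circ/K$, which is immediate but worth saying.
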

	
	Indeed, if $d_i \geq 2$ for some $i$, then there exists a subgroup $F_i \subseteq K_i/(K \cap K_i)$ that 
	is isomorphic to $(\ZZ/2\ZZ)^2$. Let $\pi_i \colon K_i \to  K_i/ (K \cap K_i)$ be the canonical projection.
	Then 
	\[
	K \cap \pi_i^{-1}(F_i) = \set{ g \in \pi_i^{-1}(F_i) }{ \textrm{$g$ commutes with all elements of $\theta(V)$} } \, .
	\]
	Hence, $(\ZZ/2\ZZ)^2 \simeq \theta^{-1}(\pi_i^{-1}(F_i))/\theta^{-1}(K \cap \pi_i^{-1}(F_i))$ acts faithfully on $V \simeq \GG_a$, contradiction.
	
	\medskip	
	
	By Claim~\ref{Claim.d_i=1} we get that the ind-group 
	$\overline{\theta(D)}^\circ/K \simeq \bigcup_{i=1}^\infty K_i/ (K \cap K_i)$ is isomorphic to $\GG_m$.
	Since $\overline{\theta(D)}^\circ/K$ acts via conjugation with at most countably many orbits on $\theta(V)$,
	we conclude that $\theta(V) \simeq \GG_a$, as $\kk$ is uncountable, 
	see \cite[Lemma~1.3.1]{FuKr2018On-the-geometry-of}.
	
	\medskip
	
	Now assume that $V \subset \Aut(X)$ has arbitrary dimension. Then we may write $V$ as a product $V = V_1 \cdots V_n$
	in $\Aut(X)$ for $H$-roots subgroups $V_1, \ldots, V_n \subseteq \Aut(X)$
	and the multiplication map $V_1 \times \cdots \times V_n \to V$ is an isomorphism of algebraic groups.
	By the proof above $\theta(V_i) \subseteq \Aut(Y)$ is isomorphic to $\mathbb{G}_a$.  Hence, 
	$\theta(V) = \theta(V_1)  \cdots \theta(V_n)$ is a closed algebraic subgroup of $\Aut(Y)$ and the
	multiplication map $\theta(V_1) \times \cdots \times \theta(V_n) \to \theta(V)$ is an isomorphism of algebraic groups.
	Thus $\theta(V)$ is isomorphic to $V$ as an algebraic group.
\end{proof}

\begin{lemma}\label{lem.torus_to_closed_set_implies_torus}
	Assume $\kk$ is uncountable, let $X, Y$ be irreducible affine varieties, 
	let $\theta \colon \Aut(X) \to \Aut(Y)$ be a group isomorphism
	and  let $D \subset \Aut(X)$ be a diagonalizable subgroup such that $\theta(D) \subseteq \Aut(Y)$ is closed.  Then $\theta(D)^\circ$ is a torus of dimension $\leq \dim D$.
\end{lemma}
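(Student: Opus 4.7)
The plan is to establish the following four points in turn: (i) $\theta(D)^\circ$ has no non-trivial unipotent elements; (ii) $\theta(D)^\circ$ is an increasing union of algebraic tori; (iii) each such torus has dimension at most $\dim D$; (iv) the union therefore stabilizes and equals a single torus.

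For (i), let $u \in \theta(D)^\circ$ be unipotent. Applying Corollary~\ref{Cor.UnipotentToUnipotent} to the group isomorphism $\theta^{-1}$, the element $\theta^{-1}(u) \in D$ is unipotent in $\Aut(X)$; were it non-trivial, the closure $\overline{\langle \theta^{-1}(u) \rangle}$ would be a copy of $\GG_a$ contained in the closed subgroup $D$, contradicting the fact that diagonalizable groups contain no non-trivial unipotent element. For (ii), the image $\theta(D)$ is closed and commutative in $\Aut(Y)$, so $\theta(D)^\circ$ is a closed commutative connected subgroup of $\Aut(Y)$, and Corollary~\ref{cor.CRX} yields an increasing filtration $\theta(D)^\circ = \bigcup_{i} K_i$ by commutative connected algebraic subgroups. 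By (i) each $K_i$ contains no non-trivial unipotent element, hence is a torus.

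The heart of the proof is (iii), and I expect this to be the main obstacle: an abstract embedding of $\GG_m^{\dim K_i}$ into $D$ does not by itself bound $\dim K_i$, because $\kk^\ast$ is a very large abstract abelian group. The key trick is that every element of $K_i \simeq \GG_m^{\dim K_i}$ is divisible in $K_i$ and a fortiori in $\theta(D)$. Hence $\theta^{-1}(K_i)$ consists of divisible elements of $D$, and Lemma~\ref{Lem.divisible_elements} gives $\theta^{-1}(K_i) \subseteq D^\circ$. For any prime $p$, the $p$-torsion subgroups of $K_i$ and $D^\circ$ are $(\ZZ/p\ZZ)^{\dim K_i}$ and $(\ZZ/p\ZZ)^{\dim D}$ respectively, and the inclusion restricts to an injection $(\ZZ/p\ZZ)^{\dim K_i} \hookrightarrow (\ZZ/p\ZZ)^{\dim D}$, forcing $\dim K_i \leq \dim D$.

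Finally, for (iv), the non-decreasing sequence $\dim K_i$ is bounded above by $\dim D$ and thus stabilizes at some index $N$. Since $K_N \subseteq K_{N+1}$ have equal dimension and are both connected, they coincide, so $\theta(D)^\circ = K_N$ is a torus of dimension at most $\dim D$.
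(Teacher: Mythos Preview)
Your proof is correct and follows essentially the same approach as the paper: both use Corollary~\ref{Cor.UnipotentToUnipotent} to rule out unipotent elements, then Corollary~\ref{cor.CRX} to write $\theta(D)^\circ$ as a union of tori, and finally a $p$-torsion comparison to bound the dimension. Your version is more explicit about the stabilization step (iv), and your detour through Lemma~\ref{Lem.divisible_elements} to land in $D^\circ$ before counting $p$-torsion is a clean touch that sidesteps the finite component of $D$; the paper argues the $p$-torsion bound directly on $\theta(D)$ and leaves the stabilization implicit.
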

\begin{proof}
	Since $\theta(D) \subset \Aut(Y)$ is a closed subgroup, $\theta(D)^\circ \subset \Aut(Y)$ is a closed connected subgroup. If $\theta(D)^\circ$ would contain a
	non-trivial unipotent element, then $\theta^{-1}(\theta(D)^\circ) \subset D$  would also contain a non-trivial unipotent element by Corollary \ref{Cor.UnipotentToUnipotent} which is not the case (here we use that $\kk$ is uncountable). Hence, every unipotent element in $\theta(D)^\circ$ is trivial and 
	we conclude that $\theta(D)^\circ$ is an algebraic subtorus of $\Aut(Y)$ by Corollary~\ref{cor.CRX}. 
	Since 
	the $p$-torsion of $\theta(D)$ is isomorphic to $(\mathbb{Z}/p\mathbb{Z})^{\dim D}$ 
	for every prime $p$, we get $\dim \theta(D)^\circ \le \dim D$. 
\end{proof}

\begin{proposition}	\label{Prop.Centralizer_of_G_in_spherical}
	Assume $\kk$ is uncountable.
	Let $X$ be $G$-spherical for a connected reductive algebraic subgroup $G$ of $\Aut(X)$ 
	and assume that $X$ is not isomorphic to an algebraic torus.
	Moreover, let $\theta \colon \Aut(X) \to \Aut(Y)$ be a group isomorphism, where $Y$ is irreducible and affine.
	Then $\theta(\Cent_{\Aut(X)}(G)^\circ)$ is a closed algebraic torus in $\Aut(Y)$ of the same dimension 
	than $\Cent_{\Aut(X)}(G)^\circ$.
	Moreover, 
	\[
		\theta(\Cent_{\Aut(X)}(G)^\circ) = \Cent_{\Aut(Y)}(\theta(G))^\circ \, .
	\]
\end{proposition}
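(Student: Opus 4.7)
The plan is to analyse $C := \Cent_{\Aut(X)}(G)^\circ$ and its image $\theta(C)$ in $\Aut(Y)$ through the diagonalizable algebraic-group structure of $\Cent_{\Aut(X)}(G)$ provided by Corollary~\ref{Cor.Knop}, matching it with $C' := \Cent_{\Aut(Y)}(\theta(G))^\circ$. That corollary yields that $C$ is an algebraic torus of some dimension $r$ and $\Cent_{\Aut(X)}(G) = C \cdot F$ for a finite subgroup $F$. Since $\theta$ is a group isomorphism, $\Cent_{\Aut(Y)}(\theta(G)) = \theta(\Cent_{\Aut(X)}(G)) = \theta(C) \cdot \theta(F)$, so $\theta(C)$ already has finite index in the ambient centralizer on the $Y$-side.

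First I would show that $C'$ is an algebraic torus of dimension at most $r$. Commutativity of $C'$ follows because $\theta^{-1}(C') \subseteq \Cent_{\Aut(X)}(G)$ is commutative. By Corollary~\ref{Cor.UnipotentToUnipotent} applied to $\theta^{-1}$, any non-trivial unipotent of $C'$ would pull back to a non-trivial unipotent of the diagonalizable group $\Cent_{\Aut(X)}(G)$, so $C'$ has no non-trivial unipotents. Writing $C' = \bigcup_i K_i$ as an ascending union of commutative connected algebraic subgroups via Corollary~\ref{cor.CRX}, each $K_i$ is therefore a torus, and a $p$-torsion comparison between $\theta^{-1}(K_i) \subseteq C \cdot F$ and $C \cdot F$ itself for primes $p$ not dividing $|F|$ forces $\dim K_i \leq r$. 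The chain then stabilizes, and $C'$ is an algebraic torus of some dimension $d \leq r$.

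Next I would establish $\theta(C) = C'$ by divisibility arguments on both sides. For the inclusion $C' \subseteq \theta(C)$ (equivalently $\theta^{-1}(C') \subseteq C$): the subquotient $(\theta^{-1}(C') \cdot C)/C \cong \theta^{-1}(C')/(\theta^{-1}(C') \cap C)$ is both a subgroup of the finite group $(C \cdot F)/C$ and a quotient of the divisible torus $\theta^{-1}(C') \cong \GG_m^d$; a finite divisible abelian group is trivial, so $\theta^{-1}(C') \subseteq C$. For the reverse inclusion $\theta(C) \subseteq C'$: I would analyse the closure $\overline{\theta(C)} \subseteq \Cent_{\Aut(Y)}(\theta(G))$. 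The inclusions $C' \subseteq \theta(C) \subseteq \overline{\theta(C)}$ together with the closedness and connectedness of $C'$ force $\overline{\theta(C)}^\circ = C'$, and combining this with $C' \subseteq \theta(C)$ and the density of $\theta(C)$ in $\overline{\theta(C)}$ gives $\overline{\theta(C)} = \theta(C) \cdot C' = \theta(C)$, so $\theta(C)$ is already closed in $\Aut(Y)$. Lemma~\ref{lem.torus_to_closed_set_implies_torus} then confirms $\theta(C)^\circ = C'$ is a torus of dimension $\leq r$; it remains to verify that $\theta(C)/C'$ is trivial, which is where the divisibility of $\theta(C) \cong \GG_m^r$ combined with the finite-index structure inherited from $\Cent_{\Aut(Y)}(\theta(G)) = \theta(C) \cdot \theta(F)$ should deliver the conclusion.

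Combining both inclusions yields $\theta(C) = C'$, a closed algebraic torus in $\Aut(Y)$, and the $p$-torsion comparison $(\ZZ/p\ZZ)^r \hookrightarrow C' = \GG_m^d$ forces $d = r$. The main obstacle is the last connectedness step, namely showing the divisible quotient $\theta(C)/C'$ is trivial: the key datum is the finite index $[\Cent_{\Aut(Y)}(\theta(G)) : \theta(C)] = [\Cent_{\Aut(X)}(G) : C] < \infty$, and the challenge is to combine this with the divisibility of $\theta(C)$ and the rigidity ensured by the hypothesis that $X$ is not an algebraic torus (which, via Proposition~\ref{Lem.Image_rootsubgrp}, provides enough transferable root-subgroup structure) in order to rule out any infinite divisible component contribution in the closed ind-subgroup $\theta(C) \subseteq \Aut(Y)$.
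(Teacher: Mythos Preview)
Your outline has a genuine gap at exactly the point you flag as ``the main obstacle'', and the soft divisibility/torsion arguments you propose cannot close it. Concretely, you establish $C' \subseteq \theta(C)$ correctly, but the reverse inclusion fails without further input: the quotient $\theta(C)/C'$ is a divisible abelian group (as a quotient of $(\kk^\ast)^r$), and it is at most countable (since $C'$ is the neutral component of a closed ind-subgroup), but countable divisible abelian groups such as $\QQ$ or $\QQ/\ZZ$ are nontrivial, so divisibility alone does not force triviality. Your intermediate claim that $\overline{\theta(C)} = \theta(C)$ is also unjustified: density of $\theta(C)$ in its closure together with $C' \subseteq \theta(C)$ would give this only if $C'$ were \emph{open} in $\overline{\theta(C)}$, and in an ind-group the neutral component need not be open. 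Finally, your $p$-torsion comparison $(\ZZ/p\ZZ)^r \hookrightarrow C'$ at the end presupposes $\theta(C)=C'$, so it is circular.

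The paper supplies precisely the missing mechanism, and it is where the hypothesis ``$X$ is not a torus'' enters concretely rather than heuristically. One uses Proposition~\ref{Prop.Existence_of_root_subgroups} (which needs a nontrivial unipotent radical, hence a root subgroup, hence $X \not\simeq T$) to produce $D$-root subgroups $U_1,\ldots,U_r$ with linearly independent $D$-weights. Each $D_i = \bigcap_{j\neq i}\ker(\lambda_j)$ is then the centralizer of $G$ together with the $U_j$, $j\neq i$, so $\theta(D_i)$ is \emph{closed} in $\Aut(Y)$ and hence $\theta(D_i)^\circ \simeq \GG_m$ by Lemma~\ref{lem.torus_to_closed_set_implies_torus}; this already gives $\dim \theta(D)^\circ \geq r$. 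For the identification $\theta(D^\circ)=\theta(D)^\circ$ one transfers the $U_i$ via Proposition~\ref{Lem.Image_rootsubgrp} and lets $\xi_i \colon \theta(D) \to \GG_m$ be the weight of the conjugation action on $\theta(U_i)\simeq \GG_a$. The joint kernel of $\xi=(\xi_1,\ldots,\xi_r)$ is the finite group $\theta(K)$, and $\xi|_{\theta(D)^\circ}$ is surjective onto $\GG_m^r$; these two facts force $\theta(D)^\circ$ to have \emph{finite} index in $\theta(D)$, after which your divisibility argument (Lemma~\ref{Lem.divisible_elements}) legitimately finishes. In short, the root-subgroup characters $\xi_i$ are the device that converts the abstract isomorphism into a finite-index statement, and your outline does not produce any substitute for them.
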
	

\begin{proof}
	Let $D \coloneqq \Cent_{\Aut(X)}(G)$. By Corollary~\ref{Cor.Knop} we get that $D$
	is a diagonalizable algebraic subgroup of $\Aut(X)$. Denote $r \coloneqq \dim D$.
	Let $B$ be a Borel subgroup of $G$. In case $B$ is not a torus, let $V = R_u(B)$ be the unipotent radical of $B$ 
	and otherwise, let $V$ be any $B$-root subgroup of $\Aut(X)$ (which exists, since $X$ is not isomorphic to a torus).
	Note that in the first case $BV = B$, whereas in the second case $D = B = G$ (due to Proposition~\ref{Lem.Centralizer_open_dense_orbit}\eqref{Lem.Centralizer_open_dense_orbit1}).
	Due to Proposition~\ref{Prop.Existence_of_root_subgroups}\eqref{Prop.Existence_of_root_subgroups1}
	applied to the solvable algebraic group $H \coloneqq DBV$
	there exist $DBV$-root subgroups
	$U_1, \ldots, U_r \subseteq \Aut(X)$ of linearly independent $D$-weights $\lambda_1 , \ldots, \lambda_r \colon D \to \GG_m$.
	For all $i=1, \ldots, r$, let
	\begin{eqnarray*}
		D_i &\coloneqq& \bigcap_{j \neq i} \ker(\lambda_j) = 
		\set{d \in D}{\textrm{$d$ commutes with all $u_j \in U_j$ for all $j \neq i$}} \\
		K &\coloneqq& \bigcap_{j = 1}^r \ker(\lambda_j) = \bigcap_{j=1}^r D_j \, .
	\end{eqnarray*}
	Note that $\dim D_i = 1$ for all $i$ and that $K$ is finite by Lemma~\ref{lem.Simultaneous_kernels}. 

	Since $D_i$ is the centralizer of $G$ and $U_1, \ldots, U_{i-1}, U_{i+1}, \ldots, U_r$ in $\Aut(X)$,
	it follows that $\theta(D_i)$ is closed in $\Aut(Y)$. Since $\dim D_i =1$, 
	we get
	\[	
	\theta(D_i)^\circ \simeq \GG_m \quad \textrm{for all $i = 1, \ldots, r$}	
	\]
	 by Lemma \ref{lem.torus_to_closed_set_implies_torus} 
	(here we use that $\theta(D_i)^\circ$ is infinite, since $\kk$ is uncountable).
%	As $D_i^\circ$ is the subgroup of divisible elements in $D_i$, $\theta(D_i^\circ)$ is the subgroup of divisible elements in 
%	$\theta(D_i)$. Since $\theta(D_i)^\circ \simeq \GG_m$, it follows that $\theta(D_i)^\circ$ is a divisible group.
%	Hence,
%	\[
%	\theta(D_i)^\circ \subseteq \theta(D_i^\circ) \quad \textrm{for all $i=1, \ldots, r$} \, .
%	\]
	
	\begin{claim}
		\label{Claim.theta(D)circ}
		$\theta(D)^\circ$ is a closed torus of dimension $r$ in $\Aut(Y)$.
	\end{claim}

	Indeed, by Lemma~\ref{lem.torus_to_closed_set_implies_torus} it follows that $\theta(D)^\circ$ is a closed subtorus
	in $\Aut(Y)$ of dimension $\leq r$, as $\theta(D) = \Cent_{\Aut(Y)}(\theta(G))$
	is closed in $\Aut(Y)$. Consider the homomorphism of algebraic groups
	\[
	\eta \colon D_1 \times \cdots \times D_r \to D \, , \quad (d_1, \ldots, d_r) \mapsto d_1 \circ \cdots \circ d_r \, .
	\]
	If $(d_1, \ldots, d_r) \in D_1 \times \cdots \times D_r$ and $d_1 \circ \cdots \circ d_r = \id_X$, then
	$d_i$ commutes with all elements of $U_i$ and hence $d_i \in K$ for all $i$. 
	Thus,  the homomorphism
	$\eta$ has a finite kernel. Since $\theta(D_i)^\circ \simeq \GG_m$
	for all $i = 1, \ldots, r$,
	\[
	\rho \colon \theta(D_1)^\circ \times \cdots \times \theta(D_r)^\circ \to \theta(D)^\circ \, ,
	\quad  (t_1, \ldots, t_r) \mapsto t_1 \circ \cdots \circ t_r
	\]
	is a homomorphism of algebraic groups with a finite kernel and therefore $\dim \theta(D)^\circ \geq r$.
	This establishes Claim~\ref{Claim.theta(D)circ}.
	
	\begin{claim}
		\label{Claim.theta(Dcirc)}
		$\theta(D^\circ) = \theta(D)^\circ$
	\end{claim}
	
	Indeed, by Proposition~\ref{Lem.Image_rootsubgrp} applied to the connected algebraic subgroup $(DB)^\circ$ and the $(DB)^\circ$-root-subgroup $U_i$
	it follows that $\theta(U_i)$ is a $\theta((DB)^\circ)$-root subgroup of $\Aut(Y)$ for all $i$.
	The ind-group $\theta(D)$ acts by conjugation on $\theta(U_i) \simeq \GG_a$
	and induces thus a homomorphism of ind-groups $\xi_i \colon \theta(D) \to \Aut_{\grp}(\theta(U_i)) \simeq \GG_m$.
	Note that the kernel of the ind-group homomorphism
	\[
	\xi \coloneqq (\xi_1, \ldots, \xi_r) \colon \theta(D) \to \GG_m^r
	\]	
	is equal to the finite group $\theta(K)$. By Claim~\ref{Claim.theta(D)circ}, the restriction 
	$\xi |_{\theta(D)^\circ} \colon \theta(D)^\circ \to \GG_m^r$ is a homomorphism of 
	algebraic groups which is surjective, since $\theta(D)^\circ \simeq \GG_m^r$ and $\xi |_{\theta(D)^\circ}$ has a finite kernel. 
	Now, if $\theta(d) \in \theta(D)$, then there exists $d_0 \in D$ with $\theta(d_0) \in \theta(D)^\circ$ such that
	$\xi(\theta(d)) = \xi(\theta(d_0))$. In particular, there exists $k \in K$ with $\theta(d) = \theta(d_0) \theta(k)$. As
	$K$ is finite, we get that $\theta(D)^\circ$ has finite index in $\theta(D)$. Thus $\theta(D)$ is a closed diagonalizable algebraic subgroup
	of $\Aut(Y)$. As $D^\circ$ is the subgroup of divisible elements of $D$ (see Lemma~\ref{Lem.divisible_elements}), $\theta(D^\circ)$ is the subgroup
	of divisible elements in $\theta(D)$ and hence we get $\theta(D)^\circ = \theta(D^\circ)$.
	This proves Claim~\ref{Claim.theta(Dcirc)}.
	
	\medskip
	
	The Claims~\ref{Claim.theta(D)circ} and~\ref{Claim.theta(Dcirc)} imply thus the statement of the proposition.
\end{proof}

\begin{proposition}
	\label{Prop.Algebraic_homomorphism_on_torus}
	Let $\theta \colon \Aut(X) \to \Aut(Y)$ be an isomorphism of groups, where $X, Y$ are irreducible affine varieties.
	Assume that there is an algebraic torus $T \subseteq \Aut(X)$ of dimension $n$ such that
	$\theta(T) \subseteq \Aut(Y)$ is an algebraic subtorus. 
	Assume further that there are $T$-root subgroups
	$U_1, \ldots, U_{n}$ in $\Aut(X)$ of  linearly independent $T$-weights 
	$\lambda_1, \ldots, \lambda_{n} \colon T \to \GG_m$. Then
	there exist one-dimensional subtori $T_1, \ldots, T_n \subseteq T$ such that $T  = T_1 \cdots T_n$
	and $\theta(T_i)$ is a closed one-dimensional subtorus in $\Aut(Y)$ for all $i = 1, \ldots, n$.
\end{proposition}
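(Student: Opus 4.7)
The plan is to take $T_i$ to be the connected component of $\bigcap_{j\neq i}\ker(\lambda_j)$ inside $T$. These one-dimensional subtori are characterized in $T$ as the (identity components of the) centralizers of all $U_j$ with $j\neq i$, so applying $\theta$ transports this description into $\Aut(Y)$, where the closedness of $\theta(T)$ together with the fact that centralizers in an ind-group are closed forces the image to land inside an algebraic torus.

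First I set $S_i \coloneqq \bigcap_{j\neq i}\ker(\lambda_j)\subseteq T$ and $T_i \coloneqq S_i^\circ$. Since $\lambda_1,\ldots,\lambda_n$ are linearly independent in $\frak{X}(T)$, Lemma~\ref{lem.Simultaneous_kernels} gives $S_i \simeq \GG_m\times F_i$ for some finite group $F_i$, so $T_i \simeq \GG_m$. To see that $T=T_1\cdots T_n$, I observe that the differentials $d\lambda_k$ form a basis of $\Lie(T)^*$, so $\Lie(T_i) = \bigcap_{j\neq i}\ker(d\lambda_j)$ is one-dimensional and not contained in $\ker(d\lambda_i)$. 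The spaces $\Lie(T_i)$ are therefore linearly independent in $\Lie(T)$ and together span it, so the multiplication map $T_1\times\cdots\times T_n\to T$ has surjective differential and its image, being a connected subgroup of full dimension, equals $T$.

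Next I would transport everything across $\theta$. Since $U_j$ has $T$-weight $\lambda_j$, one has $\ker(\lambda_j) = T \cap \Cent_{\Aut(X)}(U_j)$, so
\[
S_i = T \cap \bigcap_{j\neq i}\Cent_{\Aut(X)}(U_j).
\]
Applying the group isomorphism $\theta$ (which satisfies $\theta(\Cent_{\Aut(X)}(V))=\Cent_{\Aut(Y)}(\theta(V))$ for any subset $V$) yields
\[
\theta(S_i) = \theta(T) \cap \bigcap_{j\neq i}\Cent_{\Aut(Y)}(\theta(U_j)).
\]
Centralizers in the ind-group $\Aut(Y)$ are closed and $\theta(T)$ is closed by hypothesis, so $\theta(S_i)$ is closed in $\Aut(Y)$. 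As a closed subgroup of the algebraic torus $\theta(T)$, the group $\theta(S_i)$ is diagonalizable, and hence $\theta(S_i)^\circ$ is a closed subtorus of $\theta(T)$.

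Finally I would identify $\theta(T_i)$ with $\theta(S_i)^\circ$ and pin down the dimension. By Lemma~\ref{Lem.divisible_elements} the connected component of a commutative algebraic group is exactly its subgroup of divisible elements; since divisibility is an abstract group-theoretic condition preserved by $\theta$, this yields $\theta(T_i) = \theta(S_i^\circ) = \theta(S_i)^\circ$. The $p$-torsion comparison used in the proof of Lemma~\ref{lem.torus_to_closed_set_implies_torus} then gives $\dim\theta(T_i) \leq \dim T_i = 1$, and $\theta(T_i)$ is infinite because it is abstractly isomorphic to $T_i \simeq \GG_m$; hence $\dim\theta(T_i)=1$. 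The only mildly delicate step is the identification $\theta(T_i)=\theta(S_i)^\circ$, and this is precisely where Lemma~\ref{Lem.divisible_elements} is used.
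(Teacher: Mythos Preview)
Your proof is correct and follows essentially the same approach as the paper: define $S_i=\bigcap_{j\neq i}\ker(\lambda_j)$ and $T_i=S_i^\circ$, use the centralizer description of $S_i$ together with closedness of $\theta(T)$ to see that $\theta(S_i)$ is a closed diagonalizable subgroup of $\Aut(Y)$, then invoke Lemma~\ref{Lem.divisible_elements} twice to identify $\theta(T_i)=\theta(S_i)^\circ$ and finish with a $p$-torsion count. The only cosmetic difference is that the paper verifies $T=T_1\cdots T_n$ by showing the multiplication map $T_1\times\cdots\times T_n\to T$ has finite kernel (contained in $K^n$ where $K=\bigcap_i\ker(\lambda_i)$), whereas you use a Lie-algebra argument; both are equally valid.
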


\begin{proof}
	For all $i=1, \ldots, n$, let $T_i = S_i^\circ$, where
	\begin{equation}
		\label{Eq.Def_Si}
		S_i \coloneqq \bigcap_{j\neq i} \ker(\lambda_j) = 
		\set{ t \in T}{\textrm{$t$ commutes with all $u_j \in U_j$ for all $j \neq i$}} \, .
	\end{equation}
	Since the $T$-weights $\lambda_1, \ldots, \lambda_n$ are linearly independent, it follows that
	$\dim T_i = 1$ for all $i$ and the intersection $K = \bigcap_{i=1}^n \ker(\lambda_i)$ is finite, 
	see Lemma~\ref{lem.Simultaneous_kernels}. Note that the kernel of the multiplication map
	$T_1 \times \cdots \times T_n \to T$ lies in the finite group $K^n$ and thus $T_1 \cdots T_n = T$.
	By Lemma~\ref{Lem.divisible_elements}, $T_i$ consists of those elements in $S_i$	
	that are divisible in $S_i$. Hence, $\theta(T_i)$ consists of those elements in $\theta(S_i)$
	that are divisible in $\theta(S_i)$. However,
	since $\theta(T) \subset \Aut(Y)$ is a closed subtorus, the subgroup 
	$\theta(S_i) \subset \theta(T)$ is closed in $\Aut(Y)$ (see~\eqref{Eq.Def_Si}) and thus it is diagonalizable.
	By Lemma~\ref{Lem.divisible_elements}, 
	$\theta(T_i) = \theta(S_i)^\circ$ is a closed torus in $\Aut(Y)$. Since the $p$-torsion subgroup
	of $\theta(T_i)$ is isomorphic to $\ZZ/p \ZZ$ for all primes $p$, it follows that $\dim \theta(T_i) = 1$
	for all $i =1, \ldots, n$.
\end{proof}

Now, we come to the main result of this section.

\begin{theorem}\label{Prop.Preserving_alg_groups}
	Let $\kk$ be uncountable.
	Let $X$ be a $G$-spherical affine variety different from an algebraic torus for a connected reductive algebraic group $G$,
	let $B \subseteq G$ be a Borel subgroup and let 
	$\theta \colon \Aut(X) \to \Aut(Y)$ be a group isomorphism (where $Y$ is irreducible and affine).
	Then there exists a field automorphism $\tau$ of $\kk$ such that:
	\begin{enumerate}[label=\alph*), wide=0pt, leftmargin=*]
		\item \label{Prop.Preserving_alg_groups1} $\theta(G)$ is an algebraic subgroup of $\Aut(Y)$ and 
		$\theta \circ \tau_G \colon G^{\tau} \to \theta(G)$ is 
		an isomorphism of algebraic groups.
		\item \label{Prop.Preserving_alg_groups2}
		Let $H$ be a closed connected algebraic subgroup of $G$ and let
		$U^\tau \subseteq \Aut(X^\tau)$ 
		be a non-trivial $H^\tau$-root subgroup of weight $\chi \colon H^\tau \to \GG_m$. Then 
		$\theta(U)$ is a non-trivial $\theta(H)$-root subgroup of weight 
		\[
		\chi' \colon \theta(H) 
		\xrightarrow{ (\tau_H)^{-1} \circ (\theta |_H)^{-1}} H^{\tau} \xrightarrow{\chi} \GG_m \, 
		\]
		and $\theta |_U \circ \tau_U \colon  U^\tau \to \theta(U)$ is an isomorphism of algebraic groups.
		\item \label{Prop.Preserving_alg_groups3}  
		Let $H$ be a closed connected subgroup of $G$. Then
		for every $d \geq 1$
		we have a bijective correspondence 
		\[
			U \mapsto \theta(U)
		\] between the non-trivial  generalized $H$-root subgroups in $\Aut(X)$
		of dimension $d$ and the non-trivial generalized
		$\theta(H)$-root subgroups of $\Aut(Y)$ of dimension $d$.
		\item \label{Prop.Preserving_alg_groups4}   
		Let $H$ be a closed connected subgroup of $G$ such that all
		$H$-root subgroups of $\Aut(X)$ are non-trivial. Then all $\theta(H)$-root subgroups
		of $\Aut(Y)$ are non-trivial.
	\end{enumerate}
\end{theorem}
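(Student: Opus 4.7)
The plan is to prove the four parts in order, with (a) doing the heavy lifting and (b)--(d) following more formally. For (a), I would decompose $G = Z \cdot G_1 \cdots G_s$, where $Z = Z(G)^\circ$ is the connected center (a torus) and the $G_i$ are the connected simple factors of $[G,G]$, and then construct a single field automorphism $\tau$ making $\theta \circ \tau_G$ algebraic on each piece. For the central torus $Z \subseteq \Cent_{\Aut(X)}(G)^\circ$, Proposition~\ref{Prop.Centralizer_of_G_in_spherical} already tells us that $\theta(Z)$ lies inside a closed algebraic torus of $\Aut(Y)$. For each simple factor $G_i$ the ultimate goal is Proposition~\ref{Prop.Isom_of_simple_alg_grps}, but that proposition requires $\theta(G_i)$ to already be a connected simple algebraic group, so I would first exploit sphericity: a Borel $B \subseteq G$ and its opposite $B^-$ both act with a dense orbit on $X$, so Proposition~\ref{Lem.Image_rootsubgrp}\ref{Lem.Image_rootsubgrp2} shows each root subgroup $U_\alpha \subseteq G_i$ has $\theta(U_\alpha)$ a closed subgroup isomorphic to $\GG_a$ in $\Aut(Y)$, while Proposition~\ref{Prop.Algebraic_homomorphism_on_torus} provides one-dimensional subtori of a maximal torus $T \subseteq B$ whose $\theta$-images are closed one-dimensional subtori.

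With these ingredients in place I would apply Lemma~\ref{Lem.Constructing_Iso_of_alg_grps} to each solvable subgroup $T U_\alpha$ (with $\alpha$ a simple root) to obtain a field automorphism $\tau_\alpha$ making $\theta \circ (\tau_\alpha)_{T U_\alpha}$ an algebraic group isomorphism, and then use Lemma~\ref{Lem.Same_field_auto} to check that all the $\tau_\alpha$ agree with a common $\tau$. Since each $G_i$ is generated by the $T U_\alpha$'s for positive and negative simple roots and $G$ is generated by $Z$ together with the $G_i$'s, the criterion \cite[Lemma~5.9]{LiReUr2023Characterization-o} assembles these local isomorphisms into a global isomorphism of algebraic groups $\theta \circ \tau_G \colon G^\tau \to \theta(G)$, proving (a).

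Part (b) is then mostly formal: $\theta(H)$ is algebraic by (a), Proposition~\ref{Lem.Image_rootsubgrp}\ref{Lem.Image_rootsubgrp1} identifies $\theta(U)$ as a non-trivial $\theta(H)$-root subgroup of the claimed dimension, and the weight formula is a direct conjugation computation. To show $\theta|_U \circ \tau_U$ is an algebraic group isomorphism, in the one-dimensional case I would apply Lemma~\ref{Lem.Constructing_Iso_of_alg_grps} to the solvable subgroup $D U$ (with $D \subseteq H$ a one-dimensional subtorus acting non-trivially on $U$), match the resulting field automorphism with $\tau$ via Lemma~\ref{Lem.Same_field_auto}, and for higher-dimensional $U$ decompose into one-dimensional $H$-invariant pieces. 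Part (c) has the forward direction immediate from Proposition~\ref{Lem.Image_rootsubgrp}\ref{Lem.Image_rootsubgrp1} together with (a); for the reverse direction, given a non-trivial generalized $\theta(H)$-root subgroup $V \subseteq \Aut(Y)$, I would set $U = \theta^{-1}(V)$, note that $U$ is commutative unipotent by Corollary~\ref{Cor.UnipotentToUnipotent}, and use the algebraic structure of $\theta|_G$ from (a) to pull back the algebraic root-subgroup structure from $V$ to $U$. Part (d) then follows by combining (b) and (c).

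The main obstacle is showing that $\theta(G_i)$ carries enough algebraic structure to trigger Proposition~\ref{Prop.Isom_of_simple_alg_grps}, since an abstract group isomorphism $\Aut(X) \to \Aut(Y)$ does not a priori respect the ind-topology. Algebraic structure has to be rebuilt piece by piece --- on the center via centralizer considerations, on root subgroups via the dense-orbit criterion, and on each solvable piece $T U_\alpha$ via Lemma~\ref{Lem.Constructing_Iso_of_alg_grps} --- and the delicate compatibility check that these locally constructed field automorphisms patch to a single global $\tau$ is exactly what Lemma~\ref{Lem.Same_field_auto} is designed to provide.
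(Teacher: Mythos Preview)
Your overall architecture for part~(a) is close to the paper's, but there is a genuine ordering gap. You invoke Proposition~\ref{Prop.Algebraic_homomorphism_on_torus} for a maximal torus $T \subseteq B$, yet that proposition has as a hypothesis that $\theta(T)$ is already a closed algebraic subtorus of $\Aut(Y)$ --- something you have not established at that point. Likewise, your application of Lemma~\ref{Lem.Constructing_Iso_of_alg_grps} to $TU_\alpha$ requires $\theta(TU_\alpha)$ to be an algebraic group, which again needs $\theta(T)$ algebraic. Proposition~\ref{Prop.Centralizer_of_G_in_spherical} only gives this for the \emph{central} torus $D = \Cent_{\Aut(X)}(G)^\circ$, not for a full maximal torus of $G$.

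The paper breaks this circularity as follows. First, it shows each $\theta(G_i)$ is algebraic using only \emph{one} root subgroup $U_i$ (central in $R_u(B_i)$) together with its $G_i$-conjugates: since $G_i = g_{i,1}U_ig_{i,1}^{-1}\cdots g_{i,m_i}U_ig_{i,m_i}^{-1}$ and each $\theta(g_{i,j}U_ig_{i,j}^{-1}) \cong \GG_a$, the product $\theta(G_i)$ is algebraic. (Your idea of using both $B$ and $B^-$ to handle all $U_\alpha$ works equally well for this step and is a legitimate variant.) Only \emph{then} does the paper apply Proposition~\ref{Prop.Isom_of_simple_alg_grps} to obtain $\tau_i$, which a posteriori makes $\theta(T_i)$ a closed one-dimensional torus for any chosen $T_i \subseteq B_i$. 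Proposition~\ref{Prop.Algebraic_homomorphism_on_torus} is applied only to $D$, and finally a \emph{single} application of Lemma~\ref{Lem.Constructing_Iso_of_alg_grps} --- to one carefully chosen solvable group $U D_1 \cdots D_r T_1 \cdots T_n$ built via Proposition~\ref{Prop.Existence_of_root_subgroups}\eqref{Prop.Existence_of_root_subgroups2} --- produces a $\tau$ that Lemma~\ref{Lem.Same_field_auto} matches with every $\tau_i$.

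A smaller issue: your argument for (d) (``follows from (b) and (c)'') does not work as stated, since (b) and (c) concern only \emph{non-trivial} root subgroups. The paper instead argues directly: if $V$ were a trivial $\theta(H)$-root subgroup, Corollary~\ref{Cor.UnipotentToUnipotent} would force $\theta^{-1}(V)$ to contain non-trivial unipotent elements commuting with $H$, and the closure of such an element yields a trivial $H$-root subgroup, contradicting the hypothesis.
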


\begin{proof}
	We may assume $\dim G \geq 1$.
	
	\ref{Prop.Preserving_alg_groups1}: 
	Let $D \coloneqq \Cent_{\Aut(X)}(G)^\circ$.  By Corollary~\ref{Cor.Knop} $D$ is a torus.
	Then $G' \coloneqq D G$ is still a connected reductive algebraic subgroup
	of $\Aut(X)$
	and $X$ is $G'$-spherical. Hence, we may replace $G$ by $G'$ and assume that $D$ is a subgroup of $G$.
	
	As $G$ is connected and reductive, $[G, G]$ is semi-simple or trivial, and
	the neutral  component of the center $Z(G)^\circ$ of $G$ is a central torus in $G$ such that $G = Z(G)^\circ [G, G]$
	and the intersection $Z(G)^\circ \cap [G, G]$ is finite, see
	\cite[IV, Proposition 14.2]{Bo1991Linear-algebraic-g}. Since $D$ is contained in $G$, 
	it follows that 
	$D = Z(G)^\circ$. By Proposition~\ref{Prop.Centralizer_of_G_in_spherical} it follows that
	$\theta(D)$ is a closed algebraic subtorus of $\Aut(Y)$ of the same dimension than $D$.
	
	\begin{claim}
		\label{Claim.thetaG_closed}
		The image $\theta(G)$ is a closed algebraic subgroup of $\Aut(Y)$.
	\end{claim}
	
	Indeed, if $G$ is commutative (i.e.~it is a torus), then $G = D$ and we already proved Claim~\ref{Claim.thetaG_closed} in the last paragraph.
	Hence, we may assume that $G$ is non-commutative.
	Write $[G, G] = G_1 \cdots G_n$, where $n \geq 1$ and each $G_i$ is a simple
	algebraic subgroup of $[G, G]$ \cite[Theorem~27.5]{Hu1975Linear-algebraic-g} such that
	the multiplication map $G_1 \times \cdots \times G_n \to [G, G]$ is a central isogeny and
	$G_1, \ldots, G_n$ commute pairwise. Then 
	the multiplication map $\rho \colon D \times G_1 \times \cdots \times G_n \to G$ is a central isogeny as well.
	Let $B_i \coloneqq \pi_i(\rho^{-1}(B))^\circ$ where $\pi_i \colon D \times G_1 \times \cdots \times G_n \to G_i$
	denotes the canonical projection.
	Then $B_i \subseteq G$ is a Borel subgroup of $G_i$, see~\cite[Corollary~21.3C]{Hu1975Linear-algebraic-g}.

	Let $\GG_a\simeq U_i \subseteq B_i$ be a $B_i$-root subgroup in the center of the unipotent radical of $B_i$. 
	Then $U_i$ is a non-trivial $B_i$-root subgroup. Note that the closure $H_i$ of the group generated
	by all $g_iU_ig_i^{-1}$, $g_i \in G_i$ is a closed normal infinite algebraic subgroup of $G_i$. As $G_i$ is simple, we get that
	$H_i = G_i$. 
	Using \cite[Proposition~7.5]{Hu1975Linear-algebraic-g}, there exist $g_{i, 1}, \ldots, g_{i, m_i} \in G_i$ with
	\begin{equation}
		\label{Eq.Composition_of_G_i}
	g_{i, 1} U_i g_{i, 1}^{-1} \cdots g_{i, m_i} U_i g_{i, m_i}^{-1} = G_i \, .
	\end{equation}
	Note that $\rho(D \times B_1 \times \cdots \times B_n) = DB_1 \cdots B_n$ 
	is a Borel subgroup of $G = D [G, G]$ (see \cite[Corollary~21.3C]{Hu1975Linear-algebraic-g}) 
	that normalizes, but not centralizes $U_i$
	for all $i=1, \ldots, n$. Since $B$ is a Borel subgroup 
	contained in $(D \pi_1( \rho^{-1}(B)) \cdots \pi_n( \rho^{-1}(B)))^\circ = DB_1 \cdots B_n$, it follows that
	\[
	B = DB_1 \cdots B_n \, .
	\]
	By Proposition~\ref{Lem.Image_rootsubgrp} applied to $B$ it follows that
	$\theta(U_i)$ is a non-trivial $\theta(B)$-root subgroup of $\Aut(Y)$ for all $i$.
	Thus $\theta(g_{i, j}U_i g_{i, j}^{-1})$ is isomorphic to $\GG_a$ for all $i, j$. Hence it follows from~\eqref{Eq.Composition_of_G_i} 
	that $\theta(G_i)$
	is a closed algebraic subgroup of $\Aut(Y)$ for all $i$. We conclude that $\theta(G) = \theta(D)\theta(G_1) \cdots \theta(G_n)$ is a closed algebraic subgroup of $\Aut(Y)$  as well.
	This finishes the proof of Claim~\ref{Claim.thetaG_closed}.
	
	\medskip
	
	By \cite[Corollary~29.5]{Hu1975Linear-algebraic-g} it follows that $G_i / Z(G_i)$ is simple as an abstract group,
	where $Z(G_i)$ denotes the center of $G$. Hence $\theta(G_i) / Z(\theta(G_i))$ is simple as an abstract group and thus
	$\theta(G_i)$ is a simple algebraic group. Thus, by Proposition~\ref{Prop.Isom_of_simple_alg_grps}
	there exists a field automorphism $\tau_i$ of $\kk$ such that
	\begin{equation}
		\label{Eq.Isom_G_i}
		\theta |_{G_i} \circ (\tau_i)_{G_i} \colon G_i^{\tau_i} \to \theta(G_i)
		\quad \textrm{is an isomorphism of algebraic groups}
	\end{equation}
	for $i =1, \ldots, n$. Let $T_i$ be a one-dimensional torus in $B_i$. 
	Hence, $\theta(T_i)$ is a closed subgroup in $\theta(G_i)$ isomorphic to $\GG_m$.
	Sine $T_i$ and $T_j$ commute for all $i, j$ and since $T_i$ commutes with $D$ for all $i$,
	there exists a maximal torus $T \subseteq B$ that contains $D$ and $T_i$ for all $i =1, \ldots, n$.  
	If $G$ is not a torus, let $V$ be a $B$-root subgroup of the center of $R_u(B)$
	and in case $G$ is a torus, let $V$ be any $B$-root subgroup of $\Aut(X)$. 
	Let $r = \dim D$. 
	By Proposition~\ref{Prop.Existence_of_root_subgroups}\eqref{Prop.Existence_of_root_subgroups1} applied
	to the solvable algebraic group $DV$ there exist
	$D$-root subgroups $U_1, \ldots, U_r$ such that the $D$-weights $\lambda_1, \ldots, \lambda_r \colon D \to \GG_m$
	are linearly independent. Due to Proposition~\ref{Prop.Algebraic_homomorphism_on_torus} applied to the torus $D$
	there exist one-dimensional tori $D_1, \ldots, D_r \subseteq D$ such that $D = D_1 \cdots D_r$, 
	$\theta(D_i)$ is closed in $\theta(D)$ and $\theta(D_i) \simeq \GG_m$ as algebraic groups for all 
	$i = 1, \ldots, r$.
	By Proposition~\ref{Prop.Existence_of_root_subgroups}\eqref{Prop.Existence_of_root_subgroups2} applied
	to the solvable algebraic group $BV$ and the tori $D_1, \ldots, D_r, T_1, \ldots, T_n$ inside $T \subseteq BV$  
	there exists a $T$-root subgroup $U \subseteq \Aut(X)$ such that non of the tori 
	$D_1, \ldots, D_r, T_1, \ldots, T_n$ commutes with $U$. By Proposition~\ref{Lem.Image_rootsubgrp}
	applied to one of the tori $D_1, \ldots, D_r, T_1, \ldots, T_n$
	it follows that $\theta(U)$ is closed in $\Aut(Y)$ and isomorphic to $\GG_a$. Let 
	$H \coloneqq UD_1 \cdots D_rT_1 \cdots T_n$.
	Then 
	\[
	\theta(H) = \theta(U) \theta(D_1) \cdots \theta(D_r) \theta(T_1) \cdots \theta(T_n) 
	\subseteq \Aut(Y)
	\]
	is a solvable algebraic subgroup of $\Aut(Y)$. Since $\theta(U) \simeq \GG_a$ and
	$\theta(D_i) \simeq \theta(T_j) \simeq \GG_m$ for all $i =1, \ldots, r$, $j = 1, \ldots, n$, 
	we may apply Lemma~\ref{Lem.Constructing_Iso_of_alg_grps} to $\theta |_H \colon H \to \theta(H)$
	in order to get a field automorphism $\tau$ of $\kk$ such that
	\begin{equation}
		\label{Eq.Isom_H}
		\theta |_H \circ \tau_H \colon H^\tau \to \theta(H)
		\quad \textrm{is an isomorphism of algebraic groups} \, .
	\end{equation}
	Using,~\eqref{Eq.Isom_G_i} and~\eqref{Eq.Isom_H}, Lemma~\ref{Lem.Same_field_auto} 
	implies that $\tau = \tau_i$ for all $i =1, \ldots, n$. Since $D = D_1 \cdots D_r \subseteq H$, 
	it follows again from~\eqref{Eq.Isom_H} that $\theta|_G \circ \tau_G \colon G^\tau \to \theta(G)$
	restricted to $D^\tau$ is a homomorphism of algebraic groups and from~\eqref{Eq.Isom_G_i} the
	same holds for $G_i^\tau$, $i=1, \ldots, n$. Using that $G^\tau = D^\tau G_1^\tau \cdots G_n^\tau$,
	\cite[Lemma~5.9]{LiReUr2023Characterization-o} gives that 
	$\theta|_G \circ \tau_G \colon G^\tau \to \theta(G)$ is an isomorphism of algebraic groups.
	
	\ref{Prop.Preserving_alg_groups2}:
	Using~\ref{Prop.Preserving_alg_groups1} it follows that $\theta(H)$ is an algebraic subgroup 
	in $\Aut(Y)$ and 
	\[
	H^\tau \xrightarrow{\tau_H} H \xrightarrow{\theta|_H} \theta(H)
	\] 
	is an isomorphism of algebraic groups. 
	As $U^\tau$ is a non-trivial $H^\tau$-subgroup,
	Proposition~\ref{Lem.Image_rootsubgrp} implies that 
	$\theta(U)$ is a non-trivial $\theta(H)$-root subgroup of $\Aut(Y)$.
	Let $T \subseteq H$ be any one-dimenional torus that does not commute with $U$. 
	Then $\theta(T)$ is a closed subgroup of $\theta(H)$
	and $\theta(T) \simeq \GG_m$. We apply Lemma~\ref{Lem.Constructing_Iso_of_alg_grps}
	to $\theta |_{UT} \colon UT \to \theta(UT)$ in order to get a 
	field automorphism $\sigma$ of $\kk$ such that 
	$\theta |_{UT} \circ\sigma_{UT} \colon (UT)^\sigma \to \theta(UT)$ is an isomorphism of algebraic
	groups. Lemma~\ref{Lem.Same_field_auto} yields $\tau = \sigma$ and hence
	$\theta|_U \circ \tau_U \colon U^\tau \to \theta(U)$ is an isomorphism of 
	algebraic groups. 
	
	Let $\varepsilon \colon \GG_a \to U^\tau$ be an isomorphism of algebraic groups. 
	For $h \in H$, we denote $h^\tau \coloneqq (\tau_H)^{-1}(h) \in H^\tau$.
	By definition, we have
	\[
	\varepsilon( \chi(h^{\tau}) \cdot s) = h^{\tau} \circ \varepsilon(s)  \circ (h^\tau)^{-1} \quad \textrm{for all $s \in \GG_a$ 
		and all $h^{\tau} \in H^\tau$} \, .
	\]
	Hence, $\varepsilon' \coloneqq \theta |_U \circ \tau_U \circ \varepsilon \coloneqq \GG_a \to \theta(U)$ is an
	isomorphism of algebraic groups that satisfies for all $h \in H$ and $s \in \GG_a$:
	\begin{eqnarray*}
		\varepsilon'(\chi'(\theta(h)) \cdot s) &=& (\theta |_U \circ \tau_U \circ \varepsilon)(\chi(h^\tau)\cdot s) \\
		&=& (\theta |_U \circ \tau_{U})(h^{\tau} \circ \varepsilon(s)  \circ (h^\tau)^{-1}) \\
		&=& \theta(h \circ \tau_U(\varepsilon(s)) \circ h^{-1}) ) \\
		&=& \theta(h) \circ \varepsilon'(s) \circ \theta(h)^{-1} \, ,
	\end{eqnarray*}
	i.e. $\theta(U)$ is a $\theta(H)$-root subgroup of weight $\chi'$.
	
	\ref{Prop.Preserving_alg_groups3}: Let $V \subseteq \Aut(Y)$ be a non-trivial
	generalized $\theta(H)$-root subgroup of dimension $d \geq 1$. 
	By Proposition~\ref{Lem.Image_rootsubgrp} applied to the group
	isomorphism $\theta^{-1} \colon \Aut(Y) \to \Aut(X)$ and the connected algebraic subgroup $\theta(H)$ of $\Aut(Y)$, 
	we get that
	$\theta^{-1}(V)$ is a non-trivial generalized $H$-root subgroup in $\Aut(X)$ 
	of dimension $d$. 
	
	On the other hand, let $U$ be a non-trivial generalized $H$-root subgroup of dimension $d$.
	Let $U_1, \ldots, U_d \subseteq U$ be $H$-root subgroups with
	$U = U_1 \cdots U_d$. As $U_i$
	is a non-trivial $H$-root subgroup of $\Aut(X)$, by~\ref{Prop.Preserving_alg_groups2}
	we know that $\theta(U)$ is a non-trivial $\theta(H)$-root subgroup of $\Aut(Y)$ contained in 
	$\theta(U)$. Since multiplication
	\[
		\theta(U_1) \times \cdots \times \theta(U_d) \to  \Aut(Y)
	\]
	gives an injective ind-group homomorphism of an algebraic group into an 
	ind-group with image equal to $\theta(U)$ we get that
	$\theta(U)$ is a non-trivial generalized $\theta(H)$-root subgroup of $\Aut(Y)$ of dimension $d$.
	This gives~\ref{Prop.Preserving_alg_groups3}.
	
	\ref{Prop.Preserving_alg_groups4}:
	Let $V$ be a trivial $\theta(H)$-root subgroup of $\Aut(Y)$.
	Then $\theta^{-1}(V)$ contains non-trivial unipotent elements that commute with $H$
	by Corollary~\ref{Cor.UnipotentToUnipotent}. The closure of the subgroup generated by 
	such a unipotent element in
	$\Aut(X)$ is a trivial $H$-root subgroup  and thus we arrive 
	at a contradiction.
\end{proof}

\begin{proposition}
	\label{Prop.Spherial}
	Let $\kk$ be uncountable.
	Let $X$ be a $G$-spherical affine variety for a connected reductive algebraic subgroup $G \subseteq \Aut(X)$,
	let $B \subseteq G$ be a Borel subgroup and let 
	$\theta \colon \Aut(X) \to \Aut(Y)$ be a group isomorphism (where $Y$ is irreducible and affine).
	If $X$ is not isomorphic to an algebraic torus, then $\theta(B)$ acts with a dense open orbit on $Y$.
\end{proposition}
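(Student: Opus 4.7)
The plan is to argue by contradiction. By Theorem~\ref{Prop.Preserving_alg_groups}\ref{Prop.Preserving_alg_groups1}, $\theta(G)$ is a connected reductive algebraic subgroup of $\Aut(Y)$ and $\theta|_B \circ \tau_B \colon B^\tau \to \theta(B)$ is an isomorphism of algebraic groups for some field automorphism $\tau$ of $\kk$; in particular $\theta(B)$ is a closed connected solvable algebraic subgroup of $\Aut(Y)$ of the same dimension as $B$, with maximal torus $\theta(T)$ (for any maximal torus $T \subseteq B$) and unipotent radical $\theta(R_u(B))$. Furthermore, Lemma~\ref{Lem.No_B-root subgroups} applied to the $G$-spherical variety $X$ combined with Theorem~\ref{Prop.Preserving_alg_groups}\ref{Prop.Preserving_alg_groups4} shows that every $\theta(B)$-root subgroup of $\Aut(Y)$ is non-trivial. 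The strategy is to manufacture a trivial $\theta(B)$-root subgroup of $\Aut(Y)$ under the assumption that $\theta(B)$ has no dense open orbit on $Y$, yielding the desired contradiction.

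Assume $\theta(B)$ has no dense open orbit on $Y$. Since $\theta(B)$ is connected solvable acting on the irreducible affine variety $Y$, Rosenlicht's theorem gives $\kk(Y)^{\theta(B)} \neq \kk$, and a standard Lie--Kolchin denominator-clearing argument then yields a non-constant regular $\theta(B)$-semi-invariant $f \in \kk[Y]$ of some weight $\mu \colon \theta(T) \to \GG_m$. Since $\theta(R_u(B))$ is unipotent, $f$ is automatically $\theta(R_u(B))$-invariant. Given a $\theta(B)$-root subgroup $U \subseteq R_u(\theta(B))$ of weight $\chi$ (obtained via Theorem~\ref{Prop.Preserving_alg_groups}\ref{Prop.Preserving_alg_groups2} from a $B$-root subgroup of $R_u(B)$) with associated locally nilpotent derivation $D_U$ on $\kk[Y]$, and any regular $\theta(B)$-semi-invariant $h \in \kk[Y]^U$ of weight $-\chi$, a direct computation shows that the $\GG_a$-subgroup of $\Aut(Y)$ associated with the locally nilpotent derivation $h \cdot D_U$ is a trivial $\theta(B)$-root subgroup, the total $\theta(T)$-weight becoming $-\chi + \chi = 0$.

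The crux of the proof is therefore the weight-matching step: producing a regular $\theta(B)$-semi-invariant of weight exactly $-\chi$ for some root $\chi$ of $\theta(T)$ on $\Lie R_u(\theta(B))$, starting from the semi-invariant $f$ of weight $\mu$ obtained above. I plan to accomplish this by iterating locally nilpotent derivations from distinct $\theta(B)$-root subgroups (which shift weights additively by roots) applied to $f$, and by forming products of the resulting semi-invariants, thereby generating a rich sub-monoid of achievable weights in $\mathfrak{X}(\theta(T))$ that eventually realizes some $-\chi$. I expect this to be the main technical obstacle, as it requires carefully controlling the semi-invariant weight monoid relative to the root system and exploiting that sufficiently many $\theta(B)$-root subgroups are available by Proposition~\ref{Prop.Existence_of_root_subgroups}. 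In the residual case where $G$ is itself a torus, the argument adapts via Lemma~\ref{Lem.Characterize_Torus} and the explicit description of $T$-root weights in Sect.~\ref{sec.Notations}, which transfer to $Y$ through $\theta$ and provide the necessary semi-invariants directly.
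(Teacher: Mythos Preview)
Your proposal has a genuine gap at exactly the step you yourself flag as the ``main technical obstacle'': producing a regular $\theta(B)$-semi-invariant of weight $-\chi$ for some root $\chi$. The plan---iterate locally nilpotent derivations from root subgroups and take products---comes with no mechanism guaranteeing that $-\chi$ lies in the submonoid of weights you can reach. Applying a root-subgroup derivation $D_{U_\alpha}$ to a semi-invariant of weight $\mu$ yields weight $\mu+\alpha$ (a \emph{positive} root added), and products only push you further into the dominant direction; nothing in your outline forces $-\chi$ to appear. At this point of the argument you know essentially nothing about $Y$, so there is no structural input to exploit. This is not a detail to be filled in: it is the whole content of the proof. (A secondary issue: for $h\cdot D_U$ to be normalized by all of $\theta(B)$ you also need $R_u(\theta(B))$ to centralize $D_U$, so $U$ must be chosen central in $R_u(\theta(B))$; you do not say this.)

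The paper bypasses the weight-matching problem entirely by invoking Proposition~\ref{Prop.Characterization_Spherical}: a connected solvable algebraic subgroup $H\subseteq\Aut(Z)$ containing non-trivial unipotents has a dense open orbit on $Z$ if and only if the dimensions of all generalized $H$-root subgroups of $\Aut(Z)$ are uniformly bounded. This is a criterion phrased purely in terms of generalized root subgroups, and Theorem~\ref{Prop.Preserving_alg_groups}\ref{Prop.Preserving_alg_groups3},\ref{Prop.Preserving_alg_groups4} gives a dimension-preserving bijection between non-trivial generalized $B$-root subgroups of $\Aut(X)$ and those of $\Aut(Y)$, together with the absence of trivial ones on the $Y$-side (via Lemma~\ref{Lem.No_B-root subgroups}). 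Thus the bound, which holds on $X$ because $B$ has a dense orbit, transfers verbatim to $Y$.

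When $G$ is a torus, $R_u(B)$ is trivial and Proposition~\ref{Prop.Characterization_Spherical} does not apply to $B$ directly (and your main construction does not even start, since there is no $U\subseteq R_u(\theta(B))$). The paper replaces $B$ by the commutative group $H=DU$, where $U$ is a $T$-root subgroup and $D\subseteq T$ is the kernel of its weight; $H$ has a dense orbit on $X$ (Lemma~\ref{Lem.Dense_open_DU-orbit}), contains unipotents, and $\Cent_{\Aut(X)}(H)=H$ pins down the unique trivial $H$-root subgroup on both sides. Running Proposition~\ref{Prop.Characterization_Spherical} for $H$ gives a dense $\theta(H)$-orbit on $Y$, hence $\dim Y=\dim X$, and then the torus $\theta(T)=\theta(B)$ of that dimension has a dense orbit. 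Your sketch for this case is too vague to constitute an argument.
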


We know from Theorem~\ref{Prop.Preserving_alg_groups}\ref{Prop.Preserving_alg_groups1} that $\theta(G)$
is a connected reductive algebraic subgroup of $\Aut(Y)$ of the same dimension than $G$ and $\theta(B)$
is a Borel subgroup of $\theta(G)$.
For the proof of Proposition~\ref{Prop.Spherial} we need the following result:

\begin{proposition}[{see~\cite[Proposition~7.3]{ReSa2021Characterizing-smo}}]
	\label{Prop.Characterization_Spherical}
	Let $X$ be an irreducible affine variety and let $H$ be a connected solvable algebraic 
	subgroup of $\Aut(X)$ that contains non-trivial unipotent elements. Then the following
	statements are equivalent:
	\begin{enumerate}[wide=0pt, leftmargin=*]
		\item $H$ acts with a dense open orbit on $X$;
		\item There exists a constant $C$ such that the dimension of every generalized 
		$H$-root subgroup of $\Aut(X)$ is bounded by $C$. \qed
	\end{enumerate}
\end{proposition}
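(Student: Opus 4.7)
The plan is to prove the equivalence by treating the two implications separately; the hypothesis that $H$ contains non-trivial unipotent elements is only needed for (2)$\Rightarrow$(1). For (1)$\Rightarrow$(2), I would show that every generalized $H$-root subgroup $V\subseteq\Aut(X)$ satisfies $\dim V\leq\dim X$, so $C=\dim X$ works. Fix $x_0$ in the open $H$-orbit $W$ and let $\varepsilon\colon\GG_a^{\dim V}\xrightarrow{\sim}V$ be the isomorphism associated to the weight $\chi\colon H\to\GG_m$, so that closed subgroups of $V$ correspond to $\kk$-linear subspaces of $\GG_a^{\dim V}$. For each $x\in W$ the stabilizer $V_x\subseteq V$ corresponds to such a subspace, and a direct computation gives $V_{hx}=hV_xh^{-1}$ for $h\in H$; under $\varepsilon$ this is multiplication by the scalar $\chi(h)\in\GG_m$, which preserves any linear subspace. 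Since $H$ acts transitively on $W$, the subspace $V_x$ is thus the same for every $x\in W$, call it $V^{\mathrm{gen}}$. By construction every element of $V^{\mathrm{gen}}$ fixes every point of $W$, hence all of $X$ by density (the fixed point set is closed), and faithfulness of $V\subseteq\Aut(X)$ forces $V^{\mathrm{gen}}=\{\id_X\}$. Consequently $V$ acts freely on $W$ and $\dim V=\dim V\cdot x_0\leq\dim X$.

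For the converse (2)$\Rightarrow$(1) I argue by contrapositive and produce generalized $H$-root subgroups of unbounded dimension. Let $T\subseteq H$ be a maximal torus and $U=R_u(H)$, which is non-trivial by hypothesis. Choose a $T$-root subgroup $V_0\subseteq Z(U)$ of some $T$-weight $\lambda$, fix $v_0\in V_0\setminus\{\id_X\}$, and let $\rho$ be the associated $\GG_a$-action on $X$. The key computation is that for any $U$-invariant $T$-semi-invariant $f\in\kk[X]^U$ of $T$-weight $\mu$, the replica $f\cdot v_0$ lies in $\Aut(X)$ and satisfies
\[
t(f\cdot v_0)t^{-1}=(\lambda\mu^{-1})(t)(f\cdot v_0)\quad\text{and}\quad u(f\cdot v_0)u^{-1}=f\cdot v_0
\]
for all $t\in T$, $u\in U$; the first identity follows from $T$-equivariance of $\rho$ and semi-invariance of $f$, while the second uses that $V_0\subseteq Z(U)$ (so $u$ commutes with $\rho(s,\cdot)$) together with the $U$-invariance of $f$. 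Hence for any $T$-isotypic subspace $M\subseteq\kk[X]^U$ of a single weight $\mu$, the map $f\mapsto f\cdot v_0$ is an injective ind-group homomorphism $M\to\Aut(X)$ whose image is a generalized $H$-root subgroup of weight $\lambda-\mu$ and dimension $\dim M$; injectivity follows from the fact that a non-trivial $\GG_a$-action on an irreducible affine variety is generically free.

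It remains to exhibit such subspaces $M$ of arbitrarily large dimension. Since $H$ does not have a dense open orbit, Rosenlicht's theorem gives $\mathrm{tr.deg.}_\kk\kk(X)^H\geq 1$; as $\kk$ is algebraically closed, any non-constant element of $\kk(X)^H$ is transcendental over $\kk$. Applying Lie--Kolchin to a suitable $H$-stable finite-dimensional subspace of the ideal of denominators of such an invariant yields two $H$-semi-invariants $g,h\in\kk[X]$ of a common weight $\nu$ with $g/h$ transcendental over $\kk$; since characters of $H$ restrict trivially to the unipotent group $U$, both $g$ and $h$ lie in $\kk[X]^U$ and are $T$-semi-invariants of weight $\nu|_T$. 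For each $n\geq 1$ the $n+1$ elements $g^{i}h^{n-i}$, $0\leq i\leq n$, are $U$-invariant $T$-semi-invariants of weight $n\nu|_T$, linearly independent by transcendence, so their $\kk$-span is a valid $M$ of dimension $n+1$; the preceding paragraph then produces a generalized $H$-root subgroup of dimension $n+1$, contradicting~(2). The main technical obstacle is the weight bookkeeping in the replica computation, together with the existence of a transcendental quotient of semi-invariants of a common weight; once these are in hand, the rest is essentially linear algebra.
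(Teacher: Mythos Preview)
The paper does not prove this proposition; it is quoted from \cite[Proposition~7.3]{ReSa2021Characterizing-smo} with a \qed and no argument given here, so there is no proof in the present paper to compare yours against.

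Your proposal is correct and is essentially the natural argument. For (1)$\Rightarrow$(2), the constancy of the stabilizer $V_x$ along the open $H$-orbit, obtained from $V_{hx}=hV_xh^{-1}=\varepsilon(\chi(h)\cdot\varepsilon^{-1}(V_x))=V_x$, forces $V$ to act freely on $W$ and hence $\dim V\le\dim X$. For (2)$\Rightarrow$(1), the replica construction $f\mapsto f\cdot v_0$ on a $T$-isotypic subspace of $\kk[X]^U$ is exactly the right mechanism, and your Lie--Kolchin argument applied to the $H$-stable denominator ideal of a non-constant element of $\kk(X)^H$ is a clean way to produce two $H$-semi-invariants $g,h$ of a common weight with $g/h$ transcendental. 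Two minor remarks: the sign in the weight computation (whether the $T$-weight of $f\cdot v_0$ is $\lambda\mu^{-1}$ or $\lambda\mu$) depends on the convention for semi-invariants and is immaterial to the conclusion; and it is worth saying explicitly that the image of the injective homomorphism $M\to\Aut(X)$ is a closed algebraic subgroup (hence a generalized $H$-root subgroup in the paper's sense), which follows from standard ind-group theory since $M$ is an algebraic group.
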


\begin{proof}[Proof of Proposition~\ref{Prop.Spherial}]
	We distinguish two cases:
	
	\textbf{$G$ is not a torus}: In this case $B$ contains non-trivial unipotent elements
	and hence $\theta(B)$ as well by Corollary~\ref{Cor.UnipotentToUnipotent}.
	Thus we may apply Proposition~\ref{Prop.Characterization_Spherical}. 
	Note that $\Aut(X)$ contains no trivial $B$-root subgroup by Lemma~\ref{Lem.No_B-root subgroups}.
	The statement follows now from
	Theorem~\ref{Prop.Preserving_alg_groups}\ref{Prop.Preserving_alg_groups3}, \ref{Prop.Preserving_alg_groups4}.
	
	\textbf{$G$ is a torus}: Let $n = \dim G$. Since $X$ is not isomorphic to an algebraic torus,
	there exists a $G$-root subgroup $U$ in $\Aut(X)$, see 
	Lemma~\ref{Lem.Characterize_Torus}. Let $D \subseteq G$ be the subgroup
	of those elements in $G$ that commute with $U$. Then $H \coloneqq DU$ is a
	connected commutative algebraic subgroup of $\Aut(X)$ that acts with a dense open orbit on $X$
	by Lemma~\ref{Lem.Dense_open_DU-orbit}. Since $U$ is a non-trivial $G$-root subgroup
	and since $D \subseteq G$, it follows that $\theta(D)$ and $\theta(U)$ are closed algebraic subgroups
	of $\Aut(Y)$, isomorphic to $D$ and $U$, respectively 
	(see Theorem~\ref{Prop.Preserving_alg_groups}). Hence, $\theta(H) = \theta(D) \theta(U)$
	is an algebraic subgroup of $\Aut(Y)$ that is isomorphic to $DU$. By Proposition~\ref{Lem.Centralizer_open_dense_orbit}\eqref{Lem.Centralizer_open_dense_orbit1}
	it follows $\Cent_{\Aut(X)}(H) = H = DU$ and therefore $\Cent_{\Aut(Y)}(\theta(H)) = \theta(D) \theta(U)$.
	This implies that $U$ is the only trivial $H$-root subgroup in
	$\Aut(X)$ and $\theta(U)$ is the only trivial $\theta(H)$-root subgroup of $\Aut(Y)$.
	Hence, $U$ and $\theta(U)$ are also the only trivial generalized root subgroups of $\Aut(X)$ and $\Aut(Y)$
	with respect to $H$ and $\theta(H)$, respectively.
	Now, Proposition~\ref{Prop.Characterization_Spherical}
	in combination with Theorem~\ref{Prop.Preserving_alg_groups}\ref{Prop.Preserving_alg_groups3}
	applied to $H$ implies that $\theta(H)$ acts with a dense open orbit on $Y$ and thus
	$\dim Y = \dim \theta(H) = \dim H = \dim X = n$. The torus $\theta(B) = \theta(G)$ acts thus with a dense open orbit on $Y$.
\end{proof}

\section{Characterizing affine spherical varieties diffferent from a torus}
\label{Sec.CharSpherical_different_from_torus}

This section is devoted to the proof of 
Theorem~\ref{mainthm.A}. In fact, we will prove a
more general result. For its formulation we need the following notation:
If $X$ is an affine varitey endowed with a faithful action of an algebraic group $H$,
we denote
\[
D_H(X) = \set{ \lambda \in \frak{X}(H) }{ 
	\textrm{there exists an $H$-root subgroup of $\Aut(X)$ of weight $\lambda$}} \, .
\]
Note that in case $X$ is $G$-spherical and $B$ is a Borel subgroup of $G$, then 
the zero weight is not contained in $D_B(X)$ by Lemma~\ref{Lem.No_B-root subgroups}.

\begin{theorem}\label{thm.Ageneralization}
	Assume that $\kk$ is uncountable, let $X$, $Y$ be irreducible affine varieties, let
	$\theta \colon \Aut(X) \to \Aut(Y)$ be a group isomorphism
	and let $G \subseteq \Aut(X)$ be a connected reductive 
	algebraic subgroup. 
	Moreover, we fix a Borel subgroup $B \subseteq G$. 
	If $X$ is $G$-spherical and not isomorphic to an algebraic torus, then 
	there exists a field automorphism $\tau$ of $\kk$ such that:
	\begin{enumerate}[wide=0pt, leftmargin=*]
		\item \label{main1gen} 
		The image $\theta(G)$ is an algebraic subgroup in $\Aut(Y)$ and
		\[
		G^\tau \xrightarrow{\tau_G} G \xrightarrow{\theta |_G} \theta(G)
		\]
		is an isomorphism of algebraic groups.
		\item \label{main2gen}  The algebraic subgroup $\theta(B)$ of $\Aut(Y)$ acts 
		with a dense open orbit on $Y$.
		\item \label{main2.5gen}
		The isomorphism
		\[
		\vartheta \colon \frak{X}(\theta(B)) \to \frak{X}(B^\tau) \, , \quad 
		\lambda \mapsto \lambda \circ \theta |_B \circ \tau_B
		\]
		sends the set $D_{\theta(B)}(Y)$ onto the set $D_{B^\tau}(X^\tau)$.
		\item \label{main3gen}
		If $Y$ is normal, then $\vartheta \colon \frak{X}(\theta(B)) \to \frak{X}(B^\tau)$ from above
		sends the weight monoid $\Lambda^+_{\theta(B)}(Y)$ onto the weight
		monoid $\Lambda^+_{B^\tau}(X^\tau)$.
		\item \label{main4gen} If $X$ and $Y$ are smooth, 
		then there exists an isomorphism $\varphi \colon X^\tau \to Y$ such that
		we have the following commutative diagram
		\[
		\xymatrix@R=10pt{
			G^\tau \times X^\tau \ar[d] \ar[rr]^-{(\theta |_G \circ \tau_G) \times \varphi} 
			&& \theta(G)  \times Y \ar[d] \\
			X^\tau \ar[rr]^-{\varphi} && Y
		}
		\]
		where the vertical arrows are the action morphisms.
	\end{enumerate}
\end{theorem}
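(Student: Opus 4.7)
The first step is to invoke Theorem~\ref{Prop.Preserving_alg_groups}\ref{Prop.Preserving_alg_groups1} in order to produce the field automorphism $\tau$ together with the fact that $\theta(G)$ is an algebraic subgroup of $\Aut(Y)$ and $\theta|_G \circ \tau_G \colon G^\tau \to \theta(G)$ is an isomorphism of algebraic groups; this settles~\eqref{main1gen}. Since $X$ is not isomorphic to an algebraic torus, Proposition~\ref{Prop.Spherial} immediately yields~\eqref{main2gen}, and together with~\eqref{main1gen} it shows that $Y$ is $\theta(G)$-spherical whenever $Y$ is normal.

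For~\eqref{main2.5gen} the plan is as follows. Lemma~\ref{Lem.No_B-root subgroups} applied to the $G^\tau$-spherical variety $X^\tau$ (with Borel $B^\tau$) ensures that every $B^\tau$-root subgroup of $\Aut(X^\tau)$ has non-trivial $B^\tau$-weight; by Theorem~\ref{Prop.Preserving_alg_groups}\ref{Prop.Preserving_alg_groups4} with $H = B$, the analogous statement holds for $\theta(B)$-root subgroups of $\Aut(Y)$. Applying parts~\ref{Prop.Preserving_alg_groups2} and~\ref{Prop.Preserving_alg_groups3} of the same theorem with $H = B$, I obtain a bijection between $B^\tau$-root subgroups of $\Aut(X^\tau)$ and $\theta(B)$-root subgroups of $\Aut(Y)$, where the transport of weights is precisely given by the formula in~\ref{Prop.Preserving_alg_groups2}: the $B^\tau$-weight of $U^\tau$ equals $\vartheta$ applied to the $\theta(B)$-weight of $\theta(U)$. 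This shows $\vartheta(D_{\theta(B)}(Y)) = D_{B^\tau}(X^\tau)$, proving~\eqref{main2.5gen}.

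The main obstacle is~\eqref{main3gen}. Under the normality assumption on $Y$, both $X^\tau$ and $Y$ are affine spherical varieties for $G^\tau$ and $\theta(G)$ respectively, with compatible Borel subgroups. The plan is to translate the equality of root datasets from~\eqref{main2.5gen} into an equality of weight monoids using a dictionary between $B$-root subgroups and generators of $\Lambda^+_B(\cdot)$ for affine spherical varieties, in the style of~\cite{ReSa2021Characterizing-smo}. Concretely, I would combine the classification of horizontal and vertical $B$-root subgroups from~\cite{ArAv2022Root-subgroups-on-} with the transport of the centralizer torus $\Cent_{\Aut(X)}(G)^\circ$ through $\theta$ guaranteed by Proposition~\ref{Prop.Centralizer_of_G_in_spherical}: these data jointly determine the primitive generators of the extremal rays of the cone $\Cone(\Lambda^+_B(X))$, hence (via saturation, using finitely many additional generators coming from the colors) the full weight monoid. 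Checking that every ingredient in this recovery procedure is compatible with $\theta$ and the transport $\vartheta$ is the technical core of the proof and yields $\vartheta(\Lambda^+_{\theta(B)}(Y)) = \Lambda^+_{B^\tau}(X^\tau)$.

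Finally, for~\eqref{main4gen}, assume additionally that $X$ and $Y$ are smooth. Endow $Y$ with the $G^\tau$-action transported via the isomorphism $\theta|_G \circ \tau_G$ from~\eqref{main1gen}; then $X^\tau$ and $Y$ are both smooth affine $G^\tau$-spherical varieties sharing the Borel $B^\tau$ and, by~\eqref{main3gen}, the same weight monoid inside $\mathfrak{X}(B^\tau)$. Losev's uniqueness theorem for smooth affine spherical varieties (smooth affine spherical $G$-varieties with the same weight monoid are $G$-equivariantly isomorphic) then provides a $G^\tau$-equivariant isomorphism $\varphi \colon X^\tau \to Y$, yielding the commutative diagram in~\eqref{main4gen}.
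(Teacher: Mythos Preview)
Your proposal is correct and follows essentially the same route as the paper: parts~\eqref{main1gen}, \eqref{main2gen}, \eqref{main2.5gen}, and \eqref{main4gen} are handled exactly as you describe (via Theorem~\ref{Prop.Preserving_alg_groups}, Proposition~\ref{Prop.Spherial}, and Losev's theorem \cite[Theorem~1.3]{Lo2009Proof-of-the-Knop-}). For~\eqref{main3gen} the paper simply invokes \cite[Corollary~8.4]{ReSa2021Characterizing-smo} as a black box, whereas you sketch its content; your outline is in the right spirit, but there is no need to reprove it here.
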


\begin{proof}[Proof of Theorem~\ref{thm.Ageneralization}]
	Let $\tau$ be the field automorphism of $\kk$ from Theorem~\ref{Prop.Preserving_alg_groups}.
	Then~\eqref{main1gen} and~\eqref{main2gen} follow from Theorem~\ref{Prop.Preserving_alg_groups}\ref{Prop.Preserving_alg_groups1} 
	and~Proposition~\ref{Prop.Spherial}, respectively.
	
	Next we note that $\theta |_B \circ \tau_B \colon B^\tau \to \theta(B)$
	is an isomorphism of algebraic groups. Hence
	\[
		\vartheta \colon
		\frak{X}(\theta(B)) \to \frak{X}(B^\tau) \, , \quad 
		\lambda \mapsto \lambda \circ \theta |_B \circ \tau_B
	\]
	is an isomorphism of groups. By Theorem~\ref{Prop.Preserving_alg_groups}\ref{Prop.Preserving_alg_groups2},
    \ref{Prop.Preserving_alg_groups3}, \ref{Prop.Preserving_alg_groups4} the isomorphism $\vartheta$ restricts to a bijection
    $D_{\theta(B)}(Y) \to D_{B^\tau}(X^\tau)$. Now~\eqref{main2.5gen} follows 
    and~\eqref{main3gen} follows from
    \cite[Corollary~8.4]{ReSa2021Characterizing-smo}.
    The last statement~\eqref{main4gen} follows from Losev's Theorem~\cite[Theorem~1.3]{Lo2009Proof-of-the-Knop-}.
\end{proof}

Now we show, how Theorem~\ref{thm.Ageneralization} implies Theorem~\ref{mainthm.A}:

\begin{proof}[Proof of Theorem~\ref{mainthm.A}]
	Denote by  $\tau$ the field automorphism of $\kk$ from Theorem~\ref{thm.Ageneralization}\eqref{main1gen}.
	Since $X$, $G$ and the action of $G$ on $X$ are defined over $\QQ$, it follows that 
	there exists an isomorphism of $\kk$-varieties $\varphi \colon X \to X^\tau$ and an isomorphism
	of algebraic groups $\vartheta \colon G \to G^\tau$ such that
	\[
		\varphi(g x) = \vartheta(g) \varphi(x) \quad \textrm{for all $g \in G$ and all $x \in X$}
	\]
	by Remark~\ref{Rem.Xtau_X_def_over_Q}. Hence, we may identify $X^\tau$, $G^\tau$
	and the action of $G^\tau$ on $X^\tau$ with $X$, $G$ and the action of $G$ on $X$, respectively.
	Moreover, if $B$ is a Borel subgroup of $G$ that is defined over $\QQ$, then $\tau_G \colon G \to G$
	maps $B$ onto itself.
	
	With these preliminary considerations, Theorem~\ref{mainthm.A} follows from
	Theorem~\ref{thm.Ageneralization}.
\end{proof}

\begin{proof}[Proof of Corollary~\ref{maincor.B}]
	Note that every toric variety is defined over $\QQ$ (up to an isomorphism) and that an affine toric variety
	is determined by its weight monoid. Hence the statement follows from Theorem~\ref{mainthm.A}.
\end{proof}

\begin{remark}
	\label{Rem.Both toric}
	In case $X$ and $Y$ are both affine toric varieties and $\Aut(X)$, $\Aut(Y)$ are isomorphic as groups,
	then $X$ and $Y$ are isomorphic as toric varieties. 
	
	Indeed, if $X$ is non-isomorphic to an algebraic torus, then
	$X$, $Y$ are isomorphic as varieties by Corollary~\ref{maincor.B}. If $X$ is isomorphic to an algebraic torus, then
	$\Aut(X)$ has no non-trivial unipotent element and hence
	$\Aut(Y)$ has no non-trivial unipotent element by Corollary~\ref{Cor.UnipotentToUnipotent}. 
	By Lemma~\ref{Lem.Characterize_Torus},
	$Y$ is isomorphic to an algebraic torus. Since 
	in this case
	the divisible elements in $\Aut(X)$ and $\Aut(Y)$ are exactly the maximal tori, we conclude again that $X$ and $Y$ 
	are isomorphic as varieties. Now the claim follows from Demushkin’s theorem \cite[Theorem~2]{De1982Combinatorial-inva}.
\end{remark}

\section{Characterizing a torus}
\label{sec.X_is_the_torus}

This section is devoted to the study of the case when $X$ is an algebraic torus and the proof
of Theorem~\ref{mainthm.C}. The following consequence of \cite[Theorem~A]{CaXi2018Algebraic-actions-} is 
the major ingredient:

\begin{theorem}
	\label{thm.CX}
	Let $\Gamma$ be a subgroup of finite index of $\SL_n(\ZZ)$ and 
	let $Z$ be an irreducible quasi-affine variety such that
	$\Gamma$ embeds into $\Aut(Z)$.
	Then $\dim Z \geq n$.	
\end{theorem}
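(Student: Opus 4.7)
The plan is to translate the embedding $\Gamma \hookrightarrow \Aut(Z)$ into a group homomorphism into the birational group of a projective completion of $Z$, and then invoke \cite[Theorem~A]{CaXi2018Algebraic-actions-} as a black box.

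First I would dispose of the trivial case: for $n=1$ there is nothing to prove, and for $n\geq 2$ the group $\SL_n(\ZZ)$ is infinite, so $\Gamma$, having finite index inside it, is also infinite. Next, since $Z$ is quasi-affine it is in particular quasi-projective, so I can fix a projective compactification $\bar Z \supseteq Z$ with $\dim \bar Z = \dim Z$. Every automorphism of $Z$ extends uniquely to a birational self-map of $\bar Z$, giving an injective group homomorphism $\Aut(Z) \hookrightarrow \Bir(\bar Z)$; composing it with the hypothesis produces an injection
\[
\iota \colon \Gamma \hookrightarrow \Bir(\bar Z) \, .
\]

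Finally I would apply the Cantat--Xie theorem: any group homomorphism from a finite-index subgroup of $\SL_n(\ZZ)$ into $\Bir(W)$, where $W$ is a complex projective variety of dimension strictly less than $n$, has finite image. If one had $\dim Z < n$, then taking $W = \bar Z$ would force $\iota(\Gamma)$ to be finite, contradicting the injectivity of $\iota$ together with the infiniteness of $\Gamma$. Therefore $\dim Z \geq n$, as claimed.

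The only substantive input is the cited \cite[Theorem~A]{CaXi2018Algebraic-actions-}, which provides the dimensional obstruction via $p$-adic rigidity; the remainder is a formal reduction from the quasi-affine to the projective-birational setting, and I do not anticipate any serious obstacle there beyond making sure that the hypotheses of \cite[Theorem~A]{CaXi2018Algebraic-actions-} match the setting (which they do, since $\Bir(\bar Z)$ is the target and $\dim \bar Z = \dim Z$).
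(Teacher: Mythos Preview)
Your reduction from $\Aut(Z)$ to $\Bir(\bar Z)$ is fine, but you have overlooked the main point of the argument: the paper works over an arbitrary algebraically closed field $\kk$ of characteristic zero, whereas \cite[Theorem~A]{CaXi2018Algebraic-actions-} is stated for complex varieties. When you write that ``the hypotheses of \cite[Theorem~A]{CaXi2018Algebraic-actions-} match the setting'', this is exactly where they do not match: your $\bar Z$ is a $\kk$-variety, not a complex one, so the theorem does not apply directly.

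The paper's proof is devoted entirely to bridging this gap. One uses that $\Gamma$ is finitely generated (Schreier's lemma, since it has finite index in the finitely generated group $\SL_n(\ZZ)$), so both $Z$ and the $\Gamma$-action descend to a variety $Z_0$ defined over the algebraic closure of a subfield $\kk_0 \subseteq \kk$ finitely generated over $\QQ$. Such a field embeds into $\CC$, and one then applies Cantat--Xie to the base change $Z_0 \times_{\overline{\kk_0}} \CC$; the dimension is preserved under these base changes, giving the conclusion. Your compactification step is harmless (and arguably makes the invocation of \cite[Theorem~A]{CaXi2018Algebraic-actions-} more literal), but without the descent-to-$\CC$ argument the proof is incomplete.
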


\begin{proof}[Proof of Theorem~\ref{thm.CX}]
	As \cite[Theorem~A]{CaXi2018Algebraic-actions-} is formulated over the complex numbers
	$\CC$, we have to reduce to that case.
	Since $\SL_n(\ZZ)$ is finitely generated, $\Gamma$ is finitely generated 
	as well by Schreier‘s lemma; see e.g.~\cite[Lemma 4.2.1]{Se2003Permutation-group-}. 
	Hence, we may find a subfield $\kk_0$ of $\kk$ that is finitely generated
	over $\QQ$ such that there is an irreducible 
	quasi-affine variety $Z_0$, defined over the algebraic closure $\overline{\kk_0}$ with the following properties:
	\begin{itemize}
		\item $Z = Z_0 \times_{\overline{\kk_0}} \kk$ (note that $\overline{\kk_0}$ embeds into $\kk$, as $\kk$ is algebraically closed);
		\item $\Gamma$ embeds into $\Aut_{\overline{\kk_0}}(Z_0)$. 
	\end{itemize}
	As $\kk_0$ is finitely generated over $\QQ$, it embeds into $\CC$
	and therefore $\overline{\kk_0}$ embeds into $\CC$ as well.
	By \cite[Theorem~A]{CaXi2018Algebraic-actions-} we get thus
	$\dim_{\CC} Z_0 \times_{\overline{\kk_0}} \CC \geq n$. Since
	$\dim_{\kk} Z = \dim_{\overline{\kk_0}} Z_0 = \dim_{\CC} Z_0 \times_{\overline{\kk_0}} \CC$, 
	the statement follows.
\end{proof}

\begin{proof}[Proof of Theorem~\ref{mainthm.C}]
	Let $X = T$ be a torus of dimension $n$.
	We may assume $n \geq 1$.
	For every prime $p$, denote by $\Gamma_p$ the kernel of the natural homomorphism
	$\SL_n(\ZZ) \to \SL_n(\ZZ / p \ZZ)$. Note that $\Gamma_p$ has finite index in 
	$\SL_n(\ZZ)$ for all primes $p$.
	Since $\bigcap_{i} \Gamma_{p_i}$ is the trivial group for all infinite sequences of primes
	$p_1, p_2, \ldots$, for every finite subgroup 
	$F \subseteq \SL_n(\ZZ)$ there exists a prime  $p$ with
	\begin{equation}
		\label{Eq.Gamma_m_intersects_F_trivially}
		F \cap \Gamma_p = \{ I \} \, ,
	\end{equation}
	where $I$ denotes the identity matrix in $\SL_n(\ZZ)$.
	
	Let $\theta \colon \Aut(T) \to \Aut(Y)$ be a group isomorphism.
	As $T$ is its own centralizer in $\Aut(T)$ (see~Proposition~\ref{Lem.Centralizer_open_dense_orbit}\eqref{Lem.Centralizer_open_dense_orbit1}), it follows
	that $\theta(T)$ is closed in $\Aut(Y)$. Hence, $\theta(T)^\circ$
	is a closed subtorus in $\Aut(Y)$ of dimension $\leq n$ by Lemma~\ref{lem.torus_to_closed_set_implies_torus}.
	
	Consider the natural group isomorphism
	\[
	\begin{array}{rcl}
		\GL_n(\ZZ) &\to& \Aut_{\grp}(T) \, , \\
		(a_{ij})_{ij} &\mapsto& 
		\left( (t_1, \ldots, t_n) \mapsto
		\left( \prod_{i=1}^n t_i^{a_{1i}}, \ldots, \prod_{i=1}^n t_i^{a_{ni}} \right) \right) \, ,
	\end{array}
	\]
	where $\Aut_{\grp}(T)$ denotes the group of all algebraic group automorphisms of $T$. 
	Note that this homomorphism is induced by the action by conjugation of 
	$\Aut_{\grp}(T)$ on $T$ inside $\Aut(T)$.
	Using the isomorphism above, we may see $\SL_n(\ZZ)$ as a subgroup of $\Aut(T)$
	and this subgroup normalizes $T$. Hence, $\theta(\SL_n)$ normalizes $\theta(T)^\circ$.
	We distinguish three cases, depending on the dimension of $\theta(T)^\circ$:
	
	\begin{itemize}
		\item $\dim \theta(T)^\circ = n$: As $\dim Y \leq n$, $Y$ is an affine  $n$-dimensional toric variety. 
		By Theorem~\ref{Prop.Preserving_alg_groups}
		\ref{Prop.Preserving_alg_groups3},\ref{Prop.Preserving_alg_groups4} 
		applied to
		$\theta$ and by the characterization of the torus from Lemma~\ref{Lem.Characterize_Torus}, 
		$Y$ is a torus. Since $\dim T = n = \dim Y$ we get $Y \simeq T$ as varieties.
		
		\item $0 < \dim \theta(T)^\circ < n$: 
		As $\theta(\SL_n)$ normalizes $\theta(T)^\circ$, the $\theta(\SL_n(\ZZ))$-action
		by conjugation on $\theta(T)^\circ$ gives us a group homomorphism
		\[
		\eta \colon \theta(\SL_n(\ZZ)) \to \Aut_{\grp}(\theta(T)^\circ) = \GL_{\dim \theta(T)^\circ}(\ZZ) \, .
		\]
		The case $n = 1$ is impossible. If $n = 2$, then $\ker(\eta)$
		has finite index in $\theta(\SL_n(\ZZ))$ and if $n \geq 3$, by Margulis' normal 
		subgroup theorem~\cite[Theorem~8.1.2]{Zi1984Ergodic-theory-and}, 
		$\ker(\eta)$ is finite or has finite index in $\theta(\SL_n(\ZZ))$.
		We distinguish these two cases:
		\begin{enumerate}[label=\alph*)]
			\item $\ker(\eta)$ is finite: By~\eqref{Eq.Gamma_m_intersects_F_trivially},
			there exists a prime  $p$ such that $\ker(\eta)$ intersects $\theta(\Gamma_p)$ trivially.
			Hence, $\eta$ restricts to an injection $\theta(\Gamma_p) \to \Aut_{\grp}(\theta(T)^\circ)$.
			By Theorem~\ref{thm.CX}, we get $\dim \theta(T)^\circ \geq n$, contradiction.
			\item $\ker(\eta)$ has finite index in $\theta(\SL_n(\ZZ))$: 
			As $\theta^{-1}(\theta(T)^\circ)$ is uncountable (note that $\kk$ is uncountable), there exists 
			$t = (t_1, \ldots, t_n) \in \theta^{-1}(\theta(T)^\circ)$ and $i \in \{1, \ldots, n\}$ 
			such that $t_i \in \GG_m$ has infinite order.
			We may assume $i = n$.
			As $\theta^{-1}(\ker(\eta))$ has finite index in $\SL_n(\ZZ)$, there exists
			an integer $a \geq 1$ such that the elementary matrix 
			\[
			E_a \coloneqq 
			\begin{pmatrix}
				1 & \dots & a \\
				\vdots & \ddots &   \vdots  \\
				0 & \dots & 1
			\end{pmatrix}
			\in \SL_n(\ZZ)
			\]
			lies in $\theta^{-1}(\ker(\eta))$. As $t_n$ has infinite order,
			$E_a(t) = (t_1 t_n^a, t_2, \ldots, t_n) \neq t$.
			This contradicts the fact that $\theta^{-1}(\ker(\eta))$
			acts trivially on  $\theta^{-1}(\theta(T)^\circ)$ by conjugation.
		\end{enumerate}
		\item $\dim \theta(T)^\circ = 0$: Since $\theta(T)^\circ$ has countable index
		in $\theta(T)$, it follows that $\theta(T)$ is countable. However, since
		$\kk$ is uncountable, $T$ is uncountable as well (as $\dim T  \geq  1$) and thus
		we arrive at a contradiction. \qedhere
	\end{itemize}
\end{proof}

\section{Counterexamples in the projective case}
\label{sec.Counterexample_proj_case}

In this small section we give counterexamples to Theorem~\ref{mainthm.C} 
in the projective case. We assume that $\kk$ is uncountable.
The following easy remark will be useful:

\begin{remark}
	\label{rem.Negative_curves_are_T_invariant}
	Two distinct irreducible curves of negative self-intersection number in a smooth projective 
	surface cannot be linearly equivalent and thus there are at most countably many of them. As $\kk$ is uncountable,
	each irreducible curve of negative self-intersection number in a smooth projective toric surface 
	is invariant under the torus action.
\end{remark}

Let $X_0 \coloneqq \PP^2$ be endowed with the standard action of the torus $T = \GG_m^2$.
In this way, $X_0$ becomes a $T$-toric variety. The complement $C_0$ of the open $T$-orbit of $X_0$ consists of the three $T$-invariant lines in $\PP^2$. Let $\varphi_1 \colon X_1 \to X_0$ be the blow-up of the three $T$-fixed points, i.e.~the blow-up of the singular points of $C_0$. Then $X_1$ is again $T$-toric under the $T$-action that comes from $X_0$. The complement $C_1$ of the open $T$-orbit
in $X_1$ consists of six $-1$-curves, arranged in a hexagon and there are no other irreducible
curves of negative self-intersection number in $X_1$.

We will inductively construct $T$-toric smooth projective surfaces 
together with $T$-equivariant morphisms:
\[
\ldots \xrightarrow{\varphi_{i+2}} X_{i+1} \xrightarrow{\varphi_{i+1}}
X_i \xrightarrow{\varphi_{i-1}} \ldots
\xrightarrow{\varphi_{2}} X_1 \xrightarrow{\varphi_{1}} X_0 = \PP^2 \, .
\]
For $i \geq 1$, assume that $X_i$ is already constructed and 
that the complement $C_i$ of the open $T$-orbit in $X_i$ is connected and it is the union of all irreducible curves of negative self-intersection number. 
Moreover, the irreducible curves in $C_i$ are smooth.
Let $\varphi_{i+1} \colon X_{i+1} \to X_i$ be the blow-up of the singular points in $C_i$. Then $X_{i+1}$ is $T$-toric
under the $T$-action that comes from $\varphi_{i+1}$.
Moreover, the complement $C_{i+1}$ of the open $T$-orbit in $X_{i+1}$ is connected and it is the union of all 
irreducible curves of negative self-intersection number, by Remark~\ref{rem.Negative_curves_are_T_invariant}. 
Moreover, all irreducible curves in $C_{i+1}$ are smooth.
More precisely, the irreducible curves in $C_{i+1}$
that lie in the strict transform of $C_i$ under $\varphi_{i+1}$ have self-intersection number $<-1$ (as $C_i$ is connected), whereas the other irreducible curves in $C_{i+1}$ are the exceptional curves of $\varphi_{i+1}$. In particular, we get a group isomorphism
\[
\Aut(X_{i+1}) \to \Aut(X_i) \, , \quad \psi \mapsto \varphi_{i+1} \circ \psi \circ \varphi_{i+1}^{-1} \, .
\]

Hence, the smooth projective toric surfaces $X_i$, $i \geq 1$ are pairwise non-isomorphic (as the rank of the Picard groups of them are distinct), but the automorphism groups of them are all isomorphic to 
$\Aut(X_1) \simeq (\GG_m^2 \rtimes S_3) \rtimes (\ZZ / 2 \ZZ)$, where $S_3$ denotes the symmetric group of three elements,
see e.g.~\cite[\S3.3]{Be2007p-elementary-subgr}.

\begin{remark}
	In fact, ``almost all'' smooth projective toric varieties have an automorphism group, which is equal to 
	the maximal torus acting on it, see~e.g.~\cite[Corollary 4.7]{Co1995The-homogeneous-co}.
\end{remark}

\newcommand{\etalchar}[1]{$^{#1}$}
\providecommand{\bysame}{\leavevmode\hbox to3em{\hrulefill}\thinspace}
\providecommand{\MR}{\relax\ifhmode\unskip\space\fi MR }
% \MRhref is called by the amsart/book/proc definition of \MR.
\providecommand{\MRhref}[2]{%
	\href{http://www.ams.org/mathscinet-getitem?mr=#1}{#2}
}
\providecommand{\href}[2]{#2}


\begin{thebibliography}{AFK{\etalchar{+}}13}
	
	\bibitem[AA22]{ArAv2022Root-subgroups-on-}
	Ivan Arzhantsev and Roman Avdeev, \emph{Root subgroups on affine spherical
		varieties}, Selecta Math. (N.S.) \textbf{28} (2022), no.~3, Paper No. 60, 37.
	\MR{4414136}
	
	\bibitem[AFK{\etalchar{+}}13]{ArFlKa2013Flexible-varieties}
	I.~Arzhantsev, H.~Flenner, S.~Kaliman, F.~Kutzschebauch, and M.~Zaidenberg,
	\emph{Flexible varieties and automorphism groups}, Duke Math. J. \textbf{162}
	(2013), no.~4, 767--823.
	
	\bibitem[Bea07]{Be2007p-elementary-subgr}
	Arnaud Beauville, \emph{{$p$}-elementary subgroups of the {C}remona group}, J.
	Algebra \textbf{314} (2007), no.~2, 553--564. \MR{2344578}
	
	\bibitem[Bor91]{Bo1991Linear-algebraic-g}
	Armand Borel, \emph{Linear algebraic groups}, second ed., Graduate Texts in
	Mathematics, vol. 126, Springer-Verlag, New York, 1991.
	
	\bibitem[BP87]{BrPa1987Valuations-des-esp}
	Michel Brion and Franz Pauer, \emph{Valuations des espaces homog{\`e}nes
		sph{\'e}riques. ({Valuations} of spherical homogeneous spaces)}, Comment.
	Math. Helv. \textbf{62} (1987), 265--285 (French).
	
	\bibitem[Bri21]{Br2021Homogeneous-variet}
	Michel Brion, \emph{Homogeneous varieties under split solvable algebraic
		groups}, Indag. Math. (N.S.) \textbf{32} (2021), no.~5, 1139--1151.
	\MR{4310015}
	
	\bibitem[BT73]{BoTi1973Homomorphismes-abs}
	Armand Borel and Jacques Tits, \emph{Homomorphismes ``abstraits'' de groupes
		alg\'{e}briques simples}, Ann. of Math. (2) \textbf{97} (1973), 499--571.
	\MR{316587}
	
	\bibitem[Can14]{Ca2014Morphisms-between-}
	Serge Cantat, \emph{Morphisms between {C}remona groups, and characterization of
		rational varieties}, Compos. Math. \textbf{150} (2014), no.~7, 1107--1124.
	\MR{3230847}
	
	\bibitem[Cox95]{Co1995The-homogeneous-co}
	David~A. Cox, \emph{The homogeneous coordinate ring of a toric variety}, J.
	Algebr. Geom. \textbf{4} (1995), no.~1, 17--50 (English).
	
	\bibitem[CRX23]{CaReXi2023Families-of-commut}
	Serge Cantat, Andriy Regeta, and Junyi Xie, \emph{Families of commuting
		automorphisms, and a characterization of the affine space}, Amer. J. Math.
	\textbf{145} (2023), no.~2, 413--434. \MR{4570986}
	
	\bibitem[CX18]{CaXi2018Algebraic-actions-}
	Serge Cantat and Junyi Xie, \emph{Algebraic actions of discrete groups: the
		{$p$}-adic method}, Acta Math. \textbf{220} (2018), no.~2, 239--295.
	\MR{3849285}
	
	\bibitem[Dem82]{De1982Combinatorial-inva}
	A.~S. Demushkin, \emph{Combinatorial invariance of toric singularities.},
	Vestnik Moskov. Univ. Ser. I Mat. Mekh. (1982), no.~no. 2,, 80--87, 117.
	\MR{655409}
	
	\bibitem[DG11]{DeGr2011Schemas-en-groupes}
	Michel Demazure and Alexandre Grothendieck, \emph{Sch\'{e}mas en groupes ({SGA}
		3). {T}ome {III}. {S}tructure des sch\'{e}mas en groupes r\'{e}ductifs},
	vol.~8, Soci\'{e}t\'{e} Math\'{e}matique de France, Paris, 2011,
	S\'{e}minaire de G\'{e}om\'{e}trie Alg\'{e}brique du Bois Marie 1962--64.
	
	\bibitem[DL23]{DiLi2023On-the-Automorphis}
	Roberto D{\'\i}az and Alvaro Liendo, \emph{{On the Automorphism Group of
			Non-Necessarily Normal Affine Toric Varieties}}, Int. Math. Res. Not. IMRN
	(2023).
	
	\bibitem[Fil82]{Fi1982Isomorphisms-betwe}
	Richard~Patrick Filipkiewicz, \emph{Isomorphisms between diffeomorphism
		groups}, Ergodic Theory Dynamical Systems \textbf{2} (1982), no.~2, 159--171
	(1983). \MR{693972}
	
	\bibitem[FK18]{FuKr2018On-the-geometry-of}
	Jean-Philippe Furter and Hanspeter Kraft, \emph{On the geometry of the
		automorphism groups of affine varieties},
	\url{https://arxiv.org/abs/1809.04175}, 2018.
	
	\bibitem[Fre17]{Fr2017Algebraic-theory-o}
	Gene Freudenburg, \emph{Algebraic theory of locally nilpotent derivations},
	second ed., Encyclopaedia of Mathematical Sciences, vol. 136,
	Springer-Verlag, Berlin, 2017, Invariant Theory and Algebraic Transformation
	Groups, VII. \MR{3700208}
	
	\bibitem[Ful93]{Fu1993Introduction-to-to}
	William Fulton, \emph{Introduction to toric varieties}, Annals of Mathematics
	Studies, vol. 131, Princeton University Press, Princeton, NJ, 1993, The
	William H. Roever Lectures in Geometry. \MR{1234037}
	
	\bibitem[GP93]{GrPf1993Geometric-Quotient}
	Gert-Martin Greuel and Gerhard Pfister, \emph{{Geometric Quotients of Unipotent
			Group Actions}}, Proceedings of the London Mathematical Society
	\textbf{s3-67} (1993), no.~1, 75--105.
	
	\bibitem[GW20]{GoWe2020Algebraic-geometry}
	Ulrich G{\"o}rtz and Torsten Wedhorn, \emph{Algebraic geometry {I}. {Schemes}.
		{With} examples and exercises}, 2nd edition ed., Springer Stud. Math. --
	Master, Wiesbaden: Springer Spektrum, 2020 (English).
	
	\bibitem[Hum75]{Hu1975Linear-algebraic-g}
	James~E. Humphreys, \emph{Linear algebraic groups}, Springer-Verlag, New
	York-Heidelberg, 1975, Graduate Texts in Mathematics, No. 21.
	
	\bibitem[Kno91]{Kn1991The-Luna-Vust-theo}
	Friedrich Knop, \emph{The {L}una-{V}ust theory of spherical embeddings},
	Proceedings of the {H}yderabad {C}onference on {A}lgebraic {G}roups
	({H}yderabad, 1989), Manoj Prakashan, Madras, 1991, pp.~225--249.
	\MR{1131314}
	
	\bibitem[Kra17]{Kr2017Automorphism-group}
	Hanspeter Kraft, \emph{Automorphism groups of affine varieties and a
		characterization of affine {$n$}-space}, Trans. Moscow Math. Soc. \textbf{78}
	(2017), 171--186. \MR{3738084}
	
	\bibitem[KRvS21]{KrReSa2021Is-the-affine-spac}
	Hanspeter Kraft, Andriy Regeta, and Immanuel van Santen, \emph{Is the affine
		space determined by its automorphism group?}, Int. Math. Res. Not. IMRN
	(2021), no.~6, 4280--4300. \MR{4230395}
	
	\bibitem[Lie10]{Li2010Affine-Bbb-T-varie}
	Alvaro Liendo, \emph{Affine {$\Bbb T$}-varieties of complexity one and locally
		nilpotent derivations}, Transform. Groups \textbf{15} (2010), no.~2,
	389--425. \MR{2657447}
	
	\bibitem[Los09]{Lo2009Proof-of-the-Knop-}
	Ivan~V. Losev, \emph{Proof of the {K}nop conjecture}, Ann. Inst. Fourier
	(Grenoble) \textbf{59} (2009), no.~3, 1105--1134. \MR{2543664}
	
	\bibitem[LR22]{LeRe2022Vector-fields-and-}
	Matthias Leuenberger and Andriy Regeta, \emph{Vector fields and automorphism
		groups of {D}anielewski surfaces}, Int. Math. Res. Not. IMRN (2022), no.~6,
	4720--4752. \MR{4391900}
	
	\bibitem[LRU22]{LiReUr2022On-the-Characteriz}
	Alvaro Liendo, Andriy Regeta, and Christian Urech, \emph{On the
		characterization of {D}anielewski surfaces by their automorphism groups},
	Transform. Groups \textbf{27} (2022), no.~1, 181--187. \MR{4400720}
	
	\bibitem[LRU23]{LiReUr2023Characterization-o}
	\bysame, \emph{Characterization of affine surfaces with a torus action by their
		automorphism groups}, Ann. Sc. Norm. Super. Pisa Cl. Sci. (5) \textbf{24}
	(2023), no.~1, 249--289. \MR{4587746}
	
	\bibitem[Reg22]{Re2022Characterization-o}
	Andriy Regeta, \emph{Characterization of affine $\mathbb{G}_m$-surfaces of
		hyperbolic type}, \url{https://arxiv.org/abs/2202.10761}, 2022.
	
	\bibitem[RvS21]{ReSa2021Characterizing-smo}
	Andriy Regeta and Immanuel van Santen, \emph{Characterizing smooth affine
		spherical varieties via the automorphism group}, Journal de
	l{\textquoteright}\'Ecole polytechnique {\textemdash} Math\'ematiques
	\textbf{8} (2021), 379--414 (en). \MR{4218162}
	
	\bibitem[Ryb95]{Ry1995Isomorphisms-betwe}
	Tomasz Rybicki, \emph{Isomorphisms between groups of diffeomorphisms}, Proc.
	Amer. Math. Soc. \textbf{123} (1995), no.~1, 303--310. \MR{1233982}
	
	\bibitem[Ryb02]{Ry2002Isomorphisms-betwe}
	\bysame, \emph{Isomorphisms between groups of homeomorphisms}, Geom. Dedicata
	\textbf{93} (2002), 71--76. \MR{1934687}
	
	\bibitem[Ser03]{Se2003Permutation-group-}
	\'{A}kos Seress, \emph{Permutation group algorithms}, Cambridge Tracts in
	Mathematics, vol. 152, Cambridge University Press, Cambridge, 2003.
	\MR{1970241}
	
	\bibitem[Sha66]{Sh1966On-some-infinite-d}
	I.~R. Shafarevich, \emph{On some infinite-dimensional groups}, Rend. Mat. e
	Appl. (5) \textbf{25} (1966), no.~1-2, 208--212. \MR{0485898}
	
	\bibitem[Whi63]{Wh1963On-isomorphic-grou}
	James~V. Whittaker, \emph{On isomorphic groups and homeomorphic spaces}, Ann.
	of Math. (2) \textbf{78} (1963), 74--91. \MR{0150750}
	
	\bibitem[Zim84]{Zi1984Ergodic-theory-and}
	Robert~J. Zimmer, \emph{Ergodic theory and semisimple groups}, Monographs in
	Mathematics, vol.~81, Birkh\"{a}user Verlag, Basel, 1984. \MR{776417}
	
\end{thebibliography}
\end{document}